\documentclass{amsart}
\usepackage{amsmath}
\usepackage{amssymb}
\usepackage[all]{xy}
\usepackage{graphicx}
\usepackage{bbm}
\usepackage{commath}
\usepackage[numbers]{natbib}
\usepackage{caption}
\usepackage{appendix}
\usepackage{enumerate}
\usepackage{amsmath,leftidx}
\usepackage{constants}
\usepackage[margin=3cm]{geometry}
\usepackage[margin=3cm]{geometry}
\usepackage{xcolor}
\usepackage{caption}

\setcounter{tocdepth}{2}

\numberwithin{equation}{section}

\usepackage{enumitem}

\usepackage{url}
\usepackage{longtable,tabu}
\usepackage{float}

\usepackage{xspace,amssymb,amsfonts,euscript}
\usepackage{amsthm,amsmath}
\usepackage{palatino}
\usepackage{euscript}
\input xy \xyoption {all}

\RequirePackage{ifpdf}
\ifpdf
  \IfFileExists{pdfsync.sty}{\RequirePackage{pdfsync}}{}
  \RequirePackage[pdftex,
   colorlinks = true,
   pagebackref,
%   pdfpagemode=None,
   bookmarksopen=true]{hyperref}
\else
  \RequirePackage[hypertex]{hyperref}
\fi

\RequirePackage{ae, aecompl, aeguill} 

\newtheorem{thm}{Theorem}[section]
\newtheorem{lem}[thm]{Lemma}
\newtheorem{prop}[thm]{Proposition}

\renewcommand{\tilde}{\widetilde}

\theoremstyle{remark}
\newtheorem{remark}[thm]{Remark}

\theoremstyle{definition}
\newtheorem{definition}[thm]{Definition}

\makeatletter
\newcommand\leftdash{\!\rotatebox[origin=c]{-60}{$\dabar@\dabar@\dabar@$}\!}
\newcommand\rightdash{\!\rotatebox[origin=c]{60}{$\dabar@\dabar@\dabar@$}\!}
\makeatother

\makeatletter
\@date
\let\@@pmod\pmod
\DeclareRobustCommand{\pmod}{\@ifstar\@pmods\@@pmod}
\def\@pmods#1{\mkern4mu({\operator@font mod}\mkern 6mu#1)}
\makeatother

\newcommand{\Q}{\mathbb{Q}}

\newcommand{\Z}{\mathbb{Z}}

\newcommand{\one}{\mathbf{1}}

\newcommand{\re}{\mathrm{Re}}

\newcommand{\im}{\mathrm{Im}}
\renewcommand{\Im}{\mathrm{Im}}
\newcommand{\GL}{\mathrm{GL}}

\newconstantfamily{abcon}{symbol=c}

\DeclareMathOperator{\Sym}{\textup{Sym}}

\DeclareMathOperator{\Li}{\textup{Li}}

\newcommand{\abold}{\mathfrak{a}}

\newcommand{\mbold}{\mathfrak{m}}
\newcommand{\nbold}{\mathfrak{n}}

\hypersetup{
  colorlinks,
  citecolor=blue,
  linkcolor=red,
  urlcolor=blue}

\usepackage{enumerate}

\graphicspath{ {./images/} }

\title[]{Effective Sato--Tate distributions for Surfaces Arising from Products of Elliptic Curves}
\author{Quanlin Chen}
\address{Department of Mathematics, Stanford University}
\email{quanlinc@stanford.edu}

\author{Eric Shen}
\address{Department of Mathematics, Harvard University}
\email{ericshen1@college.harvard.edu}

\begin{document}

\maketitle

\begin{abstract}
We prove, with an unconditional effective error bound, the Sato--Tate distributions for two families of surfaces arising from products of elliptic curves, namely a one-parameter family of K3 surfaces and double quadric surfaces. To prove these effective Sato-Tate distributions, we prove an effective form of the joint Sato--Tate distribution for two twist-inequivalent elliptic curves, along with an effective form of the Sato--Tate distribution for an elliptic curve for primes in arithmetic progressions. The former completes the previous work \cite{thorner2021effective} of Thorner by including the cases in which one of the elliptic curves has CM. 
\end{abstract}

\section{Introduction and Statement of Result}\label{sintro}

Let $E/\Q$ be an elliptic curve over $\Q$ of conductor $N_E$ without complex multiplication (CM).  Hasse proved that for each prime $p$, the group $E(\mathbb{F}_p)$ of $\mathbb{F}_p$-rational points on the reduction of $E$ modulo $p$ satisfies the bound
\[
|p+1-\# E(\mathbb{F}_p)|<2\sqrt{p}.
\]
For $p\nmid N_E$, we define $a_E(p):=p+1-\#E(\mathbb{F}_p)$.  Hasse's bound implies that there exists $\theta_E(p)\in[0,\pi]$ such that
\[
a_E^*(p):=\frac{a_E(p)}{\sqrt{p}}=2\cos\theta_{E}(p). 
\]
In 2011, Barnet-Lamb, Geraghty, Harris, and Taylor \citep{barnet2011family} proved the celebrated Sato--Tate conjecture, which states that for a fixed subinterval $[a,b]\subseteq [-2,2]$, we have that
\[
\lim_{x\to \infty}\frac{\# \left\{ p \leq x: a_{E}^{*}(p) \in [a,b]\right\}}{\# \{p \leq x\}} = \frac{1}{\pi} \int_{a}^{b} \sqrt{1 - \Big ( \frac{t}{2} \Big )^2} dt.
\]
Thorner \citep{thorner2021effective} quantified the rate of convergence with effective dependence on $E$.  In particular,  for every subinterval $[a,b]\subset [-2,2]$, we have that\footnote{In this paper, all implied constants are positive, absolute, and effectively computable.  The sequence $c_1,c_2,c_3,\ldots$ denotes a sequence of certain positive, absolute, and effectively computable constants.}
\[
\bigg|\frac{\# \left\{ p \leq x: a_{E}^{*}(p) \in [a,b]\right\}}{\# \{p \leq x\}} - \frac{1}{\pi} \int_{a}^{b} \sqrt{1 - \Big ( \frac{t}{2} \Big )^2} dt\bigg|\ll \frac{\log(N_E\log x)}{\sqrt{\log x}},\qquad x\geq 3.
\]
The proof crucially relies on the work of Newton and Thorne \citep{newton2021symmetric,thorne2021symmetric}, proving that for all integers $m\geq 1$, the $m$-th symmetric power $L$-function $L(s,\mathrm{Sym}^m E)$ is the $L$-function of a unitary cuspidal automorphic representation of $\mathrm{GL}_{m+1}$.  If for each $m\geq1$ the generalized Riemann hypothesis is known for $L(s,\mathrm{Sym}^m E)$, then one can prove a more rapid rate of convergence.

For a closed subinterval $I\subseteq[0,\pi]$, a change of variables yields the equivalent statement
\[
\bigg|\frac{\# \left\{ p \leq x: \theta_{E}(p) \in I\right\}}{\# \{p \leq x\}} - \frac{2}{\pi} \int_{I} (\sin \theta)^2 d\theta\bigg|\le \Cl[abcon]{satotatestatement}\frac{\log(N_E\log x)}{\sqrt{\log x}},\qquad x\geq 3.
\]
Note that $\frac{2}{\pi}(\sin\theta)^2 d\theta$ is the pushforward of the Haar measure on $\text{SU}_2(\mathbb{C})$ under the trace map.  This observation leads to a natural generalization of the Sato--Tate conjecture to other abelian varieties $A/\mathbb{Q}$.  One might hope to find a suitable topological group $\mathrm{ST}(A)$ (the Sato--Tate group of $A$) such that the sequence $(x_p)$ of conjugacy classes of normalized images of Frobenius elements in $\mathrm{ST}(A)$ at primes $p$ of good reduction are equidistributed with respect to the pushforward of the Haar measure on $\mathrm{ST}(A)$ to its space of conjuacy classes.  See Sutherland \cite[Section 3]{sutherland2013sato} for further discussion, including a recipe for how one might construct a good candidate for $\mathrm{ST}(A)$.

In this paper, we provide the first unconditional effective bound of the error on the Sato--Tate distribution for two families of surfaces that arise from products of elliptic curves.
The first family of surfaces we consider is the one-parameter family of K3 surfaces first described in \citep{ahlgren2002zeta}, defined by
\begin{align*}
X_\lambda : s^2 = xy(x + 1)(y + 1)(x + \lambda y),
\end{align*}
where $\lambda\in \mathbb{Q} - \{0, -1\}$. It is not immediately evident that $X_\lambda$ is naturally related to a product of elliptic curves, but as detailed in \citep{ahlgren2002zeta}, the zeta function of $X_\lambda$ is related to the symmetric square of the zeta function of the Clausen elliptic curve $E_\lambda^{\text{Cl}}$, which is given by 
\begin{equation}
\label{eqn:Clausen_def}
    E_\lambda^{\text{Cl}}: y^2=(x-1)(x^2+\lambda).
\end{equation}

To more explicitly state the relation between the Clausen elliptic curve $E_\lambda^\text{Cl}$ and $X_\lambda$, we will introduce some notation. For a prime $p$, let $\phi_p$ be the unique quadratic character modulo $p$. For any two Dirichlet characters $\chi,\chi'$ over $(\mathbb{Z}/p\mathbb{Z})^{\times}$, define the normalized Jacobi sum by
$$
\binom{\chi}{\chi'}:=\frac{\chi'(-1)}{p}\sum_{x\in\mathbb{F}_p}\chi(x)\overline{\chi'}(1-x).
$$
Define 
$$
_3F_2(\lambda, p):=\frac{p}{p-1}\sum_{\chi (p)}\binom{\phi_{p}\chi}{\chi}^3\chi(\lambda),
$$
where the summation is taken over all Dirichlet characters modulo $p$. Note that this is a specific evaluation of the more general Gaussian hypergeometric function over finite fields, first defined by Greene in \cite{greene1984hypergeometric}, \cite{greene1987hypergeometric2}. Now, the Clausen elliptic curves $E_\lambda^{\textup{Cl}}$ and K3 surfaces $X_\lambda$ are related by
\begin{align}\label{k3eq1}
    |X_\lambda(\mathbb{F}_p)| = 1 + p^2 + 19p + p^2\leftidx{_3}{F}_2(-\lambda, p)
\end{align} and
\begin{align}\label{k3eq2}
    p + p^2\phi_p(\lambda + 1)_3F_2 \left ( \frac{\lambda}{\lambda + 1}, p \right ) = a_\lambda^\text{Cl}(p)^2,
\end{align}
where $a_\lambda^{\text{Cl}}(p)$ is the trace of Frobenius of  $E_\lambda^\text{Cl}$  \cite[Theorem 11.18]{ono2004web}, \cite[Proposition 4.1]{ahlgren2002zeta}. Now, the normalized trace of Frobenius of $X_\lambda$ is
$$
a_{X_\lambda}^{*}(p)=p\leftidx{_3}{F}_2(-\lambda, p).
$$

The limiting distribution of $a_{X_\lambda}^{*}(p)$ varied over all $\lambda\in \mathbb{F}_p$ as $p\to \infty$ was studied by Ono, Saad, and Saikia in \citep{ono2021distribution} and \citep{saad2023distribution}. Let $B(x)$ be the ``Batman'' distribution defined over $[-3,3]$ by \begin{align*}
    B(x) &= \begin{cases}
                \frac{3 + x}{\sqrt{3 - 2x - x^2}} + \frac{3 - x}{\sqrt{3 + 2x - x^2}} & |x| < 1,\\
                \frac{3 - |x|}{\sqrt{3 + 2|x| - x^2}} & 1 \leq |x| \leq 3, \\
                0 & \text{otherwise.}
                \end{cases}
\end{align*} In \cite[Corollary 1.4]{saad2023distribution}, Saad proved that for any prime $p \geq 5$ and sub-interval $[a,b]\subset[-3,3]$, 
$$
\bigg|\frac{\#\{ \lambda\in\mathbb{F}_p: a_{X_\lambda}^{*}(p)\in [a,b]\}}{p}-\frac{1}{4\pi}\int_{a}^{b}B(t)dt\bigg| \leq \frac{110.84}{p^{1/4}}.
$$

In this paper, we prove the Sato--Tate distribution of $a_{X_\lambda}^{*}(p)$ for a fixed K3 surface $X_\lambda$ when $p$ varies, with an effective error bound. In the generic case that $\lambda + 1$ is not a rational square and $\lambda \not \in \{1/8, 1, -1/4, -1/64, -4, -64\}$, we recover the ``Batman'' distribution in \citep{ono2021distribution}. 

\begin{thm}\label{k3case1}

Let $\lambda\in\mathbb{Q}$ satisfy
\begin{align*}
\lambda\notin\{r\in\mathbb{Q}\colon \sqrt{r+1}\in\mathbb{Q}\}\cup\{-64,-4,-\tfrac{1}{4},-\tfrac{1}{64},\tfrac{1}{8},1\}.
\end{align*}
Let $\lambda_1,\lambda_2\in\mathbb{Z}$ satisfy $\gcd(\lambda_1,\lambda_2)=1$ and $\lambda +1 = \frac{\lambda_1}{\lambda_2}$.  Let $q_{\lambda}$ be the squarefree part of $\lambda_1 \lambda_2$.  Let $N_{\lambda}$ be the conductor of the Clausen elliptic curve $E_{-\lambda/(\lambda+1)}^{\mathrm{Cl}}$ defined by \eqref{eqn:Clausen_def}.  Then there exists an absolute constant $\Cl[abcon]{11}$ such that if $x\geq 3$ and $[a,b]\subseteq [-3,3]$, then
$$
\Big|\frac{\# \{ p \leq x: a_{X_\lambda}^{*}(p) \in [a,b]\}}{\# \{p \leq x\}} - \int_{a}^{b} B(t) dt\Big| \ll x^{-\Cr{11}/\sqrt{q_{\lambda}}}+\frac{\log(N_{\lambda} q_{\lambda}\log x)}{\sqrt{\log x}}.
$$
\end{thm}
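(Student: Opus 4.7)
\emph{Proof sketch.} The plan is to reduce the claim to the effective Sato--Tate distribution for the Clausen elliptic curve $E := E^{\mathrm{Cl}}_{-\lambda/(\lambda+1)}$, restricted to the arithmetic progression cut out by a Kronecker symbol associated to $q_\lambda$. First, in \eqref{k3eq2} substitute $\lambda \mapsto \mu := -\lambda/(\lambda+1)$: a direct computation gives $\mu/(\mu+1) = -\lambda$ and $\mu + 1 = 1/(\lambda+1)$, and since $\phi_p$ is quadratic one has $\phi_p(1/(\lambda+1)) = \phi_p(\lambda+1)$. Combined with $a^*_{X_\lambda}(p) = p\,{}_3F_2(-\lambda,p)$, this turns \eqref{k3eq2} into the key algebraic identity
\[
a^*_{X_\lambda}(p) \;=\; \phi_p(\lambda+1)\!\left(\frac{a^{\mathrm{Cl}}_{-\lambda/(\lambda+1)}(p)^2}{p}-1\right) \;=\; \phi_p(\lambda+1)\bigl(4\cos^2\theta_E(p)-1\bigr),
\]
where $\theta_E(p) \in [0,\pi]$ is the normalized Frobenius angle of $E$.

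Under the hypothesis, the six excluded values of $\lambda$ are exactly those for which the Clausen curve $E$ has complex multiplication, so $E$ is non-CM of conductor $N_\lambda$; and the condition $\sqrt{\lambda+1}\notin\Q$ forces $p\mapsto \phi_p(\lambda+1)$ to coincide with a nontrivial primitive real Dirichlet character $\chi_\lambda$ of conductor $O(q_\lambda)$ (the Kronecker symbol of $q_\lambda$ after clearing denominators). Splitting the event $\{a^*_{X_\lambda}(p)\in[a,b]\}$ according to the sign $\chi_\lambda(p) = \epsilon\in\{\pm 1\}$ then reduces the theorem to summing over $\epsilon$ the counts $\#\{p\le x:\chi_\lambda(p)=\epsilon,\ 4\cos^2\theta_E(p)-1\in\epsilon\cdot[a,b]\}$. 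Each such count is handled by the effective Sato--Tate in arithmetic progressions established earlier in the paper, which yields, for $\epsilon\in\{\pm 1\}$ and $J\subseteq [0,\pi]$,
\[
\frac{\#\{p\le x:\chi_\lambda(p)=\epsilon,\,\theta_E(p)\in J\}}{\#\{p\le x\}} \;=\; \frac{1}{2}\cdot\frac{2}{\pi}\int_J \sin^2\theta\,d\theta \;+\; O\!\left(x^{-c/\sqrt{q_\lambda}}+\frac{\log(N_\lambda q_\lambda\log x)}{\sqrt{\log x}}\right).
\]

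To match the main term with the Batman density $B$, I perform the change of variables $t=4\cos^2\theta - 1$ in the Sato--Tate measure: the map is two-to-one from $[0,\pi]$ onto $[-1,3]$ and pushes $(2/\pi)\sin^2\theta\,d\theta$ forward to the density $\sqrt{3-t}/(2\pi\sqrt{t+1})\,dt$. The $\epsilon=+1$ branch contributes this density on $[-1,3]\cap[a,b]$, while the $\epsilon=-1$ branch contributes its reflection $\sqrt{3+t}/(2\pi\sqrt{1-t})$ on $[-3,1]\cap[a,b]$. Using the factorizations $3-2t-t^2=(3+t)(1-t)$ and $3+2t-t^2=(3-t)(1+t)$, these two densities sum on the central region $|t|<1$ to the symmetric pair of square-root terms appearing in $B$, while on the outer regions $1\le |t|\le 3$ only one branch contributes and equals $B(t)/(4\pi)$ directly, producing precisely the claimed main term.

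The hard part, packaged as the headline input, is the effective joint Sato--Tate for $(\chi_\lambda(p),\theta_E(p))$ above. Its proof requires the analytic properties of the twisted symmetric power $L$-functions $L(s,\mathrm{Sym}^m E\otimes\chi_\lambda)$: automorphy and cuspidality (Newton--Thorne combined with a standard twisting argument, which is what allows one of the two curves in the joint Sato--Tate to be CM, as advertised in the abstract), Rankin--Selberg control of coefficients, and an unconditional zero-free region uniform in $m$. The factor $x^{-c/\sqrt{q_\lambda}}$ traces back to Siegel's ineffective bound on a possible exceptional real zero of $L(s,\chi_\lambda)$, while the $q_\lambda$ in the second error term comes from the analytic conductor of the twisted $L$-functions, which is polynomial in $q_\lambda$. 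Given these inputs, the remaining work is the routine bookkeeping through the sign decomposition and the change of variables above.
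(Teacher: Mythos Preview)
Your proposal is correct and follows essentially the same route as the paper: both derive the identity $a^*_{X_\lambda}(p)=\phi_p(\lambda+1)(a^*_E(p)^2-1)$ from \eqref{k3eq1}--\eqref{k3eq2}, observe that the hypothesis forces $E=E^{\mathrm{Cl}}_{-\lambda/(\lambda+1)}$ to be non-CM and $p\mapsto\phi_p(\lambda+1)$ to be a nontrivial real character of modulus $O(q_\lambda)$, split on the sign $\epsilon=\pm1$, and then feed each branch into Theorem~\ref{master}(2.a). Your change-of-variables computation recovering the Batman density is more explicit than the paper's ``simple calculation,'' but the argument is the same; the only loose end is your parenthetical about CM curves in the joint Sato--Tate---here you are invoking part~(2.a), the non-CM arithmetic-progression case, not the joint statement~(1).
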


\begin{thm} \label{k3case2}
Let $\lambda\in\{r\in\mathbb{Q}\colon \sqrt{r+1}\in\mathbb{Q}\}-\{0,-1,8\}$, and let $N_{\lambda}$ be the conductor of the Clausen elliptic curve $E_{-\lambda/(\lambda+1)}^{\mathrm{Cl}}$ defined by \eqref{eqn:Clausen_def}. If $x\geq 3$ and $[a,b]\subseteq[-3,3]$, then
$$
\Big|\frac{\# \{ p \leq x: a_{X_\lambda}^{*}(p) \in [a,b]\}}{\# \{p \leq x\}} - \int_{a}^{b} \frac{1}{2\pi}\sqrt{\frac{3-t}{1+t}} dt\Big| \ll \frac{\log(N_{\lambda} \log x)}{\sqrt{\log x}}.
$$
\end{thm}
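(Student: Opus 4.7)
The plan is to reduce Theorem \ref{k3case2} to the effective Sato--Tate distribution for the single Clausen elliptic curve $E := E^{\mathrm{Cl}}_{-\lambda/(\lambda+1)}$ via equation \eqref{k3eq2}. Set $\mu = -\lambda/(\lambda+1)$; a direct manipulation gives $\mu/(\mu+1) = -\lambda$ and $\mu+1 = 1/(\lambda+1)$, so specializing \eqref{k3eq2} at $\mu$ yields
\[
p + p^{2}\,\phi_{p}\!\left(1/(\lambda+1)\right)\cdot {}_3F_2(-\lambda, p) = a_E(p)^{2}.
\]
By hypothesis $\sqrt{\lambda+1}\in\mathbb{Q}$, so $\phi_p(\lambda+1) = 1$ for every prime $p$ outside the finite set $S_\lambda$ of primes dividing the numerator or denominator of $\lambda+1$. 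For such $p$ one obtains the clean identity
\[
a^{*}_{X_\lambda}(p) = a_E^{*}(p)^{2} - 1 = 4\cos^{2}\theta_E(p) - 1 = 2\cos(2\theta_E(p)) + 1,
\]
so the problem of counting primes with $a^{*}_{X_\lambda}(p)\in[a,b]$ becomes a Sato--Tate interval problem for the single non-CM elliptic curve $E$.

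Next I would translate the condition $a^{*}_{X_\lambda}(p)\in [a,b]$ into a Sato--Tate condition on $\theta_E(p)$. The map $\theta\mapsto 2\cos(2\theta)+1$ sends $[0,\pi]$ onto $[-1,3]$, is symmetric about $\pi/2$, and is two-to-one on the interior, so the preimage $I(a,b)\subseteq[0,\pi]$ of $[a,b]\cap[-1,3]$ is a symmetric union of at most two closed intervals. Applying the effective Sato--Tate bound quoted in the introduction for $E$ to the set $I(a,b)$ yields
\[
\bigg|\frac{\#\{p\le x:\theta_E(p)\in I(a,b)\}}{\#\{p\le x\}} - \frac{2}{\pi}\int_{I(a,b)}\sin^{2}\theta\,d\theta\bigg| \ll \frac{\log(N_\lambda\log x)}{\sqrt{\log x}},
\]
and the $O(1)$ primes in $S_\lambda$ contribute an error of size $O(1/\log x)$, which is absorbed.

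The last step is a change-of-variables computation: writing $t = 2\cos(2\theta)+1$ and pushing the Sato--Tate density $\tfrac{2}{\pi}\sin^{2}\theta\,d\theta$ through the two-to-one map from $[0,\pi]$ to $[-1,3]$, one obtains the density $\tfrac{1}{2\pi}\sqrt{(3-t)/(1+t)}\,dt$ on $[-1,3]$, so
\[
\frac{2}{\pi}\int_{I(a,b)}\sin^{2}\theta\,d\theta = \int_{a}^{b}\frac{1}{2\pi}\sqrt{\frac{3-t}{1+t}}\,dt,
\]
with the convention that the integrand vanishes on $[-3,-1)$. Combining with the previous bound completes the proof.

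The algebraic reduction and the change of variables are routine; the main analytic content is entirely borrowed from the cited effective Sato--Tate theorem. The only genuine point requiring care is verifying that $E^{\mathrm{Cl}}_{-\lambda/(\lambda+1)}$ is non-CM (so that Thorner's non-CM bound applies as stated) for every $\lambda$ in the admissible set. This should reduce to a finite check on $j$-invariants, with the excluded values $\{0,-1,8\}$ precisely accounting for the degenerate or CM specializations of the Clausen family with $\lambda+1$ a rational square.
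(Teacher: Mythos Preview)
Your proposal is correct and follows essentially the same route as the paper: use \eqref{k3eq1}--\eqref{k3eq2} together with $\phi_p(\lambda+1)=1$ (since $\lambda+1$ is a rational square) to get $a^{*}_{X_\lambda}(p)=a_E^{*}(p)^2-1$, then apply Thorner's effective Sato--Tate for the non-CM curve $E=E^{\mathrm{Cl}}_{-\lambda/(\lambda+1)}$ and push forward the measure by $t=2\cos(2\theta)+1$. The one point you flag as needing care---that $E$ is non-CM for every admissible $\lambda$---is handled in the paper by the citation to \cite[Page 191]{ono2004web}: the CM specializations of the Clausen family correspond exactly to $\lambda\in\{8,1/8,1,-4,-1/4,-64,-1/64\}$, and of these only $\lambda=8$ has $\lambda+1$ a rational square, which is already excluded.
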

\begin{remark}
    The ``Batman'' distribution and the distribution given by $\frac{1}{2\pi}\sqrt{\frac{3-t}{1+t}}$ are the pushforwards of the Haar measures on the corresponding Sato-Tate groups $\textup{O}_3(\mathbb{R})$ and $\textup{SO}_3(\mathbb{R})$ under the trace map.
\end{remark}
\begin{remark}\label{rmkk3}
The corresponding results for all other $\lambda\in\mathbb{Q}-\{0,-1\}$ are given in Theorem \ref{k3main}.
\end{remark}

Now, consider any two twist-inequivalent elliptic curves
\[
E: y^2z=x^3+\Lambda_1xz^2+\Lambda_2z^3
\]
and
\[
E': y^2z'=x'^3+\Lambda_3x'z'^2+\Lambda_4z'^3
\]
over $\mathbb{Q}$. The second family of surfaces that we consider are the  \emph{double quadric surfaces}
\[
\mathcal{Z}=\mathcal{Z}(E,E'): y^2=zz'(x^3+\Lambda_1xz^2+\Lambda_2z^3)(x'^3+\Lambda_3x'z'^2+\Lambda_4z'^3).
\]
The normalized trace of Frobenius for $\mathcal{Z}$ is
\[
a_{\mathcal{Z}}^*(p)=\frac{1}{p}\sum_{x,x'}\left(\frac{x^3+\Lambda_2x+\Lambda_3}{p}\right)\left(\frac{x'^3+\Lambda_4x'+\Lambda_5}{p}\right)=a_{E}^{*}(p)a_{E'}^{*}(p).
\]
Our effective form of the Sato--Tate distribution for $\mathcal{Z}$ may be stated as follows.

\begin{thm}\label{doublequadriccase1}
Let $E$ and $E'$ be two twist-inequivalent non-CM elliptic curves with conductors $N_{E}$ and $N_{E'}$, respectively. Let $C_1(t)$ be given by 
$$
        C_1(t)=\frac{2}{\pi^2}\int_{|t|/2}^{2}\frac{1}{u}\sqrt{\bigg(1-\Big(\frac{u}{2}\Big)^2\bigg)\left(1-\Big(\frac{|t|}{2u}\Big)^2\right)}du.
$$ 
If $x \geq 16$ and $[a, b] \subset [-4, 4]$, then
$$
        \Big|\frac{\# \left\{ p \leq x: a_{\mathcal{Z}}^*(p) \in [a,b]\right\}}{\# \{p \leq x\}} - \int_a^b C_1(t) dt \Big| \ll {\frac{\sqrt{\log(N_{E}N_{E'}\log\log x)}}{\sqrt[4]{\log\log x}}}.
$$
\end{thm}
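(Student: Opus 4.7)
The key identity $a_{\mathcal{Z}}^*(p) = a_E^*(p)\,a_{E'}^*(p) = 4\cos\theta_E(p)\cos\theta_{E'}(p)$ reduces the task to describing the distribution of a product of normalized Frobenius traces. The main input is the effective joint Sato--Tate distribution for the pair $(\theta_E(p),\theta_{E'}(p))$ at twist-inequivalent non-CM elliptic curves, which the paper establishes earlier (completing Thorner's work). An elementary change of variables confirms that the pushforward of the product Sato--Tate measure $(4/\pi^2)\sin^2\theta\sin^2\theta'\,d\theta\,d\theta'$ on $[0,\pi]^2$ under $F\colon(\theta,\theta')\mapsto 4\cos\theta\cos\theta'$ equals exactly $C_1(t)\,dt$ on $[-4,4]$.

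To obtain the effective bound I would use a grid approximation. Partition $[0,\pi]^2$ into axis-aligned rectangles $R$ of side length $\delta$, and classify each $R$ as lying entirely inside the preimage $F^{-1}([a,b])$, entirely outside it, or crossing its boundary. To each inside/outside rectangle apply the effective joint Sato--Tate distribution to estimate $\#\{p\le x : (\theta_E(p),\theta_{E'}(p)) \in R\}$; the boundary-crossing contribution is controlled by the Lebesgue measure of the $O(\delta)$-neighborhood of the boundary hyperbolas of $F^{-1}([a,b])$, which is $O(\delta\log(1/\delta))$ because of the asymptotes at $t = 0$. Summing over the $O(\delta^{-2})$ rectangles, the discrepancy decomposes into a discretization error $O(\delta\log(1/\delta))$ and an aggregate joint-discrepancy error $O(\delta^{-2}\eta(x))$, where $\eta(x)$ denotes the per-rectangle error from the joint Sato--Tate theorem.

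The principal obstacle is optimizing the parameters to match the stated rate $\sqrt{\log(N_EN_{E'}\log\log x)}/(\log\log x)^{1/4}$. A naive three-way balance $\delta^3\asymp\eta(x)$ delivers only $\eta(x)^{1/3}$, corresponding to a $(\log x)^{-1/6+o(1)}$ bound. Achieving the sharper rate likely requires replacing the indicator $\mathbf{1}_{[a,b]}$ by Beurling--Selberg extremal majorants and minorants in the $t$ variable before pulling back to $(\theta,\theta')$, expanding these extremal functions in the bi-Chebyshev basis $U_m(\cos\theta)U_n(\cos\theta')$, and invoking effective prime-sum bounds for the Rankin--Selberg $L$-functions $L(s,\mathrm{Sym}^m E \otimes \mathrm{Sym}^n E')$ term by term. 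Cutting the expansion at degree $M$, the total error splits as $O(1/M) + O(M^2\,\eta_{\text{eff}}(x;M))$, where the effective error for each Rankin--Selberg factor depends on $M$, $N_E$, $N_{E'}$; taking $M$ polynomial in $\log\log x$ (forced by the degradation of effective zero-free regions as $M$ grows) would produce the $(\log\log x)^{-1/4}$ dependence.

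The delicate part is the uniformity of the effective constants across $m,n\le M$, and the careful bookkeeping of the conductor dependence inside the square root. Once this is arranged, the proof closes by substituting the optimized $M$ and observing that the $O(1/M)$ approximation error and the accumulated $L$-function error are of the same order, yielding the claimed bound for every subinterval $[a,b]\subseteq[-4,4]$ uniformly.
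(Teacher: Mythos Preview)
Your grid approximation is exactly the paper's strategy, and you correctly identify the ingredients (the effective joint Sato--Tate theorem for rectangles, and the identification of $C_1(t)\,dt$ as the pushforward of the product semicircle measure). But you miss one elementary trick, and because of that you abandon the grid approach too early.

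You count $O(\delta^{-2})$ applications of the joint theorem, one per cell. The paper instead observes that the union $\mathcal{S}$ of the interior cells, sliced by the $L-1$ vertical grid lines, consists of at most $L$ \emph{vertical strips}, each of the form $I\times I'$ with $I,I'$ intervals. The joint Sato--Tate theorem applies directly to each such strip with a single error $\eta(x)$, so the interior costs only $O(L\,\eta(x))$, not $O(L^2\,\eta(x))$. The boundary region $\mathcal{T}$ meets only $O(L)$ cells (the curves $xy=a$ and $xy=b$ are monotone on each quadrant), and each of those cells carries mass $O(L^{-2})$ under the product semicircle measure plus an $\eta(x)$ discrepancy; summing gives $O(L^{-1}+L\,\eta(x))$ again. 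Balancing $L^{-1}\asymp L\,\eta(x)$ yields $L\asymp\eta(x)^{-1/2}$ and total error $\eta(x)^{1/2}$, which is precisely the stated $\sqrt{\log(N_EN_{E'}\log\log x)}/(\log\log x)^{1/4}$.

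So the second half of your proposal---Beurling--Selberg majorants in the $t$-variable, bi-Chebyshev expansion, and direct Rankin--Selberg bounds term by term---is unnecessary. That machinery is already encapsulated inside the joint Sato--Tate theorem you invoke as a black box (and is responsible for the $\log\log x$ in $\eta(x)$); rerunning it at this stage would not improve the exponent. The whole gap between $\eta(x)^{1/3}$ and $\eta(x)^{1/2}$ is closed by the strip trick.
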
 

\begin{remark}\label{rmkdoublequadric}
    
    The corresponding results in the cases where $E, E'$ are possibly CM are given in Theorem \ref{doublequadric}.
    
\end{remark}

In order to prove Theorems \ref{k3case1}, \ref{k3case2}, and \ref{doublequadriccase1}, we make effective the joint Sato--Tate distribution for any pair of elliptic curves $E$ and $E'$.  While generically $E$ will not be a twist of $E'$, the effective joint Sato--Tate distribution for $E$ and $E'$ when they are twist-equivalent can be recovered by understanding the effective Sato--Tate distribution for $E$ with the primes restricted to certain arithmetic progressions.  With this reduction in mind, we state our main result as being a classification of the effective joint Sato--Tate distributions for arbitrary pairs of elliptic curves.  We will deduce Theorems \ref{k3case1}, \ref{k3case2}, and \ref{doublequadriccase1} as corollaries. 

To state our main result, we first introduce some notation. Given coprime integers $a, q \geq 1$, define
\begin{align*}
    \pi(x) &:= \# \{p \leq x\} \\
    \pi(x; q, a) &:= \# \{p \leq x : p \equiv a \bmod q\}.
\end{align*}
Now, let $E$ and $E'$ be twist-inequivalent elliptic curves of conductors $N_E$ and $N_{E'}$, respectively. Given any interval $I$ and real number $r$, let $\mathbf{1}_{I}$ be the indicator function for $I$ and let $\mathbf{1}_{r\in I}:=\mathbf{1}_I(r)$. Assuming $I,I'\subseteq[0,\pi]$, define
\[
\pi_{E,I}(x):=\sum_{p\le x}\mathbf{1}_{I}(\theta_E(p)),
\]
\[
\pi_{E,I}(x; q,a):=\sum_{\substack{p\le x\\ p\equiv a \bmod{q}}}\mathbf{1}_{I}(\theta_E(p)),
\]
\[
\pi_{E,E',I,I'}(x):=\sum_{p\le x}\mathbf{1}_{I}(\theta_E(p))\mathbf{1}_{I'}(\theta_{E'}(p)).
\]
Define
\[
\mu_{\mathrm{CM}}(I):=\frac{|I|}{2\pi}+\frac{1}{2}\mathbf{1}_{\frac{\pi}{2}\in I},\qquad \mu_{\mathrm{ST}}(I)=\frac{2}{\pi}\int_{I}(\sin\theta)^2 d\theta.
\]
Finally, if $E$ has complex multiplication (CM) over an imaginary quadratc field $K$, then let $D_K$ be the discriminant of $K$ and define
\[
\delta(K,q)=\begin{cases}
1&\mbox{if $D_K | \thinspace q$,}\\
0&\mbox{otherwise.}
\end{cases}
\]
We may now state our main result as follows.

\begin{thm}\label{master} Let $E, E'$ be two twist-inequivalent elliptic curves over $\mathbb{Q}$. Let $a, q$ be coprime positive integers, let $I = [\alpha, \beta] \subseteq [0, \pi]$, and let $I' = [\alpha', \beta'] \subseteq [0, \pi]$. Finally, if $E, E'$ both have complex multiplication over a (possibly distinct) imaginary quadratic field, denote those two fields as $K$ and $K'$, respectively.
\begin{enumerate}
    \item The following are true for all $x \geq 16$.
    \begin{enumerate}
        \item If $E$ and $E'$ are both non-CM,
        \begin{align*}
                    \Big|\pi_{E, E', I, I'}(x)- \mu_{\textup{ST}}(I)\mu_{\textup{ST}}(I')\pi(x)\Big|\ll \pi(x){\frac{\log(N_{E}N_{E'}\log\log x)}{\sqrt{\log\log x}}}.
                    \end{align*}
        \item If $E$ is CM but $E'$ is non-CM,  \begin{align*}
                    \Big|\pi_{E, E', I, I'}(x)-\mu_{\mathrm{CM}}(I)\mu_{\textup{ST}}(I')\pi(x)\Big|\ll \pi(x)\frac{\log(N_{E})^4\log(N_{E'}\log x)}{\sqrt{\log x}}.
                    \end{align*}
        \item If $E, E'$ are both CM:  
            \begin{enumerate}
                \item When the discriminants of $K$ and $K'$ are coprime, there exists an absolute constant $\Cl[abcon]{161c1}$ such that \begin{align*}
                    \Big|\pi_{E, E', I, I'}(x)-\mu_{\mathrm{CM}}(I)\mu_{\mathrm{CM}}(I')\pi(x)\Big|\ll \pi(x)\exp\Big(\frac{-\Cr{161c1}\log x}{\sqrt{\log x} +3\log N_{E}N_{E'}}\Big).
                    \end{align*}
                \item When the discriminants of $K$ and $K'$ are not coprime, there exists an absolute constant $\Cl[abcon]{161c2}$ such that \begin{align*}
            \Big|\pi_{E, E', I, I'}(x)-\frac{1}{2}\Big(\frac{|I||I'|}{\pi^2}+\mathbf{1}_{\frac{\pi}{2}\in I}\mathbf{1}_{\frac{\pi}{2}\in I'}\Big)\pi(x)\Big|\ll \pi(x)\exp\Big(\frac{-\Cr{161c2}\log x}{\sqrt{\log x}+3\log N_{E}N_{E'}}\Big).
        \end{align*}
            \end{enumerate}
    \end{enumerate}
    \item The following are true for all $x \geq 16$.
    \begin{enumerate}
        \item If $E$ is non-CM, then there exists an absolute constant $\Cl[abcon]{162b1}$ such that
        $$
        \Big|\pi_{E, I}(x; q,a)-\mu_{\textup{ST}}(I)\frac{\pi(x)}{\varphi(q)}\bigg|\ll \frac{\pi(x)}{\varphi(q)}\Big(x^{-\Cr{162b1}/\sqrt{q}}+\frac{\log(N_Eq\log x)}{\sqrt{\log x}}\bigg).
        $$
        
        \item If $E$ is CM, then there exists an absolute constant $\Cl[abcon]{162b2}$ such that
        \begin{align*}
        \Big|\pi_{E, I}(x; q,a)&-\mu_{\textup{CM}}(I)\frac{\pi(x)}{\varphi(q)}- \chi_K(a)\delta(K,q)\frac{\pi(x)}{2\varphi(q)}\Big(\frac{|I|}{\pi}-\one_{\pi/2 \in I}\Big)\Big| \\&\ll \frac{\pi(x)}{\varphi(q)}\Big(x^{-\Cr{162b1} /\sqrt{q}} + (\log x)^{9/2} \exp \Big ( \frac{-\Cr{162b2}\log x}{\sqrt{\log x} + \log(N_Eq)} \Big ) \Big).
\end{align*}
    \end{enumerate}
\end{enumerate}
\end{thm}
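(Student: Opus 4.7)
The plan is to reduce each bound to an effective prime number theorem for a suitable automorphic $L$-function. First, I would approximate the indicators $\mathbf{1}_I$ and $\mathbf{1}_{I'}$ by their Beurling--Selberg--Vaaler majorants and minorants of degree at most $M$, expanded in the Chebyshev basis $U_m(\cos\theta)=\sin((m+1)\theta)/\sin\theta$. Since $U_m(\cos\theta_E(p))=a_{\mathrm{Sym}^m E}(p)$ at primes of good reduction, this converts every joint or AP sum into a linear combination, of length $O(M^2)$, of prime sums of the form
\[
S_{m,n,\chi}(x):=\sum_{p\le x} a_{\mathrm{Sym}^m E}(p)\,a_{\mathrm{Sym}^n E'}(p)\,\chi(p),
\]
plus the constant term, which produces the expected main measure ($\mu_{\mathrm{ST}}\otimes\mu_{\mathrm{ST}}$, $\mu_{\mathrm{CM}}\otimes\mu_{\mathrm{ST}}$, and so on). The truncation parameter $M$ will be balanced against the PNT error at the end of each case.

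In Part (1)(a), Newton--Thorne supplies cuspidality of $\mathrm{Sym}^m E$ and $\mathrm{Sym}^n E'$ as automorphic representations of $\mathrm{GL}_{m+1}$ and $\mathrm{GL}_{n+1}$; twist-inequivalence of $E, E'$ then forces the Rankin--Selberg convolution $L(s,\mathrm{Sym}^m E\times \mathrm{Sym}^n E')$ to be cuspidal with no pole whenever $(m,n)\ne(0,0)$. An effective prime number theorem of de la Vall\'ee Poussin type, applied to this convolution, yields a power-saving bound for $S_{m,n,1}(x)$ with analytic conductor polynomial in $N_E N_{E'}$ and of degree depending on $(m+1)(n+1)$. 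Optimizing $M$ then produces the $\sqrt{\log\log x}$-denominator. In Parts (1)(b) and (1)(c), whenever a factor has CM by an imaginary quadratic field $K$, I would factor $L(s,\mathrm{Sym}^m E)$ as a product of Hecke Gr\"ossencharacter $L$-functions $L(s,\psi^k)$ over $K$; these admit a classical zero-free region of width $c/\log(\cdot)$, which gives the cleaner $\sqrt{\log x}$-saving in (b) and the still stronger $\exp(-c\log x/(\sqrt{\log x}+\log N_E N_{E'}))$-saving in (c). In case (c)(ii), the additional main term $\tfrac12\mathbf{1}_{\pi/2\in I}\mathbf{1}_{\pi/2\in I'}$ arises from primes that are simultaneously inert in $K$ and in $K'$, which, when the discriminants share a prime, create a residue in the Rankin--Selberg product of Hecke characters.

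For Part (2), I would detect the congruence $p\equiv a\bmod q$ via orthogonality, writing it as $\varphi(q)^{-1}\sum_{\chi\bmod q}\overline{\chi}(a)\chi(p)$, and reduce to $S_{m,0,\chi}(x)$ for each Dirichlet character $\chi$ modulo $q$. The contribution of $\chi=\chi_0$ produces the main term, while non-principal $\chi$ contribute an error controlled by an effective PNT for the twisted $L(s,\mathrm{Sym}^m E\otimes\chi)$. The main wrinkle is a possible Landau--Siegel exceptional real zero, whose contribution is bounded effectively via a Deuring--Heilbronn / Siegel estimate to yield the $x^{-c/\sqrt q}$ savings. In the CM case (b), the Kronecker character $\chi_K$ attached to $K$ coincides with a Dirichlet character modulo $q$ precisely when $D_K\mid q$; the cross-term of $\chi_K$ with the Hecke decomposition of $L(s,E)$ is what produces the extra main term $\chi_K(a)\delta(K,q)(|I|/\pi-\mathbf{1}_{\pi/2\in I})/2$.

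The main obstacle will be (i) uniformly controlling the analytic conductor of each convolution $\mathrm{Sym}^m E\times \mathrm{Sym}^n E'\otimes\chi$ in $m$, $n$, and $q$, so that the PNT error admits the precise exponential dependence on $N_E$, $N_{E'}$, and $q$ claimed in each subcase, and (ii) in the CM cases, verifying cuspidality or else isolating the residues coming from coincidences among induced representations from distinct imaginary quadratic fields, so that the Rankin--Selberg machinery applies cleanly. Ramified-prime contributions from primes dividing $N_E N_{E'} q$ must be bookkept, since $U_m(\cos\theta_E(p))$ and $a_{\mathrm{Sym}^m E}(p)$ differ at finitely many primes, but these are absorbed without loss into the stated error terms.
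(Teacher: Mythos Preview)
Your proposal is correct and follows essentially the same strategy as the paper: Beurling--Selberg approximation of the indicators, reduction to effective prime number theorems for automorphic $L$-functions (Rankin--Selberg convolutions and twisted symmetric powers), Hecke Gr\"ossencharacters in the CM cases, and Dirichlet-character orthogonality for the arithmetic-progression case. Two small technical variants are worth noting. First, in the CM cases the paper does not expand in the Chebyshev $U_m$ basis and then factor $L(s,\mathrm{Sym}^m E)$ into Hecke $L$-functions as you propose; instead it first partitions primes by their split/inert behavior in $K$ (and $K'$), and on the split primes expands $\mathbf{1}_I(\theta_E(p))$ directly in the $\cos(m\theta)$ basis, which matches the single Hecke character $\xi_m$ and keeps each term a degree-$2$ (or degree-$4$ Rankin--Selberg) $L$-function. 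Second, in Part~(2)(a) the paper does not bound a Siegel-zero contribution for $L(s,\mathrm{Sym}^m E\otimes\chi)$ via Deuring--Heilbronn as you suggest; it \emph{eliminates} any exceptional zero for $m\ge1$ by an auxiliary isobaric Rankin--Selberg-square argument, so the $x^{-c/\sqrt q}$ term in the final bound comes solely from the classical Siegel bound applied to $\pi(x;q,a)$. Part~(1)(a) is not reproved in this paper but is quoted from Thorner's earlier effective joint Sato--Tate work.
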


This paper is organized as follows. In Section \ref{varieties} we state and prove the general results for the effective Sato-Tate distributions for K3 surfaces and double quadric surfaces, including Theorem \ref{k3case1}, Theorem \ref{k3case2} and Theorem $\ref{doublequadriccase1}$. The proof applies Theorem \ref{master} which will be proven later. In Section \ref{sinfo}, we present background information on Beurling-Selberg polynomials, elliptic curves, and symmetric power $L$-functions that will be required to prove Theorem \ref{master}. In Sections \ref{szerofree} and \ref{sPNT}, we prove all the zero-free regions and prime number theorems (respectively), used in the proof of Theorem \ref{master}. In Sections \ref{2a}, \ref{2b}, \ref{1b}, \ref{1c}, we prove parts 2.a, 2.b, 1.b, and 1.c of Theorem \ref{master}, respectively. In Appendix \ref{figure}, we plot the Sato--Tate distributions for all surfaces covered in Theorem \ref{k3main} and Theorem \ref{doublequadric} against numerical examples.

\section*{Acknowledgements}

The authors would like to thank Jesse Thorner for advising this project and for many helpful comments and discussions, and Ken Ono and Hasan Saad for many helpful discussions and valuable comments. The authors were participants in the 2022 UVA REU in Number Theory. They are grateful for the support of grants from the National Science Foundation (DMS-2002265, DMS-2055118, DMS-2147273), the National Security Agency (H98230-22-1-0020), and the Templeton World Charity Foundation. The authors used Wolfram Mathematica for computations.

\section{Proofs of Theorem \ref{k3case1}, Theorem \ref{k3case2}, and Theorem \ref{doublequadriccase1}}\label{varieties}

\subsection{Statement of the general results}

We will start by stating the most general results for the effective Sato-Tate distributions of K3 surfaces and double quadric surfaces. Throughout this section, let $\lambda\in \mathbb{Q}-\{0,1\}$ denote a rational number. Let $\lambda_1,\lambda_2\in\mathbb{Z}$ satisfy $\gcd(\lambda_1,\lambda_2)=1$ and $\lambda +1 = \frac{\lambda_1}{\lambda_2}$, and let $q_{\lambda}$ be the squarefree part of $\lambda_1 \lambda_2$.  Let $N_{\lambda}$ be the conductor of the Clausen elliptic curve $E_{-\lambda/(\lambda+1)}^{\mathrm{Cl}}$ defined in \eqref{eqn:Clausen_def}. For a real $r \in \mathbb{R}$ and closed interval $I=[a,b]\subset \mathbb{R}$, define $\mathbf{1}_{r\in I}$ to be $1$ if $r\in I$ and 0 otherwise. Let $E$ and $E'$ be two elliptic curves of conductors $N_E$ and $N_{E'}$, respectively. Finally, define the ``flying Batman'' distribution $B_1(x)$ by \begin{align*}
    B_1(x) &= \begin{cases}
                \frac{1}{4\pi\sqrt{3 - 2x - x^2}} + \frac{1}{4\pi\sqrt{3 + 2x - x^2}} & |x| < 1,\\
                \frac{1}{4\pi\sqrt{3 + 2|x| - x^2}} & 1 \leq |x| \leq 3, \\
                0 & \text{otherwise.}
                \end{cases}
\end{align*}

\begin{thm}\label{k3main}
Fix the notation in Section \ref{sintro}. Then there exists absolute constants $\Cr{162b1}, \Cr{162b2}$ such that the following are true for all $x \geq 16$.

\begin{enumerate}
    \item If $\lambda \not \in (\mathbb{Q}^2-1) \cup \{1/8, 1, -1/4, -1/64, -4, -64\}$, then for every subinterval $[a,b]\subset[-3,3]$, 
    \begin{align*}
        \frac{\# \left\{ p \leq X: a_{X_\lambda}^{*}(p)\in [a,b]\right\}}{\# \{p \leq X\}} - \int_{a}^{b} B(t) dt &\ll x^{-\frac{\Cr{162b1}}{\sqrt{q}}}+\frac{\log(N_Eq\log x)}{\sqrt{\log x}}.
    \end{align*}
    \item  If $\lambda \in (\mathbb{Q}^2-1)-\{0,-1,8\}$, then for every subinterval $[a,b]\subset[-1,3]$, 
    \begin{align*}
        \Big|\frac{\# \left\{ p \leq x: a_{X_\lambda}^{*}(p) \in [a,b]\right\}}{\# \{p \leq x\}} - \int_{a}^{b} \frac{1}{2\pi}\sqrt{\frac{3-t}{1+t}} dt\Big| \ll \frac{\log(N_E \log x)}{\sqrt{\log x}}.
    \end{align*}
    \item If $\lambda \in \{1/8, 1, -1/4, -1/64\}$, then for every subinterval $[a,b]\subset[-3,3]$, 
    \begin{align*}
        &\Big|\frac{\# \left\{ p \leq x: a_{X_\lambda}^{*}(p) \in [a,b]\right\}}{\# \{p \leq x\}} - \frac{\one_{1 \in [a, b]}+\one_{-1 \in [a, b]}}{4}- \int_{a}^{b} B_1(t) dt\Big|
        \\&\ll x^{-\Cr{162b1} /\sqrt{q}} + (\log x)^{9/2} \exp \Big ( \frac{-\Cr{162b2}\log x}{\sqrt{\log x} + \log(N_Eq)} \Big ).
    \end{align*}
    \item If $\lambda \in \{-4, -64\}$, then for every subinterval $[a,b]\subset[-1,3]$,
    \begin{align*}
        &\Big|\frac{\# \left\{ p \leq x: a_{X_\lambda}^{*}(p) \in [a,b]\right\}}{\# \{p \leq x\}} -\frac{\one_{1 \in [a, b]}}{2} - \int_{a}^{b} \frac{dt}{2\pi\sqrt{3 + 2t - t^2}} \Big| 
        \\&\ll x^{-\Cr{162b1} /\sqrt{q}} + (\log x)^{9/2} \exp \Big ( \frac{-\Cr{162b2}\log x}{\sqrt{\log x} + \log(N_Eq)} \Big ).
    \end{align*}
    \item If $\lambda = 8$, then for every subinterval $[a,b]\subset[-1,3]$,
    \begin{align*}
        &\Big|\frac{\# \left\{ p \leq X: a_{X_\lambda}^{*}(p) \in [a,b]\right\}}{\# \{p \leq X\}} -\frac{\one_{-1 \in [a, b]}}{2} - \int_{a}^{b} \frac{dt}{2\pi\sqrt{3 + 2t - t^2}} \Big|
        \\&\ll x^{-\Cr{162b1} /\sqrt{q}} + (\log x)^{9/2} \exp \Big ( \frac{-\Cr{162b2}\log x}{\sqrt{\log x} + \log(N_Eq)} \Big ).
    \end{align*}
\end{enumerate}
\end{thm}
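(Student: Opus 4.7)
The plan is to reduce all five cases of Theorem \ref{k3main} to Theorem \ref{master} part 2 via a single algebraic identity that relates $a_{X_\lambda}^*(p)$ to the Frobenius angle of the Clausen curve $E := E_{-\lambda/(\lambda+1)}^{\mathrm{Cl}}$. First, substituting $\mu = -\lambda/(\lambda+1)$ into \eqref{k3eq2} and using $\mu + 1 = 1/(\lambda+1)$, $\mu/(\mu+1) = -\lambda$, and $\phi_p(1/(\lambda+1)) = \phi_p(\lambda+1)$, one obtains
\[
a_{X_\lambda}^*(p) \;=\; p \cdot {}_3F_2(-\lambda,p) \;=\; \phi_p(\lambda+1)\bigl(a_E^*(p)^2 - 1\bigr) \;=\; \phi_p(\lambda+1)\bigl(1 + 2\cos 2\theta_E(p)\bigr).
\]
Thus the event $a_{X_\lambda}^*(p) \in [a,b]$ is controlled jointly by the sign $\phi_p(\lambda+1) \in \{\pm 1\}$ and the angle $\theta_E(p)$, and we are reduced to studying $\theta_E(p)$ for primes restricted to arithmetic progressions modulo the conductor of the Kronecker character $\chi_\lambda(\cdot) = \bigl(q_\lambda^\ast/\cdot\bigr)$ attached to the signed squarefree part of $\lambda_1\lambda_2$.

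Concretely, let $Q_\lambda \mid 4q_\lambda$ be this conductor and partition $(\Z/Q_\lambda\Z)^\times = S_+ \sqcup S_-$ according to the sign of $\chi_\lambda$. For $[a,b] \subseteq [-3,3]$, set $J_\pm := \{\theta \in [0,\pi] : \pm(1+2\cos 2\theta) \in [a,b]\}$; each $J_\pm$ is a union of at most two closed subintervals. Then
\[
\#\{p \le x : a_{X_\lambda}^*(p) \in [a,b]\} \;=\; \sum_{a \in S_+} \pi_{E,J_+}(x;Q_\lambda,a) \;+\; \sum_{a \in S_-} \pi_{E,J_-}(x;Q_\lambda,a),
\]
so each summand is exactly the object bounded by Theorem \ref{master} part 2.a (if $E$ is non-CM) or part 2.b (if $E$ is CM). The case $\lambda+1 \in \Q^2$ collapses $S_- = \emptyset$, $S_+ = (\Z/Q_\lambda\Z)^\times$, and one may use the unrestricted Sato--Tate statement following from Theorem \ref{master} 2.a with $q=1$.

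The final computation is to identify the main terms with the explicit distributions. A direct change of variables $t = 1+2\cos 2\theta$ (a two-to-one map whose Jacobian is $4|\sin 2\theta| = 2\sqrt{(3-t)(1+t)}$ with $\sin^2\theta = (3-t)/4$) pushes forward $\mu_{\mathrm{ST}}$ on $[0,\pi]$ to the density $\tfrac{1}{2\pi}\sqrt{(3-t)/(1+t)}$ on $[-1,3]$; symmetrizing with the $-$ sign recovers exactly $B(t)/(4\pi)$ on $[-3,3]$ (which is what is meant by the unnormalized Batman density $B$ in Theorem \ref{k3main}(1)), while keeping only the $+$ sign recovers the density of Theorem \ref{k3main}(2). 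In the CM cases, the atom of $\mu_{\mathrm{CM}}$ at $\pi/2$ contributes a point mass of weight $1/2$ at $\phi_p(\lambda+1) \cdot (1 + 2\cos\pi) = \pm 1$; the continuous part $d\theta/(2\pi)$ pushes forward to $\tfrac{1}{2\pi\sqrt{3+2t-t^2}}$, and the appropriate symmetrization produces the ``flying Batman'' $B_1$ in cases (3)--(5). The main obstacle is a bookkeeping issue confined to the CM cases: the correction term $\chi_K(a)\delta(K,Q_\lambda)$ in Theorem \ref{master} 2.b must be shown to either align with $\chi_\lambda$ and produce the observed symmetric deltas at both $\pm 1$ in case (3), or to vanish (i.e.\ $D_K \nmid Q_\lambda$) leaving only the one-sided delta in cases (4) and (5). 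This requires, for each of the seven explicit CM values of $\lambda$, computing $q_\lambda$, the conductor $N_\lambda$, and the CM discriminant $D_K$ of $E_{-\lambda/(\lambda+1)}^{\mathrm{Cl}}$, and verifying which of $\chi_K$ and $\chi_\lambda$ coincide on $(\Z/Q_\lambda\Z)^\times$; all other steps are formal applications of Theorem \ref{master} together with the Jacobian calculation above.
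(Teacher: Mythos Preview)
Your approach is essentially identical to the paper's: derive the identity $a_{X_\lambda}^*(p)=\phi_p(\lambda+1)(a_E^*(p)^2-1)$, split primes according to the sign of $\phi_p(\lambda+1)$ via its periodicity mod $4q_\lambda$, and feed the resulting arithmetic-progression counts into Theorem~\ref{master}(2.a) or (2.b). The paper's proof is terser (it hides the Jacobian computation under ``a simple calculation'' and records the CM data in Table~\ref{table1}), but structurally the two arguments coincide.

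One correction to your CM bookkeeping: you have the mechanism for cases (3) versus (4)--(5) reversed. If the correction term $\chi_K(a)\delta(K,Q_\lambda)$ \emph{vanishes upon summation} over $S_\pm$ (which happens when $\chi_K\neq\chi_\lambda$ as characters mod $Q_\lambda$, so that $\sum_{a\in S_\pm}\chi_K(a)=0$), then only the $\mu_{\mathrm{CM}}$ atom at $\pi/2$ survives, giving the \emph{symmetric} point masses $\tfrac14(\mathbf 1_{-1\in[a,b]}+\mathbf 1_{1\in[a,b]})$ of case (3). Conversely, when $\chi_K=\chi_\lambda$ (as for $\lambda=-4,-64$: there $\lambda+1=-3,-63$ and $\phi_p(\lambda+1)=\chi_K(p)$ exactly), the correction survives and combines with the $\mu_{\mathrm{CM}}$ atom to cancel one side and double the other, producing the one-sided $\tfrac12\mathbf 1_{1\in[a,b]}$ of case (4). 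Case (5) is simpler still: $\lambda+1=9$ is a square, so $\chi_\lambda$ is trivial, $S_-=\varnothing$, and one applies Theorem~\ref{master}(2.b) with $q=1$ (whence $\delta(K,1)=0$), leaving the single atom at $-1$. In every CM case one actually has $D_K\mid Q_\lambda$ whenever $Q_\lambda>1$; the dichotomy is whether $\chi_K$ equals or is orthogonal to $\chi_\lambda$, not whether $\delta$ vanishes.
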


To state the Sato-Tate distributions for double quadric surfaces, we define the following functions
\begin{align*}
    C_1(t)&=\frac{2}{\pi^2}\int_{|t|/2}^{2}\frac{1}{u}\sqrt{\bigg(1-\Big(\frac{u}{2}\Big)^2\bigg)\left(1-\Big(\frac{|t|}{2u}\Big)^2\right)}du, \\
    C_2(t)&=\frac{1}{2\pi^2}\int_{|t|/2}^{2}\frac{1}{u}\sqrt{\frac{1-(u/2)^2}{1-(|t|/2u)^2}}du, \\
    C_3(t)&=\frac{1}{8\pi^2}\int_{|t|/2}^{2}\frac{1}{u\sqrt{(1-(u/2)^2)(1-(|t|/2u)^2)}}du.
\end{align*}

\begin{thm}\label{doublequadric}
Let $E$ and $E'$ be two twist-inequivalent elliptic curves of conductors $N_E$ and $N_{E'}$. The following are true for all $x \geq 16$.
\begin{enumerate}
        \item If $E$ and $E'$ are both non-CM, then
        $$
        \Big|\frac{\#\left\{ p \leq x: a_{\mathcal{Z}}^*(p) \in [a,b]\right\}}{\# \{p \leq x\}} - \int_a^b C_1(t) dt \Big| \ll {\frac{\sqrt{\log(N_{E}N_{E'}\log\log x)}}{\sqrt[4]{\log\log x}}}.
        $$
        \item If $E$ is non-CM and $E'$ is CM, then
        $$
        \Big|\frac{\# \left\{ p \leq x: a_{\mathcal{Z}}^*(p) \in [a,b]\right\}}{\# \{p \leq x\}} - \int_a^b C_2(t) dt - \frac{1}{2}\one_{0 \in I}\Big|\ll \frac{\sqrt[3]{\log(N_E)^4\log(N_{E'}\log x)}}{\sqrt[6]{\log x}}.
        $$
        \item If $E, E'$ are both CM, and the discriminants of their CM fields are coprime, then there exists an absolute constant $\Cl[abcon]{2quad1}$ such that 
        $$
        \Big|\frac{\# \left\{ p \leq x: a_{\mathcal{Z}}^*(p) \in [a,b]\right\}}{\# \{p \leq x\}} - \int_a^b C_3(t) dt - \frac{3}{4}\one_{0 \in I}\Big| \ll \exp\left(\frac{-\Cr{2quad1}\log x}{\sqrt{\log x} +3\log N_{E}N_{E'}}\right).
        $$
        \item If $E, E'$ are both CM elliptic curves, and the discriminants of their CM fields are not coprime, then there exists an absolute constant $\Cl[abcon]{2quad2}$ such that 
        $$\Big|\frac{\# \left\{ p \leq x: a_{\mathcal{Z}}^*(p) \in [a,b]\right\}}{\# \{p \leq x\}} - 2\int_a^b C_3(t) dt - \frac{1}{2}\one_{0 \in I}\Big| \ll\exp\left(\frac{-\Cr{2quad2}\log x}{\sqrt{\log x} +3\log N_{E}N_{E'}}\right).
        $$
\end{enumerate}
\end{thm}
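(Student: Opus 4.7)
The plan is to reduce Theorem \ref{doublequadric} to the joint Sato--Tate distribution supplied by Theorem \ref{master}, using the identity $a_{\mathcal{Z}}^*(p)=a_E^*(p)\,a_{E'}^*(p)$ recorded in the introduction. Writing $a_E^*(p)=2\cos\theta_E(p)$ and $a_{E'}^*(p)=2\cos\theta_{E'}(p)$, one has
\[
\pi_{\mathcal{Z},[a,b]}(x)=\sum_{p\le x}\mathbf{1}_{R_{[a,b]}}(\theta_E(p),\theta_{E'}(p)),
\]
where $R_{[a,b]}:=\{(\theta,\theta')\in[0,\pi]^2:4\cos\theta\cos\theta'\in[a,b]\}$. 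Since $\partial R_{[a,b]}$ consists of finitely many smooth hyperbolic arcs together with segments of $\partial[0,\pi]^2$, one may sandwich $\mathbf{1}_{R_{[a,b]}}$ between a minorant $F^-$ and majorant $F^+$, each a finite sum of tensor products $\mathbf{1}_I(\theta)\mathbf{1}_{I'}(\theta')$, such that $\int(F^+-F^-)\,d\theta\,d\theta'\ll\delta$ for a mesh parameter $\delta$.

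For each of the four cases, I would apply the corresponding subcase of Theorem \ref{master} to every rectangle $I\times I'$ appearing in $F^\pm$ and sum. This gives a sandwich of the form
\[
\bigg|\pi_{\mathcal{Z},[a,b]}(x)-\pi(x)\iint_{R_{[a,b]}}d\mu_{\mathrm{joint}}\bigg|\ll \pi(x)\bigl(\delta+\delta^{-2}\eta\bigr),
\]
where $\eta$ denotes the per-rectangle error supplied by Theorem \ref{master}. A crude grid construction then yields $\eta^{1/3}$ after optimizing in $\delta$. To reach the sharper exponent in case (1), one constructs $F^\pm$ from one-dimensional Beurling--Selberg trigonometric polynomials of degree $\approx\eta^{-1/2}$ applied slicewise after decomposing $R_{[a,b]}$ along one coordinate; this costs only $\delta^{-1}$ rather than $\delta^{-2}$ and produces the claimed $\eta^{1/2}$ saving. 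For case (2), a similar but less aggressive tradeoff yields the cube-root exponent stated. In cases (3) and (4), where $\eta$ decays exponentially in $\sqrt{\log x}$, any polynomial blow-up in $1/\delta$ is absorbed and one recovers the stated exponential error bound essentially directly.

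It remains to identify the main terms as pushforwards of the joint Sato--Tate measure under $(\theta,\theta')\mapsto 4\cos\theta\cos\theta'$. In case (1), the product-density formula $f_{UV}(t)=2\int_{|t|/2}^{2}u^{-1}f_U(u)f_V(t/u)\,du$ applied to two semicircle densities $\frac{1}{2\pi}\sqrt{4-u^2}$ yields $C_1(t)$ exactly. In case (2), the CM marginal contributes a $\tfrac12$-atom at $v=0$, producing a $\tfrac12$-atom at $t=0$, while the continuous parts convolve to $C_2(t)$. In case (3), the two independent CM atoms combine to give $\Pr[UV=0]=\tfrac12+\tfrac12-\tfrac14=\tfrac34$ and the continuous parts convolve to $C_3(t)$. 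In case (4), the joint measure from Theorem \ref{master}(1)(c)(ii) equals $\tfrac12$ times uniform on $[0,\pi]^2$ plus a $\tfrac12$-atom at $(\pi/2,\pi/2)$, which pushes forward to a $\tfrac12$-atom at $t=0$ together with continuous density $2C_3(t)$. These match the main terms displayed in Theorem \ref{doublequadric}.

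The main obstacle is calibrating the approximation argument to hit the stated exponents: a naive grid sandwich saves only $\eta^{1/3}$, so case (1) requires a genuinely one-dimensional Beurling--Selberg construction adapted to the smooth hyperbolic boundary $4\cos\theta\cos\theta'\in\{a,b\}$, and the mild degeneration of this boundary near $\theta=\pi/2$ or $\theta'=\pi/2$ must be handled separately in a thin buffer region. A secondary difficulty in the CM cases is that the atoms of $\mu_{\mathrm{CM}}$ at $\theta=\pi/2$ must be separated from the continuous part before the approximation step, so that the atomic contributions survive the pushforward with the correct coefficients $\tfrac12,\tfrac34,\tfrac12$ in the final formula; in case (4), the correlation between the two CM marginals already present in Theorem \ref{master}(1)(c)(ii) is essential to obtain the factor of $2$ on $C_3(t)$ and the $\tfrac12$ rather than $\tfrac34$ on $\mathbf{1}_{0\in I}$.
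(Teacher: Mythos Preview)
Your proposal is correct in outline and leads to the stated bounds, but the paper's implementation is simpler and worth comparing. The paper works in the trace variables $(a_E^*(p),a_{E'}^*(p))\in[-2,2]^2$ rather than in $(\theta,\theta')$, so the region becomes $R=\{(u,v):a\le uv\le b\}$ bounded by genuine hyperbolas. It then uses a plain $L\times L$ grid: the key observation is that the union $\mathcal{S}$ of interior cells decomposes into at most $L$ \emph{vertical strips}, each of which is itself a rectangle $I\times I'$ to which Theorem~\ref{master} applies once. Together with the $O(L)$ boundary cells in $\mathcal{T}$, this gives only $O(L)$ applications of Theorem~\ref{master}, yielding error $O(L\eta+1/L)$ in case~(1) and hence the $\eta^{1/2}$ saving directly---no Beurling--Selberg construction is needed, and the ``degeneration near $\theta=\pi/2$'' you worry about does not arise. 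In cases~(2)--(4) the paper exploits that the measure of a boundary cell is $O(L^{-3/2})$ or smaller (from $S(I)T(I')$ or $T(I)T(I')$), so the boundary contribution is $O(L^{-1/2})$, and the optimization gives the stated cube-root and exponential bounds. Your pushforward computations of the main terms $C_1,C_2,C_3$ and the atomic masses $\tfrac12,\tfrac34,\tfrac12$ match the paper's. In short: same reduction, same optimization, but the paper's grid-and-strip device in trace space replaces your slicewise Beurling--Selberg machinery and avoids the boundary subtleties you flag.
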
 

It is clear that Theorem \ref{k3case1} and \ref{k3case2} are special cases of Theorem \ref{k3main}, and Theorem \ref{doublequadriccase1} is a special case of Theorem \ref{doublequadric}. In the rest of this section, we will present proofs for Theorem \ref{k3main} and Theorem \ref{doublequadric} using Theorem \ref{master}, which will be proven in Sections \ref{2a} to \ref{1c}.

\subsection{Proof of Theorem \ref{k3main}}

For simplicity, we only present the proof of Theorem \ref{k3main} for the intervals $[a, b] \subset [0, 3]$; the other cases are proved in the same way. In this proof, we fix $\lambda$ and denote 
$$
E:=E_{-\lambda/(\lambda+1)}^\text{Cl}, \qquad a(p):=a_{E_{-\lambda/(\lambda+1)}^\text{Cl}}^*(p), \qquad q:=q_\lambda.
$$
Let $p$ be a prime that does not divide $q$ and let $\phi_p$ be the unique quadratic character modulo $p$.

\begin{proof}[Proof of Theorem \ref{k3main} (1)]

In this case, we have 
\begin{equation}\label{eqn:range_of_lambda_1}
\lambda\notin\{r\in\mathbb{Q}\colon \sqrt{r+1}\in\mathbb{Q}\}\cup\{-64,-4,-\tfrac{1}{4},-\tfrac{1}{64},\tfrac{1}{8},1\}.
\end{equation}
From \cite[Page 191]{ono2004web} we find that condition \eqref{eqn:range_of_lambda_1} implies $E$ is a non-CM elliptic curve. Note that $\lambda\notin\{r\in\mathbb{Q}\colon \sqrt{r+1}\in\mathbb{Q}\}$ implies $q > 1$. Applying the relations between $X_\lambda$ and its Clausen elliptic curve $E$ (\eqref{k3eq1} and \eqref{k3eq2}), we have 
$$
a_{X_\lambda}^{*}(p)=\phi_p(\lambda+1)(a(p)^2-1).
$$
Note that when $\phi_p(\lambda+1)=1$, $a_{X_\lambda}^{*}(p)\in [a, b]$ is equivalent to
\begin{align*}
     a(p) \in \Big [ \sqrt{1 + a}, \sqrt{1 + b}\Big ]
\end{align*} and when $\phi_p(\lambda+1)=-1$, $a_{X_\lambda}^{*}(p)\in [a, b]$ is equivalent to \begin{align*}
     a(p) \in \Big [ \sqrt{1 - b}, \sqrt{1 - a}\Big ],
\end{align*} where each endpoint is set to be $0$ if it is not real. By quadratic reciprocity, $\phi_p(\lambda+1)$ as a function of $p$ is periodic with period dividing $4q$ when $p>4q$. Since $q \neq 1$ and square-free, when $j$ spans $(\mathbb{Z}/T\mathbb{Z})^{\times}$, exactly half of the $\phi_j(q)$ will be equal to $1$ and the other half will equal $-1$. Apply Theorem \ref{master} (2.a) to modulus $4q$; with a simple calculation, we obtain
\begin{align}
    \nonumber \frac{\# \left\{ p \leq X: a_{X_\lambda}^{*}(p)\in [a,b]\right\}}{\# \{p \leq X\}}  &= \int_{a}^{b} B(t) dt + O\Big(x^{-\frac{\Cr{162b1}}{\sqrt{q}}}+\frac{\log(N_Eq\log x)}{\sqrt{\log x}}\Big).
\end{align} \end{proof}

\begin{proof}[Proof of Theorem \ref{k3main} (2)]
This proof differs from the proof of Theorem \ref{k3main} (1) only in that $\lambda+1$ is a rational square, and thus $\phi_p(q) = 1$ for all $p$ not dividing $q$. Hence we may apply \cite[Theorem 1.1]{thorner2021effective} to derive that for all $x \geq 3$,
$$
\nonumber \frac{\# \left\{ p \leq x: a_{X_\lambda}^{*}(p) \in [a,b]\right\}}{\#\{p \leq X\}} = \frac{1}{2\pi}\int_{a}^{b} \sqrt{\frac{3-t}{1+t}} dt + O\Big( \frac{\log(N_E\log x)}{\sqrt{\log x}}\Big).
$$ 
\end{proof}

\begin{proof}[Proof of Theorem \ref{k3main} (3, 4, 5)] 

By \cite[Page 191]{ono2004web}, when $\lambda \in \{8, 1/8, 1, -4, -1/4, -64, -1/64\}$, $E$ is a CM elliptic curve. Let $K$ be the CM field of $E$ and $D_{K}$ the discriminant of $K$. Also, let $T$ be the period of $\phi_p(\lambda + 1)$. All the possible values of $\lambda$ and the corresponding $T$ and $D_K$ are presented in Table \ref{table1}.

\begin{table}
\captionof{table}{Values of $T$ and $D_K$ for $\lambda$ satisfying that $E_{-\lambda/(1+\lambda)}^\text{Cl}$ has CM. \label{table1}}
\begin{tabular}{ | m{1.1cm} | m{1.1cm}| m{1.1cm}| m{1.1cm}| m{1.1cm}| m{1.1cm}| m{1.1cm}| m{1.1cm}| } 
\hline
\vspace{0.1 cm}$\lambda$\vspace{0.1 cm} & \vspace{0.1 cm}$8$ \vspace{0.1 cm}& \vspace{0.1 cm}$1/8$ \vspace{0.1 cm}& \vspace{0.1 cm}$1$ \vspace{0.1 cm}& \vspace{0.1 cm}$-4$ \vspace{0.1 cm}& \vspace{0.1 cm}$-1/4$ \vspace{0.1 cm}& \vspace{0.1 cm}$-64$ \vspace{0.1 cm}& \vspace{0.1 cm}$-1/64$\vspace{0.1 cm}\\ 
\hline
\vspace{0.1 cm}$T$\vspace{0.1 cm} & \vspace{0.1 cm}$1$\vspace{0.1 cm} & \vspace{0.1 cm}$8$\vspace{0.1 cm} & \vspace{0.1 cm}$8$\vspace{0.1 cm} & \vspace{0.1 cm}$6$\vspace{0.1 cm} & \vspace{0.1 cm}$12$\vspace{0.1 cm} & \vspace{0.1 cm}$14$\vspace{0.1 cm} & \vspace{0.1 cm}$28$\vspace{0.1 cm}\\ 
\hline
\vspace{0.1 cm}$D_K$\vspace{0.1 cm} & \vspace{0.1 cm}$-4$\vspace{0.1 cm} & \vspace{0.1 cm}$-4$\vspace{0.1 cm} & \vspace{0.1 cm}$-8$\vspace{0.1 cm} & \vspace{0.1 cm}$-3$\vspace{0.1 cm} & \vspace{0.1 cm}$-3$\vspace{0.1 cm} & \vspace{0.1 cm}$-7$\vspace{0.1 cm} & \vspace{0.1 cm}$-7$\vspace{0.1 cm}\\ 
\hline
\end{tabular}
\end{table}

The proof now proceeds in the same way as that of Theorem \ref{k3main} (1). The only difference is that we apply Theorem \ref{master} (2.b) to modulus $T$, instead of (2.a) as in the above two cases. \end{proof}

\subsection{Proof of Theorem \ref{doublequadric}}

Let $E$ and $E'$ be two twist-inequivalent elliptic curves. For all subintervals $I \subset [-2, 2]$, define the semicircular measure $S(I)$ as$$
S(I)=\int_I \frac{1}{\pi}\sqrt{1-(t/2)^2}dt,
$$
and the reciprocal $T(I)$ of the semicircular distribution as
$$
T(I)=\int_I \frac{1}{2\pi\sqrt{1-(t/2)^2}}dt.
$$
Consider 
$$
R=\{(x,y)\in [-2,2]\times[-2,2]: a\le xy\le b\}.
$$
For an odd integer $L > 0$ that will be specified differently in each case, consider the $2L-2$ lines $x = -2 + 4k/L$ and $y = -2 + 4k/L$ for $1\le k \le L-1$. They divide the cell $[-2,2]\times [-2,2]$ into the $L^2$ smaller cells 
$$
\left[-2 + \frac{4i-4}{L}, -2 + \frac{4i}{L}\right]\times \left[-2 + \frac{4i'-4}{L}, -2 + \frac{4i'}{L}\right], \qquad 1\le i, j\le L.
$$
Since $a_E^*(p)$ and $a_{E'}^*(p)$ are both irrational whenever they are nonzero, no point $(a_E^*(p), a_{E'}^*(p))$ lies on the boundary of any of the above $L^2$ cells. 

\begin{proof}[Proof of Theorem \ref{doublequadric} (1)]

Consider any $1 \leq i, i' \leq L$, and define $I=[-2 + (4i-4)/L, -2 + 4i/L]$ and $I'=[-2 + (4j-4)/L, -2 + 4j/L]$. Applying Theorem \ref{master} (1.a) to $I$ and $I'$, we have \begin{align}
    \nonumber\#\{p\le x: (a_E^*(p),a_{E'}^*(p))\in I\times I'\}&=S(I)S(I')\pi(x)+O\bigg(\pi(x){\frac{\log(N_{E}N_{E'}\log\log x)}{\sqrt{\log\log x}}}\bigg)\\&\label{eqn:proof_doublequadric_1}\ll \pi(x)\bigg(\frac{1}{L^2}+{\frac{\log(N_{E}N_{E'}\log\log x)}{\sqrt{\log\log x}}}\bigg).
\end{align}

Let $\mathcal{S}$ be the union of all the smaller cells whose interior is contained in the interior of $R$. Let $\mathcal{T}$ be the union of all the smaller cells whose interior has non-empty intersection with $\{xy=a\}\cup\{xy=b\}$. It is easy to check that $\mathcal{T}$ contains $O(L)$ of the smaller cells. Now, we have
$$
\# \{p \leq x : (a_E^*(p), a_{E'}^*(p)) \in R\} = \# \{p \leq x : (a_E^*(p), a_{E'}^*(p)) \in \mathcal{S}\} + \#\{p\le x: (a_E^*(p),a_{E'}^*(p))\in \mathcal{T} \cap R\}.
$$

The $L-1$ vertical lines divide $\mathcal{S}$ into at most $L$ vertical strips each with width $1/L$. Applying Theorem \ref{master} (1.a) to those strips of $\mathcal{S}$, we obtain
\begin{align}\label{eqn:S_estimate}
    \#\{p\le x: (a_E^*(p),a_{E'}^*(p))\in \mathcal{S}\} & = \pi(x)\int_\mathcal{S}\frac{1}{\pi^2}\sqrt{\bigg ( 1 - \bigg (\frac{x}{2} \bigg )^2 \bigg )\bigg ( 1 - \bigg (\frac{y}{2} \bigg )^2 \bigg )}dxdy \\&\nonumber +O\bigg( \pi(x)\bigg(L{\frac{\log(N_{E}N_{E'}\log\log x)}{\sqrt{\log\log x}}}\bigg)\bigg).
\end{align} 
Note that
\begin{align}\label{eqn:negligble}
    \int_{R - \mathcal{S}} \frac{1}{\pi^2}\sqrt{\bigg ( 1 - \bigg (\frac{x}{2} \bigg )^2 \bigg )\bigg ( 1 - \bigg (\frac{y}{2} \bigg )^2 \bigg )}dxdy &\ll \frac{1}{L}
\end{align}
Applying \eqref{eqn:proof_doublequadric_1} to each cell of $\mathcal{T}$, we have 
\begin{equation}\label{eqn:T_estimate}
\#\{p\le x: (a_E^*(p),a_{E'}^*(p))\in \mathcal{T} \cap R\}\ll \pi(x)\bigg(L{\frac{\log(N_{E}N_{E'}\log\log x)}{\sqrt{\log\log x}}} + \frac{1}{L}\bigg).
\end{equation}
Collating \eqref{eqn:S_estimate}, \eqref{eqn:negligble}, and \eqref{eqn:T_estimate}, we obtain 
\begin{align*}
    \#\{p\le x: (a_E^*(p),a_{E'}^*(p))\in R\} & = \pi(x)\int_a^b C_1(t) +O\bigg( \pi(x)\bigg(L{\frac{\log(N_{E}N_{E'}\log\log x)}{\sqrt{\log\log x}}} + \frac{1}{L}\bigg)\bigg).
\end{align*} 
Taking $L=2\left \lfloor \frac{\sqrt[4]{\log\log x}}{\sqrt{\log(N_{E}N_{E'}\log\log x)}}\right \rfloor + 1$ concludes the proof. \end{proof}

\begin{proof}[Proof of Theorem \ref{doublequadric} (2)]

The proof proceeds similarly to that of Theorem \ref{doublequadric} (1). For simplicity, we only present the proof in the case that $0\notin [a,b]$; the full theorem may be proven in the same way. When $0\notin I'$, we have that by applying Theorem \ref{master} (1.b) and elementary calculus,
\begin{align*}
    \#\{p\le x: (a_E^*(p),a_{E'}^*(p))\in I\times I'\}&=S(I)T(I')\pi(x)+O\bigg(\pi(x)\frac{\log(N_{E})^4\log(N_{E'}\log x)}{\sqrt{\log x}}\bigg)\\&\ll \pi(x)\bigg(\frac{1}{L^{3/2}}+\pi(x)\frac{\log(N_{E})^4\log(N_{E'}\log x)}{\sqrt{\log x}}\bigg).
\end{align*}
Define $R$ similarly as in case (1). Then by a similar calculation as in case (1),
\begin{align*}
        \#\{p\le x: (a_E^*(p),a_{E'}^*(p))\in R\} =  \pi(x)\int_a^b C_2(t)+O\bigg(\pi(x)\bigg(L\frac{\log(N_{E})^4\log(N_{E'}\log x)}{\sqrt{\log x}} + \frac{1}{\sqrt{L}}\bigg)\bigg).
\end{align*}
Taking $L=2\left \lfloor\frac{\sqrt[3]{\log x}}{\sqrt[3]{\log(N_E)^8\log(N_{E'}\log x)^2}}\right \rfloor + 1$ now concludes the proof. 
\end{proof}

\begin{proof}[Proof of Theorem \ref{doublequadric} (3), (4)] The proofs of cases (3) and (4) proceed in exactly the same way, so we only present the proof for case (3). Define $\mathcal{T}$ and $R$ similarly as in the previous cases. Similar to case (2), we only consider the case when $0\notin [a,b]$. Now when $0\notin I$ and $0\notin I'$, we have 
\begin{align*}
    \#\{p\le x: (a_E^*(p),a_{E'}^*(p))\in I\times I'\}&=T(I)T(I')\pi(x)+O\bigg(\pi(x)\exp\bigg(\frac{-\Cr{161c1}\log x}{\sqrt{\log x} +3\log N_{E}N_{E'}}\bigg)\bigg)\\ &\ll \pi(x)\bigg(\frac{\min\{T(I),T(I')\}}{\sqrt{L}}+ \exp\bigg(\frac{-\Cr{161c1}\log x}{\sqrt{\log x} +3\log N_{E}N_{E'}}\bigg)\bigg).
\end{align*}
Now, consider any $c \in (-4, 4)$. Let $\mathcal{T}_c$ be the set of cells that have a nonempty intersection with the curve $\{xy = c\}$; evidently $|\mathcal{T}_c| \ll L(4 - |c|)$. By elementary calculus, for any $I \times I' \in S_c$,
\begin{align*}
    \min\{T(I),T(I')\} &\ll \frac{1}{L\sqrt{4 - \Big(\sqrt{c} + \frac{1}{L}\Big)^2}}.
\end{align*}
Thus 
$$
\sum_{I\times I'\in \mathcal{T}_c}\frac{\min\{T(I),T(I')\}}{\sqrt{L}}\ll \frac{1}{\sqrt{L}}.
$$ 
Hence, we have that
\begin{align*}
\#\{p\le x: (a_E^*(p),a_{E'}^*(p))\in \mathcal{T} \cap R\}\ll \pi(x)\bigg(L\exp\bigg(\frac{-\Cr{161c1}\log x}{\sqrt{\log x} +3\log N_{E}N_{E'}}\bigg)\bigg) + \frac{1}{\sqrt{L}}\bigg).
\end{align*}
Thus,
\begin{align*}\label{eqn:double_case_3}
    \#\{p\le x: (a_E^*(p),a_{E'}^*(p))\in R\} &= \pi(x) \bigg(\int_a^b C_3(t)+\frac{3}{4}\one_{0\in [a,b]} \bigg) 
    \\&+
    O\bigg(\pi(x)\bigg(L\exp\bigg(\frac{-\Cr{161c1}\log x}{\sqrt{\log x} +3\log N_{E}N_{E'}}\bigg)\bigg) + \frac{1}{\sqrt{L}}\bigg)\bigg).
\end{align*} 
Taking $L=2\left \lfloor\exp\bigg(\frac{2\Cr{161c1}\log x}{3\sqrt{\log x} +9\log N_{E}N_{E'}}\bigg)\right\rfloor + 1$ concludes the proof. \end{proof}

\section{Preliminaries for the Proof of Theorem \ref{master}}\label{sinfo}

In this section we introduce preliminary information that is needed for the proof of Theorem \ref{master}. Throughout this section, when $E$ (resp. $E'$) is an elliptic curve  with complex multiplication over an imaginary quadratic field $K$ (resp. $K'$), let $\chi_K$ (resp. $\chi_{K'}$) and $D_K$ (resp. $D_{K'}$) respectively denote the Kronecker character and the absolute discriminant of $K$ (resp. $K'$). Finally, for an interval $I$, let $\chi_I$ denote the characteristic function of $I$.

\subsection{Beurling-Selberg polynomials} 

As important technical tools, we will need both the original one-dimensional Beurling-Selberg polynomials, along with a two-dimensional analogue.  The following two Lemmas follow from \cite[Theorem 1]{montgomery_ten_lectures} and \cite[Theorem 2]{cochrane1988trigonometric}.

\begin{lem}\label{bspoly1}

Let $I \subseteq [0,\pi]$ be a closed subinterval and $M$ a positive integer. Then there exists an absolute constant $\Cl[abcon]{bspoly0} > 0$, and two polynomials 
$$
F^{\pm}_{I ,M}(\theta)=\sum_{0\le m\le M} \hat{F}_{I,M}^{\pm}(m)\cos(m\theta)
$$
such that
\begin{enumerate}
    \item for all $\theta\in [0,\pi]$
    $$
    F^{-}_{I,M}(\theta)\le \mathbf{1}_{I}(\theta)\le F^{+}_{I,M}(\theta);
    $$
    \item we have that
    $$
    \Big|\hat{F}_{I,M}^{\pm}(0)-\frac{|I|}{\pi}\Big| \leq \frac{\Cr{bspoly0}}{M};
    $$
    \item if $m\neq 0$, then
    $$
    \Big|\hat{F}_{I,M}^{\pm}(m)\Big| \leq \frac{\Cr{bspoly0}}{m}.
    $$
\end{enumerate}

\end{lem}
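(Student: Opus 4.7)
The plan is to deduce Lemma \ref{bspoly1} from the classical one-dimensional Beurling--Selberg construction on the circle by a symmetrization trick that forces evenness, converting the resulting trigonometric polynomials into cosine polynomials as required. Writing $I=[\alpha,\beta]\subseteq[0,\pi]$, I would first introduce the symmetric set
\[
\tilde I \;:=\; [-\beta,-\alpha]\cup[\alpha,\beta] \;\subseteq\; \mathbb{R}/2\pi\mathbb{Z},
\]
so that $\mathbf{1}_{\tilde I}$ is even on the circle and $\mathbf{1}_{\tilde I}(\theta)=\mathbf{1}_I(\theta)$ for $\theta\in[0,\pi]$.

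Next I would invoke \cite[Theorem 1]{montgomery_ten_lectures} (in combination with the Fourier-coefficient bound of \cite[Theorem 2]{cochrane1988trigonometric}) to produce trigonometric polynomials
\[
T^{\pm}(\theta) \;=\; \sum_{|m|\le M}\hat T^{\pm}(m)\,e^{im\theta}
\]
of degree at most $M$ such that $T^-(\theta)\le \mathbf{1}_{\tilde I}(\theta)\le T^+(\theta)$ on the whole circle, with $\hat T^{\pm}(0)=|\tilde I|/(2\pi)+O(1/M)=|I|/\pi+O(1/M)$ and $|\hat T^{\pm}(m)|\ll 1/|m|$ for $m\ne 0$. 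I would then symmetrize by defining
\[
F^{\pm}_{I,M}(\theta)\;:=\;\tfrac{1}{2}\bigl(T^{\pm}(\theta)+T^{\pm}(-\theta)\bigr).
\]
Because $\mathbf{1}_{\tilde I}$ is even, the inequalities $F^-_{I,M}\le \mathbf{1}_{\tilde I}\le F^+_{I,M}$ persist, and $F^{\pm}_{I,M}$ is now a real even trigonometric polynomial of degree at most $M$, i.e.\ a cosine polynomial $F^{\pm}_{I,M}(\theta)=\sum_{0\le m\le M}\hat F^{\pm}_{I,M}(m)\cos(m\theta)$, with $\hat F^{\pm}_{I,M}(0)=\hat T^{\pm}(0)$ and $\hat F^{\pm}_{I,M}(m)=\hat T^{\pm}(m)+\hat T^{\pm}(-m)$ for $m\ge 1$. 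This transfers conclusions (2) and (3) with at most a doubling of the absolute constant, and restricting to $[0,\pi]$, where $\mathbf{1}_{\tilde I}$ coincides with $\mathbf{1}_I$, gives (1).

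There is no substantive obstacle: the lemma is essentially a repackaging of a classical construction into a form tailored to $[0,\pi]$-valued inputs. The only bookkeeping needed is to verify that symmetrization neither raises the degree past $M$ nor destroys the $1/|m|$ decay of the Fourier coefficients, both of which are immediate from the explicit formulas above. The genuine analytic content is carried entirely by the cited Beurling--Selberg majorants of Montgomery and Cochrane.
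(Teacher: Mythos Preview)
Your proposal is correct and is essentially the derivation that the paper alludes to: the paper itself gives no argument beyond the sentence ``The following two Lemmas follow from \cite[Theorem 1]{montgomery_ten_lectures} and \cite[Theorem 2]{cochrane1988trigonometric},'' and your symmetrization trick is the standard way to extract cosine polynomials from the Beurling--Selberg trigonometric majorants on the circle. One small point worth making explicit: when $0<\alpha$ and $\beta<\pi$ the set $\tilde I$ is a disjoint union of two arcs rather than a single arc, so Montgomery's theorem does not apply to $\tilde I$ directly; instead one applies it to each arc separately and adds, which preserves both the sandwich inequalities (disjointness gives $\mathbf{1}_{[-\beta,-\alpha]}+\mathbf{1}_{[\alpha,\beta]}=\mathbf{1}_{\tilde I}$ pointwise) and the required coefficient bounds, while the boundary cases $\alpha=0$ or $\beta=\pi$ already make $\tilde I$ a single arc on $\mathbb{R}/2\pi\mathbb{Z}$.
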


\begin{lem}\label{bspoly2}

Let $I, I' \subseteq [0,\pi]$ be two closed subintervals and $M$ a positive integer. Then there exists an absolute constant $\Cl[abcon]{bspoly00} > 0$, and two polynomials 
$$
F^{\pm}_{I,I',M}(\theta, \theta')=\sum_{0\le m, m'\le M} \hat{F}_{I,I',M}^{\pm}(m,m')\cos(m\theta)\cos(m'\theta')
$$
such that \begin{enumerate}
    \item for all $\theta,\theta'\in [0,\pi]$, 
    $$
    F^{-}_{I,I',M}(\theta,\theta')\le \chi_{I}(\theta)\chi_{I'}(\theta')\le F^{+}_{I,I',M}(\theta,\theta');
    $$
    \item we have that
    \[
    \Big|\hat{F}_{I,I',M}^{\pm}(0,0)-\frac{|I|}{\pi}\Big| \leq \frac{\Cr{bspoly00}}{M};
    \]
    \item if $m \neq 0$, then
    \[
    \Big|\hat{F}_{I,I',M}^{\pm}(m,0)\Big|, \thickspace \Big|\hat{F}_{I,I',M}^{\pm}(0, m)\Big| \leq \frac{\Cr{bspoly00}}{m};
    \]
    \item if $m m'\neq 0$, then
    \[
    \Big|\hat{F}_{I,I',M}^{\pm}(m,m')\Big| \leq \frac{\Cr{bspoly00}}{mm'}.
    \]
\end{enumerate}

\end{lem}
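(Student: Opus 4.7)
The plan is to construct $F^{\pm}_{I,I',M}$ as explicit bilinear combinations of the one-dimensional Beurling--Selberg polynomials furnished by Lemma \ref{bspoly1}. A small observation I will use throughout is that the one-dimensional majorant $F^+_{I,M}$ is non-negative on $[0,\pi]$: since $F^+_{I,M}(\theta) \geq \mathbf{1}_I(\theta) \geq 0$ pointwise, non-negativity is automatic from the majorization. This will make multiplying inequalities safe.

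For the upper bound I would simply set
$$
F^+_{I,I',M}(\theta,\theta') := F^+_{I,M}(\theta)\, F^+_{I',M}(\theta').
$$
The inequality $\chi_I(\theta)\chi_{I'}(\theta') \le F^+_{I,M}(\theta) F^+_{I',M}(\theta')$ is immediate from multiplying the two one-dimensional majorant bounds, using that both factors on the right are non-negative. For the lower bound, the naive product $F^-_{I,M} F^-_{I',M}$ does \emph{not} work, because $F^-_{I,M}$ may take negative values. Instead I would use an inclusion--exclusion style argument: starting from $(F^+_{I,M} - \chi_I)(F^+_{I',M} - \chi_{I'}) \ge 0$ and expanding, I obtain
$$
\chi_I \chi_{I'} \ge F^+_{I,M} \chi_{I'} + \chi_I F^+_{I',M} - F^+_{I,M} F^+_{I',M}.
$$
Then, using non-negativity of $F^+_{I,M}$ together with the minorant bound $\chi_{I'} \ge F^-_{I',M}$ (and symmetrically in the other coordinate), I can further replace the indicators on the right by polynomials, which yields
$$
F^-_{I,I',M}(\theta,\theta') := F^+_{I,M}(\theta) F^-_{I',M}(\theta') + F^-_{I,M}(\theta) F^+_{I',M}(\theta') - F^+_{I,M}(\theta) F^+_{I',M}(\theta').
$$

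Since both $F^{\pm}_{I,I',M}$ are now bilinear in the one-dimensional cosine expansions and of degree at most $M$ in each variable, their double Fourier coefficients are explicit sums of products $\hat F^{\pm}_{I,M}(m)\,\hat F^{\pm}_{I',M}(m')$. The bounds in parts (2)--(4) follow directly from the estimates in Lemma \ref{bspoly1}: the constant terms $\hat F^{\pm}_{I,M}(0)$ and $\hat F^{\pm}_{I',M}(0)$ are each $|I|/\pi$ (resp.\ $|I'|/\pi$) up to $O(1/M)$, so the $(0,0)$-coefficient is the expected product up to $O(1/M)$; a product of a bounded constant term with a coefficient of size $O(1/m)$ gives the bound in (3); and two ``non-trivial'' coefficients combine to give the bound of size $O(1/(mm'))$ in (4). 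All the extra constants can be absorbed into $\Cr{bspoly00}$.

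The main obstacle is the sandwich inequality for the lower bound $F^-_{I,I',M}$: ensuring that the combination of $F^{\pm}$ one-dimensional polynomials really does minorize $\chi_I\chi_{I'}$ requires the non-negativity of the one-dimensional majorants at exactly the right place, and I would have to state this carefully. Once this algebraic decomposition is in place, propagating the Fourier-coefficient bounds through the product structure is essentially mechanical.
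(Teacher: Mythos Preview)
Your construction is correct, and it is precisely the standard product/inclusion--exclusion construction for multidimensional Beurling--Selberg majorants and minorants. The paper does not actually give its own proof of this lemma: it simply records that Lemmas~\ref{bspoly1} and~\ref{bspoly2} ``follow from \cite[Theorem 1]{montgomery_ten_lectures} and \cite[Theorem 2]{cochrane1988trigonometric}.'' Cochrane's Theorem~2 is exactly a two-dimensional Erd\H{o}s--Tur\'an--Koksma type result built from the one-variable Selberg polynomials in the way you describe, so your argument supplies the explicit details behind the paper's citation rather than a genuinely different route.

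One small remark: the statement of part~(2) as printed has $\hat F^{\pm}_{I,I',M}(0,0)$ close to $|I|/\pi$, which appears to be a typographical slip; your computation correctly gives the $(0,0)$--coefficient as $|I||I'|/\pi^2+O(1/M)$, and this is the form in which the lemma is actually applied later (see, e.g., the main term $\frac{|I||I'|}{\pi^2}$ in the proof of Theorem~\ref{master}~(1.c)). So your ``expected product'' is the right target, and the discrepancy is in the paper's statement, not in your argument.
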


The above two lemmas can be viewed as Fourier expansions of the characteristic functions with respect to the basis $\cos(m\theta)$. When we deal with elliptic curves without complex multiplication, it will be useful to change the basis of trigonometric polynomials in Lemma \ref{bspoly1} to the basis of the $m$-th Chebyshev polynomials of the second type $U_m(\cos(\theta))$, which form an orthonormal basis for $L^2([0, \pi], \mu_{ST})$ with respect to the usual inner product $\langle f, g \rangle = \int_0^{\pi} f(\theta)g(\theta)d\mu_{ST}$.
For a demonstration of said base change, see \cite[Section 3]{rouse2017explicit}.

\subsection{Newforms}

Here we briefly recall the notion of newforms. For a complete treatment, see Section 2.5 in \cite{ono2004web}.  For a positive integer $N$, recall that the level $N$ congruence subgroup $\Gamma_0(N)$ is defined by 
$$
\Gamma_0(N):=\left\{ 
\begin{pmatrix}
a & b\\
c & d
\end{pmatrix}
\in\textup{SL}_2(\Z): c\equiv 0 \bmod N\right\}.
$$
Throughout the rest of the paper, let $\mathcal{M}_{k}(\Gamma_0(N), \chi)$ denote the space of modular forms of weight $k$, level $N$, and nebentypus $\chi$, and let $S_{k}(\Gamma_0(N), \chi) \subset \mathcal{M}_{k}(\Gamma_0(N), \chi)$ denote the subspace of cusp forms, both with respect to $\Gamma_0(N)$. When $\chi$ is trivial, write $\mathcal{M}_{k}(\Gamma_0(N), \chi) = \mathcal{M}_{k}(\Gamma_0(N))$, $\mathcal{S}_{k}(\Gamma_0(N), \chi) = \mathcal{S}_{k}(\Gamma_0(N))$. Now, given a positive integer $d$, define the $V$-operator $V(d)$ as
$$
\Bigg(\sum_{n\ge n_0}c(n)q^n\Bigg)|V(d):=\sum_{n\ge n_0}c(n)q^{dn}.
$$
If $f(z)\in S_k(\Gamma_0(N))$ and $d>1$, then both $f(z)$ and $f(dz) = f(z) | V(d)$ are in $S_k(\Gamma_0(dN))$ (Proposition 2.22, \cite{ono2004web}). Thus there are at least two natural ways for a function in $S_k(\Gamma_0(dN))$ to come from lower levels. Motivated by this, we define the \textit{space of oldforms} $S_k^\textup{old}(\Gamma_0(N)) \subset S_k(\Gamma_0(N))$ as
$$
S_k^\textup{old}(\Gamma_0(N)):=\bigoplus_{\substack{dM|N \\ M \neq N}}S_k(\Gamma_0(M))|V(d),
$$
where we sum over pairs of positive integers $(d,M)$ satisfying $dM|N$ and $M\neq N$. Now, let $\mathfrak{F}_N$ denote the fundamental domain for the action of $\Gamma_0(N)$ on the upper half of the complex plane, and recall the \textit{Petersson inner product} between cusp forms $f(z), g(z) \in S_k(\Gamma_0(N))$, defined as
\begin{align*}
    \langle f, g \rangle := \frac{1}{[\textup{SL}_2(\mathbb{Z}) : \Gamma_0(N)]}\int_{\mathfrak{F}_N} f(z)\overline{g(z)}y^{k-2}dxdy
\end{align*}
where $z = x + iy$.

\begin{definition}
    Define $S_k^{\textup{new}}(\Gamma_0(N))$, the \emph{space of newforms}, to be the orthogonal complement of $S_k^\textup{old}(\Gamma_0(N))$ in $S_k(\Gamma_0(N))$ with respect to the Petersson inner product.
\end{definition}

\begin{definition}
    A \emph{newform} in $S_k^{\textup{new}}(\Gamma_0(N))$ is a normalized cusp form that is an eigenform for all the Hecke operators on $S_k^{\textup{new}}(\Gamma_0(N))$, the Atkin-Lehner involution $|_kW(N)$, and all of the Atkin-Lehner involutions $|_kW(Q_p)$ for each prime $p|N$ (for more details, see \cite[Section 2.5]{ono2004web}).
\end{definition}

\subsection{CM elliptic curves, Gr\"{o}ssencharacters, and automorphic induction}\label{secinfo}

Let $K$ be an imaginary quadratic field with absolute discriminant $D_K$, ring of integers $\mathcal{O}_K$, and absolute norm $\mathrm{N}=\mathrm{N}_{K/\mathbb{Q}}$ defined as $\mathrm{N}\abold = |\mathcal{O}_K/\abold|$ for all nonzero ideals $\abold \subset \mathcal{O}_K$.  Let $\chi_{K}$ be the Kronecker character associated to $K$; note that for all primes $p \nmid D_K$,
\begin{align*}
    \chi_K(p) &= \begin{cases} 1 & p \text{ splits in }K \\ -1 & p \text{ inert in }K \end{cases}
\end{align*}
\begin{definition}[\cite{iwaniec1997topics}, Section 12.2]
    Given a $u_\xi \in\mathbb{Z}$, define $\xi_{\infty} : K^\times \rightarrow S^1$ to be the group homomorphism satisfying
    $$
    \xi_\infty(a)=\bigg(\frac{a}{|a|}\bigg)^{u_\xi}.
    $$
    Let $\mathfrak{m}$ be an ideal of $\mathcal{O}_K$. Define a \emph{Hecke Gr\"{o}ssencharacter} $\xi$ with modulus $\mathfrak{m}$ to be a group homomorphism from
    \[
    I_\mbold = \{ \abold \text{ fractional ideal in }\mathcal{O}_K : (\abold, \mbold) = 1\}
    \]
    to 
    $$
    \{z \in \mathbb{C} : |z| = 1\}
    $$ that agrees with $\xi_\infty$ on
    \[
    P_\mbold = \{a\mathcal{O}_K : a \in K^{\times}, a \equiv 1 \bmod \mbold\}.
    \]
\end{definition}
Note that there may exist another Gr\"{o}ssencharacter $\xi^\ast$ of $K$ with modulus $\mbold^\ast \supset \mbold$ which satisfies that $\xi^\ast(\abold) = \xi(\abold)$ for all $\abold \in I_\mbold$. The largest ideal $\nbold$ which is a modulus of such a Gr\"{o}ssencharacter is defined as the conductor of $\xi$. If $\nbold = \mbold$, $\xi$ is called primitive. The following theorem characterizes the $L$-function $L(s,E)$ of an elliptic curve $E/\mathbb{Q}$ with CM over an imaginary quadratic field $K$ as the $L$-function of a Gr{\"o}ssencharacter defined over $K$. 
\begin{thm}[{\cite[Theorem II.10.5]{silverman1994advanced}}]\label{cmisgrossen}

Fix the notation above. Let $E/\mathbb{Q}$ be an elliptic curve with complex multiplication over an imaginary quadratic field $K$ with absolute discriminant $D_K$. Then there exists a primitive Gr\"{o}ssencharacter $\xi$ with conductor $\mathfrak{m} \subset \mathcal{O}_K$ and $u_{\xi}=1$ such that $\textup{N}\mathfrak{m}=N_E/D_K$ and $L(s,E)=L(s,\xi).$

\end{thm}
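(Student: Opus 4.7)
The plan is to construct $\xi$ from the compatible system of $\ell$-adic Galois representations attached to $E$ and then match local $L$-factors; this is essentially a reproof of Deuring's theorem. First, since $E/\Q$ has CM over $K$, all endomorphisms of $E$ are defined over $K$, so for each prime $\ell$ the Tate module $T_\ell(E)$ is free of rank one over $\mathcal{O}_K \otimes_\Z \Z_\ell$. Restricting the $\ell$-adic representation $\rho_\ell$ of $G_\Q := \mathrm{Gal}(\bar{\Q}/\Q)$ to $G_K := \mathrm{Gal}(\bar{\Q}/K)$ therefore commutes with the $\mathcal{O}_K$-action, hence factors through a character $\psi_\ell : G_K \to (\mathcal{O}_K \otimes \Z_\ell)^\times$. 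The family $\{\psi_\ell\}_\ell$ is a compatible system (because the $\rho_\ell$ are), so by class field theory it corresponds to a unique algebraic Hecke Gr\"ossencharacter $\xi$ of $K$; the infinity type $u_\xi = 1$ reflects that $E$ is a one-dimensional abelian variety (i.e., weight-one motive).

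Next I would verify local agreement of $L(s,E)$ and $L(s,\xi)$ prime-by-prime. For $p \nmid N_E$ that splits as $(p) = \mathfrak{p}\bar{\mathfrak{p}}$ in $K$, the Frobenius endomorphism on the reduction corresponds, via the CM action, to an element $\pi_\mathfrak{p} \in \mathcal{O}_K$ with $\pi_\mathfrak{p}\bar{\pi}_\mathfrak{p} = p$ and $\pi_\mathfrak{p} + \bar{\pi}_\mathfrak{p} = a_p(E)$; setting $\xi(\mathfrak{p}) := \pi_\mathfrak{p}$ yields the factorization
$$
1 - a_p p^{-s} + p^{1-2s} = \bigl(1 - \xi(\mathfrak{p})\,\mathrm{N}\mathfrak{p}^{-s}\bigr)\bigl(1 - \xi(\bar{\mathfrak{p}})\,\mathrm{N}\bar{\mathfrak{p}}^{-s}\bigr).
$$
At an inert prime $p$ of good reduction, $a_p(E) = 0$ and the local factor $(1 + p \cdot p^{-2s})^{-1}$ matches the single Euler factor of $L(s,\xi)$ at the prime above $p$ under the normalization $\xi(\mathfrak{p}) = -p$ (using $\mathrm{N}\mathfrak{p} = p^2$). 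The conductor identity $\mathrm{N}\mathfrak{m} = N_E/D_K$ then drops out of the conductor-discriminant formula applied to $\rho_\ell \simeq \mathrm{Ind}_{G_K}^{G_\Q} \psi_\ell$: the Artin conductor of an induction from $K$ factors as $D_K \cdot \mathrm{N}\mathfrak{f}_{\psi_\ell}$, and this must equal the conductor $N_E$ of $\rho_\ell$.

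The main obstacle, and the genuine technical heart, is justifying the isomorphism $\rho_\ell \simeq \mathrm{Ind}_{G_K}^{G_\Q} \psi_\ell$, i.e., that $L(s,E)$ over $\Q$ equals $L(s,\xi)$ rather than the degree-four $L(s, E/K) = L(s,\xi) L(s,\bar{\xi})$. The key input is that the image of $\rho_\ell$ lies in the normalizer of the non-split Cartan $(\mathcal{O}_K \otimes \Q_\ell)^\times$ inside $\mathrm{GL}_2(\Q_\ell)$, with complex conjugation in $\mathrm{Gal}(K/\Q)$ acting as the non-trivial Weyl element that swaps the two eigenspaces of $\mathcal{O}_K \otimes \Q_\ell$ on $T_\ell(E) \otimes \Q_\ell$; this is precisely the criterion for $\rho_\ell$ to be induced from $\psi_\ell$. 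Carrying this out rigorously, together with verifying that $\xi$ is primitive with exactly the stated conductor and that the inert-prime normalization is consistent across all $\ell$, requires careful ad\`elic bookkeeping and is the content of Deuring's theorem as established in Silverman.
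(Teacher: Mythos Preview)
The paper does not prove this theorem at all: it is stated with a direct citation to \cite[Theorem II.10.5]{silverman1994advanced} and used as a black box, so there is no ``paper's own proof'' to compare against. Your sketch is the standard Deuring argument and is essentially correct in outline; the identification of $\rho_\ell \simeq \mathrm{Ind}_{G_K}^{G_\Q}\psi_\ell$ via the normalizer-of-Cartan structure, the local Euler factor matching at split and inert primes, and the conductor identity via the conductor--discriminant formula are exactly the ingredients Silverman assembles.

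One point of friction worth flagging: the paper's Gr\"ossencharacters are normalized to be \emph{unitary} (valued in $\{z:|z|=1\}$, with $\xi_\infty(a)=(a/|a|)^{u_\xi}$), whereas your assignments $\xi(\mathfrak{p})=\pi_{\mathfrak{p}}$ and $\xi(\mathfrak{p})=-p$ at inert primes are in the \emph{algebraic} normalization (so that $|\xi(\mathfrak{p})|=\mathrm{N}\mathfrak{p}^{1/2}$). Your computations are internally consistent and give the arithmetic $L(s,E)$ with functional equation center at $s=1$ (weight-$2$ convention), but to land exactly on the paper's $L(s,\xi)$ you must divide by $\mathrm{N}\mathfrak{p}^{1/2}$ throughout, equivalently shift $s\mapsto s+\tfrac{1}{2}$. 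This is only a bookkeeping issue, not a gap, but since the paper later builds $f_m$ from the unitary $\xi_m$ via Theorem~\ref{autoinduct}, getting the normalization right here matters for everything downstream.
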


It will often be convenient to interpret the $L$-functions of Hecke Gr\"{o}ssencharacters as $L$-functions associated with modular forms via automorphic induction.

\begin{thm}[{\cite[Theorem 12.5]{iwaniec1997topics}}]\label{autoinduct}

Let $K$ be an imaginary quadratic field with absolute discriminant $D_K$ and let $\xi$ be a primitive Gr\"{o}ssencharacter with conductor $\mathfrak{m}$ and $u_{\xi}$ non-negative. Let $\chi$ be the Dirichlet character given by $\chi(n) = \chi_D(n)\xi((n))$ for all $n \in \mathbb{Z}^+$. Then the modular form
$$
f(z)=\sum_{\mathfrak{a}}\xi(\mathfrak{a})(\textup{N}\mathfrak{a})^{u_{\xi}/2}e(z\textup{N}\mathfrak{a}) \in \mathcal{M}_{u+1}(\Gamma_0(D_K \textup{N}\mathfrak{m}), \chi)
$$
is a newform and satisfies that
\begin{align*}
    L(s, f) &= L(s, \xi).
\end{align*}When $u_{\xi}>0$, $f$ is a cusp form.   

\end{thm}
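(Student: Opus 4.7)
The statement is classical and due originally to Hecke, and I would follow the standard theta series / converse theorem route. The plan is to establish convergence and the Dirichlet series identity first, then deduce modularity either by direct theta manipulation or by appealing to Weil's converse theorem, and finally to verify that the resulting form is new and cuspidal (when $u_\xi>0$).

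First I would check that $f(z)$ is a well-defined holomorphic function on $\mathbb{H}$. Since $|\xi(\mathfrak{a})|=1$ and the number of integral ideals of norm $n$ is $O(n^\varepsilon)$, the series $\sum_\mathfrak{a} \xi(\mathfrak{a})(\mathrm{N}\mathfrak{a})^{u_\xi/2} e(z\,\mathrm{N}\mathfrak{a})$ converges absolutely and uniformly on compacta in $\mathbb{H}$. Grouping ideals by norm, the $n$-th Fourier coefficient of $f$ is
\[
a_f(n)=n^{u_\xi/2}\sum_{\mathrm{N}\mathfrak{a}=n}\xi(\mathfrak{a}),
\]
and multiplicativity of $\xi$ together with the unique factorization of ideals in $\mathcal{O}_K$ shows that the normalized coefficients $a_f(n)n^{-u_\xi/2}$ are multiplicative and satisfy the expected Euler factors at good primes. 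This immediately yields the identity $L(s,f)=L(s,\xi)$ upon shifting $s\mapsto s+u_\xi/2$ and matching Dirichlet series.

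Next I would establish the modular transformation law. The cleanest conceptual path is Weil's converse theorem: one knows (by Hecke, see e.g.\ \cite[Ch.\ 3]{iwaniec1997topics}) that $L(s,\xi)$ and each of its twists $L(s,\xi\otimes\psi)$ by primitive Dirichlet characters $\psi$ of conductor coprime to $D_K\,\mathrm{N}\mathfrak{m}$ admit holomorphic continuation and satisfy a functional equation of the correct $\mathrm{GL}_2$-shape, with gamma factor $\Gamma_\mathbb{C}(s+u_\xi/2)$ and conductor $D_K\,\mathrm{N}\mathfrak{m}\cdot(\mathrm{cond}\,\psi)^2$. Converse theorem input then forces $f\in \mathcal{M}_{u_\xi+1}(\Gamma_0(D_K\mathrm{N}\mathfrak{m}),\chi)$ with the correct nebentypus $\chi=\chi_K\cdot(\xi\circ N)$ restricted to $\mathbb{Z}$; identifying this restriction with $\chi_K(n)\xi((n))$ is a short calculation. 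Alternatively one can bypass the converse theorem via direct theta series manipulations: split the sum over ideal classes, represent each class by a fixed fractional ideal, parametrize lattice points by $\alpha\in\mathfrak{c}$ modulo units, and apply Poisson summation after transforming by $z\mapsto -1/(Nz)$ for $N=D_K\mathrm{N}\mathfrak{m}$. This approach is more computational but makes the level and character explicit.

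Finally, I would verify newness and cuspidality. When $u_\xi>0$, primitivity of $\xi$ (conductor $\mathfrak{m}$, not a proper divisor) together with the multiplicative structure of the Fourier coefficients shows that $f$ cannot be obtained by applying a $V(d)$ operator to a form of lower level, so $f$ lies in the new subspace $S_{u_\xi+1}^{\mathrm{new}}(\Gamma_0(D_K\mathrm{N}\mathfrak{m}),\chi)$; being an eigenform for all Hecke operators then makes it a newform in the sense of the paper. Cuspidality when $u_\xi>0$ is immediate from the Fourier expansion (no constant term) together with an analogous check at the other cusps using the Fricke involution, which maps $f$ to a theta series attached to the dual Gr\"ossencharacter.

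The main obstacle is the middle step, namely producing the full modular transformation law under $\Gamma_0(D_K\mathrm{N}\mathfrak{m})$ and pinning down the nebentypus precisely. The converse theorem reduces this to an analytic statement about twisted $L$-functions of $\xi$, but one still needs Hecke's functional equation for $L(s,\xi\otimes\psi)$ with the correct conductor and root number, which is itself a nontrivial input. If one instead proceeds by direct theta inversion, the obstacle shifts to bookkeeping Gauss sums and ideal class data carefully enough to recover exactly the factor $D_K$ in the level rather than a divisor or multiple of it.
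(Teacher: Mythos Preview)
The paper does not prove this theorem at all; it is quoted verbatim from \cite[Theorem 12.5]{iwaniec1997topics} and used as a black box, so there is no proof in the paper to compare your proposal against. Your sketch is the standard Hecke--Weil argument and is essentially what Iwaniec carries out in the cited reference, so it is an appropriate proof, but be aware that the paper itself simply cites the result rather than reproving it.
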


Now, given a CM elliptic curve $E$, consider $\xi$ as defined in Theorem \ref{cmisgrossen}. Define $\xi_m (\bmod\text{~}\mathfrak{m}_m)$ to be the primitive Gr\"{o}ssencharacter which induces $\xi^m$, and let $\chi_m$ be the Dirichlet character given by $\chi_m(n)=\chi_D(n)\xi_m((n))$ for all $n \in \mathbb{Z}^+$. Note that $D_K\textup{N}\mbold_m | N_E$. Let $f_m$ be the holomorphic cusp form associated with $\xi_m$ as in Theorem \ref{autoinduct}. Then $f_m \in S_{m + 1}(\Gamma_0(|D_K|\textup{N}\mbold_m), \chi_m)$.

\subsection{Automorphic \emph{L}-functions}

Let $\mathbb{A}_\mathbb{Q}$ be the adele ring of $\mathbb{Q}$. Let $\mathfrak{F}_m$ denote the set of all cuspidal representations of $\text{GL}_m(\mathbb{A}_\mathbb{Q})$ with unitary central character that are trivial on the diagonally embedded copy of the positive real numbers. For $\pi\in\mathfrak{F}_m$, let $\tilde{\pi}$ denote the representation contragredient to $\pi$ and let $q_\pi$ denote the conductor of $\pi$. Now, there exists a standard $L$-function $L(s, \pi)$ attached to $\pi$; let the conductor of this $L$-function be $q_\pi$. The local parameters of this $L$-function are known as the Satake parameters, and for each prime $p$ the Satake parameters $\alpha_{1,\pi}(p), \dots, \alpha_{m,\pi}(p)\in \mathbb{C}$ satisfy that when $p\nmid q_\pi$, $\alpha_{j,\pi}(p)\neq 0$ for all $j$. For every $n \in \mathbb{Z}^+$, denote the $n$th Dirichlet coefficient of $L(s, \pi)$ as $a_\pi(n)$. The Dirichlet series of $L(s, \pi)$ is given by the formula
$$
L(s,\pi)=\prod_{p}\prod_{j=1}^{m}\Big(1-\frac{\alpha_{j,\pi}(p)}{p^s}\Big)^{-1}=\sum_{n=1}^{\infty}\frac{a_\pi(n)}{n^s},
$$
which converges absolutely when $\re(s)> 1$. Define 
$$
\Gamma_\mathbb{R}(s)=\pi^{-s/2}\Gamma(s/2),
$$
where $\Gamma$ is the usual gamma function. The local parameters at infinity, $\mu_\pi(1), \dots, \mu_\pi(n)$, of $L(s, \pi)$, are known as the Langlands parameters. The gamma factor $\gamma(s, \pi)$ of $L(s, \pi)$ is by definition
$$
\gamma(s,\pi)=\prod_{j=1}^{m}\Gamma_\mathbb{R}(s+\mu_\pi(j)).
$$ Note that all of the relevant automorphic representations within this paper will satisfy the Generalized Ramanujan conjecture, so we have the following bounds on the Satake and Langlands parameters,
\begin{equation}
\label{GRC}
|\alpha_{j,\pi}(p)|\le 1,\qquad \re(\mu_\pi(j)) \ge 0.
\end{equation}
Note that as particular examples of automorphic $L$-functions, any Dirichlet $L$-function $L(s,\chi)$ corresponds to a one-dimensional cuspidal representation $\chi$ of $\text{GL}_1(\mathbb{A}_\mathbb{Q})$ and thus is an automorphic $L$-function. 

Recall that $L(s,\pi)$ is always meromorphic; if $\pi$ is the trivial representation $\mathbbm{1}$ of $\text{GL}_1(\mathbb{A}_\mathbb{Q})$ then $L(s, \pi)$ has a simple pole at $s = 1$, otherwise $L(s, \pi)$ is entire. Denote the order of the pole of $L(s,\pi)$ at $s=1$ as $r_\pi$. Then $r_\pi=1$ when $\pi=\mathbbm{1}$ and $r_\pi = 0$ otherwise. Now, the complete $L$-function 
$$
\Lambda(s,\pi)=(s(s-1))^{r_\pi}q_\pi^{s/2}L(s,\pi)\gamma(s, \pi)
$$
is entire of order 1. Moreover, there exists a $W(\pi) \in \mathbb{C}$ satisfying $|W(\pi)| = 1$, for which the functional equation
$$
\Lambda(s,\pi)=W(\pi)\Lambda(1-s,\tilde{\pi}),
$$
holds true. Note that $q_{\tilde{\pi}} = q_{\pi}$, and the following sets are equal:
\[
    \{\alpha_{j, \tilde{\pi}}(p)\} = \{\overline{\alpha_{j, \pi}(p)}\}, \qquad \{\mu_{\tilde{\pi}}(j)\} = \{\overline{\mu_{\pi}(j)}\}.
\]
The analytic conductor of $L(s,\pi)$ is defined as 
$$
C(\pi)=q_\pi\prod_{j=1}^{m}(3+|\mu_\pi(j)|).
$$
Note that throughout this paper, we will use $C(\cdot)$ to generally refer to the analytic conductor of an $L$-function (see \cite[Page 95]{iwaniec2021analytic} for the definition).
Moreover, we will define $\Lambda_\pi(n)$ to be the $n$th coefficient of the logarithmic derivative of $-L(s, \pi)$. Specifically, we have the formula
$$
\sum_{n=1}^{\infty}\frac{\Lambda_\pi(n)}{n^s}=-\frac{L'}{L}(s,\pi)=\sum_p\sum_{\ell=1}^{\infty}\frac{\sum_{j=1}^{m}\alpha_{j,\pi}(p)^\ell\log p}{p^{\ell s}}.
$$

\subsection{Rankin-Selberg \emph{L}-functions}

Given any two $\pi\in\mathfrak{F}_m$ and $\pi'\in\mathfrak{F}_{m'}$, it is often useful to consider the Rankin-Selberg convolution of their $L$-functions. This convolution is an $L$-function itself, with Satake parameters denoted as $\alpha_{j,j',\pi\times \pi'}(p)$. A complete description of these parameters is given in \cite[Appendix]{soundararajan2019weak}. Note that if $m' = m$ and $\pi' = \tilde{\pi}$, then we call the resulting Rankin-Selberg convolution the Rankin-Selberg square of $\pi$. Note that for any prime $p\nmid q_\pi q_{\pi'}$, we have that
$$
\{\alpha_{j,j',\pi\times \pi'}(p)\}=\{\alpha_{j,\pi}(p)\alpha_{j',\pi'}(p)\}.
$$
The Dirichlet series of the Rankin-Selberg convolution of $L(s,\pi)$ and $L(s,\pi')$ is given by
$$
L(s,\pi\times \pi')=\prod_p\prod_{j=1}^{m}\prod_{j'=1}^{m'}\Big(1-\frac{\alpha_{j,j',\pi\times \pi'}(p)}{p^{s}}\Big)^{-1}=\sum_{n=1}^
{\infty}\frac{a_{\pi\times\pi'}(n)}{n^s},
$$
is associated to the tensor product $\pi \times \pi'$, and converges absolutely for $\re(s)>1$. Let $q_{\pi\times\pi'}$ be the conductor of $L(s,\pi\times\pi')$. Note that $q_{\pi\times\pi'}|q_{\pi}^{m'}q_{\pi'}^m$ (see \citep{bushnell1997upper}). Now, denote the Langlands parameters of $L(s, \pi \times \pi')$ as $\mu_{\pi\times\pi'}(j,j')\in \mathbb{C}$; a complete description of these Langlands parameters can be found in \cite[Proof of Lemma 2.1]{soundararajan2019weak}. By definition, the gamma factor of $L(s, \pi \times \pi)$ is given by 
$$
\gamma(s,\pi\times\pi')=\prod_{j=1}^{m}\prod_{j'=1}^{m'}\Gamma_\mathbb{R}(s+\mu_{\pi\times\pi'}(j,j')).
$$
It is known that $L(s,\pi\times\pi')$ is entire if $\pi'\neq \tilde{\pi}$. If $\pi' = \tilde{\pi}$, $L(s,\pi\times\pi')$ has a simple pole at $s=1$ and is holomorphic elsewhere. Let $r_{\pi\times\pi'}$ denote the order of the pole of $L(s,\pi\times\pi')$ at $s=1$; then $r_{\pi\times\pi'}=1$ if $\pi$ and $\pi'$ are dual to each other and $r_{\pi\times\pi'}=0$ otherwise. The completed $L$-function of $L(s, \pi \times \pi')$ is given by
$$
\Lambda(s,\pi\times\pi')=(s(s-1))^{r_{\pi\times\pi'}}q_{\pi\times\pi'}^{s/2}L(s,\pi\times\pi')\gamma(s,\pi\times\pi').
$$
Note that $\Lambda(s,\pi\times\pi')$ is entire and of order $1$. Moreover, there exists a $W(\pi\times\pi')\in\mathbb{C}$ satisfying $|W(\pi\times\pi')| = 1$ such that the functional equation
$$
\Lambda(s,\pi\times\pi')=W(\pi\times\pi')\Lambda(1-s,\tilde{\pi}\times\tilde{\pi'}),
$$
holds. Define the analytic conductor  of $L(s, \pi \times \pi')$ as
$$
C(\pi\times\pi')=q_{\pi\times\pi'}\prod_{j=1}^{m}\prod_{j'=1}^{m'}(3+|\mu_{\pi\times\pi'}(j,j')|).
$$
By the work of Bushnell and Henniart \citep{bushnell1997upper} and Brumley \cite[Appendix]{humphries2019standard}, we have the following bound for the analytic conductor
\begin{align}\label{conductorbound}
\log C(\pi\times\pi') \ll m' \log C(\pi)+m\log C(\pi')
\end{align}
Finally, we denote the $n$th coefficient of the negative log derivative of $L(s, \pi \times \pi')$ as $\Lambda_{\pi \times \pi'}(n)$. By definition, we have
$$
\sum_{n=1}^{\infty}\frac{\Lambda_{\pi\times\pi'}(n)}{n^s}=-\frac{L'}{L}(s,\pi\times\pi')=\sum_p\sum_{\ell=1}^{\infty}\frac{\sum_{j=1}^{m}\alpha_{j,\pi\times\pi'}(p)^\ell\log p}{p^{\ell s}}.
$$

\subsection{Isobaric sums} Here we recall some basic facts about the isobaric sum operation $\boxplus$, first introduced by Langlands in \cite{langlands1977isobaric}. Let $k \geq 1$ be an integer, let $m_1, \ldots m_k \geq 1$ be integers, let $\pi_i \in \mathfrak{F}_{m_i}$, let $r = \sum_{i=1}^k m_i$, and let $t_1, t_2, \ldots t_k \in \mathbb{R}$. Consider the isobaric automorphic representation $\Pi$ of $\GL_r(\mathbb{A})$, defined by
\begin{align*}
    \Pi &= \pi_1 \otimes |\det|^{it_1} \boxplus \ldots \boxplus \pi_k \otimes |\det|^{it_k}
\end{align*}
The $L$-function associated to $\Pi$ is
\begin{align*}
    L(s, \Pi) &= \prod_{j=1}^k L(s + it_j, \pi_j).
\end{align*}
The analytic conductor of this $L$-function is defined as
\begin{align*}
    C(\Pi, t) &= \prod_{j=1}^k C(\pi_j, t + t_j) \\
    C(\Pi) &= C(\Pi, 0).
\end{align*}
Now, let $k' \geq 1$ be an integer, let $m'_1, \ldots m'_{k'} \geq 1$ be integers, let $\pi'_i \in \mathfrak{F}_{m'_i}$, let $r' = \sum_{i=1}^{k'} m_i'$, and let $t'_1, \ldots t'_{k'} \in \mathbb{R}$. Consider the isobaric automorphic representation $\Pi' \in \GL_{r'}(\mathbb{A})$, defined by
\begin{align*}
    \Pi &= \pi'_1 \otimes |\det|^{it'_1} \boxplus \ldots \boxplus \pi_{k'} \otimes |\det|^{it'_{k'}}.
\end{align*}
Then, the Rankin-Selberg convolution of $L(s, \pi)$ and $L(s, \pi')$ is given by
\begin{align*}
    L(s, \Pi \times \Pi') &= \prod_{j=1}^k \prod_{j'=1}^{k'} L(s + it_j + it'_{j'}, \pi_j \times \pi'_{j'})
\end{align*}
and has analytic conductor
\begin{align*}
    C(\Pi \times \Pi', t) &= \prod_{j=1}^d \prod_{j'=1}^{d'} C(\pi_j \times \pi'_{j'}, t + t_j + t'_{j'}) \\
    C(\Pi \times \Pi') &= C(\Pi \times \Pi', 0).
\end{align*}
In our establishment of zero-free regions below we will implicitly need the following lemma, which is evident from \cite[Section A]{soundararajan2019weak}:
\begin{lem}
    Given any unitary isobaric automorphic representation $\Pi$ with $L$-function $L(s, \pi)$, the Dirichlet coefficients of $-\frac{L'}{L}(s, \Pi \times \tilde{\Pi})$ are all nonnegative.
\end{lem}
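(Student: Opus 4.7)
The plan is to exhibit each Dirichlet coefficient of $-\frac{L'}{L}(s, \Pi \times \tilde\Pi)$ as a non-negative real number by directly unpacking the Euler product. Writing the isobaric sum as $\Pi = \pi_1 \otimes |\det|^{it_1} \boxplus \cdots \boxplus \pi_k \otimes |\det|^{it_k}$, the twist identity $L(s, \pi_i \otimes |\det|^{it_i}) = L(s + it_i, \pi_i)$ shows that at each unramified prime $p$ the Satake parameters of $\Pi$ form the multiset $\beta_{i,j}(p) := \alpha_{j,\pi_i}(p)\, p^{-it_i}$, indexed by pairs $(i,j)$ with $1 \leq i \leq k$ and $1 \leq j \leq m_i$. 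Since $\tilde\Pi = \tilde\pi_1 \otimes |\det|^{-it_1} \boxplus \cdots \boxplus \tilde\pi_k \otimes |\det|^{-it_k}$ and the $t_i$ are real, its Satake parameters at $p$ are the complex conjugates $\overline{\beta_{i,j}(p)}$, and hence the Satake parameters of $\Pi \times \tilde\Pi$ at $p$ are the pairwise products $\beta_{i,j}(p)\, \overline{\beta_{i',j'}(p)}$.

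Next, I would take the logarithmic derivative of the Euler product at an unramified prime to obtain
\begin{equation*}
-\frac{L'_p}{L_p}(s, \Pi \times \tilde\Pi) = \sum_{\ell=1}^{\infty} \frac{\log p}{p^{\ell s}} \sum_{(i,j),(i',j')} \beta_{i,j}(p)^\ell\, \overline{\beta_{i',j'}(p)}^\ell.
\end{equation*}
The crucial observation is that the inner double sum factors as a modulus squared:
\begin{equation*}
\sum_{(i,j),(i',j')} \beta_{i,j}(p)^\ell\, \overline{\beta_{i',j'}(p)}^\ell = \Bigl|\sum_{(i,j)} \beta_{i,j}(p)^\ell\Bigr|^2 \geq 0.
\end{equation*}
This establishes non-negativity of the local coefficients at every unramified prime power.

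The only remaining ingredient is to handle the ramified local factors of the components $L(s, \pi_i \times \tilde\pi_{i'})$, where a bare Satake description is not available. The necessary local computation, working with the local Langlands parameters rather than Satake parameters, is carried out in detail in the appendix of \cite{soundararajan2019weak}, and an entirely analogous factorization identity yields non-negativity at the ramified primes as well. I expect this ramified bookkeeping to be the only technical subtlety, but since it is already done in the cited reference, multiplying together the prime-by-prime contributions via the Euler product identity $L(s, \Pi \times \tilde\Pi) = \prod_{i,i'} L(s + i(t_i - t_{i'}), \pi_i \times \tilde\pi_{i'})$ immediately assembles the local non-negativity into the claimed statement for every Dirichlet coefficient.
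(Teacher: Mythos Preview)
Your proposal is correct and takes essentially the same approach as the paper: both defer the substantive computation to \cite[Section A]{soundararajan2019weak}, with your version additionally spelling out the easy unramified case as a modulus-squared identity. The paper gives no proof beyond the citation, so your write-up is strictly more detailed while resting on the identical reference for the ramified local factors.
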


\subsection{Symmetric power \emph{L}-functions}\label{ssympowerinfo}

A particular type of $L$-function of interest to us is the symmetric power $L$-function. Consider an elliptic curve $E/\mathbb{Q}$. Recall that by the modularity theorem \citep{breuil2001modularity}, there exists a non-CM cusp form $f(z)\in S_2^\text{new}(\Gamma_0(N_E))$ of weight $2$ and level $N_E$ corresponding to $E$. Now, let the Fourier expansion of $f$ be $f(z)=\sum_{n=1}^{\infty}a_f(n)e^{2\pi inz}$. We have that $a_f(p)$ agrees with the trace of Frobenius of $E$ modulo $p$, so thus we may write $a_f(p) = 2\sqrt{p}\cos\theta_E(p)$ (where $\theta_E(p)$ is as defined in Section \ref{sintro}).

For each non-negative integer $m$ and prime $p$ consider the Satake parameters $\alpha_{0,\Sym^m E}(p),\dots, \\ \alpha_{m,\Sym^mf}(p)\in \mathbb{C}$. When $p \nmid N_E$, the Satake parameters satisfy the useful identity
$$
\{\alpha_{0,\Sym^m E}(p),\ldots,\alpha_{m,\Sym^m E}(p)\}=\{e^{i(m-2j)\theta_E(p)}\colon 0\leq j\leq m\}.
$$
A complete description of the values of $\alpha_{j,\Sym^m E}(p)$ can be derived from \cite[Appendix]{soundararajan2019weak}. Now, the $m$th symmetric power $L$-function associated to $f$ (denoted $L(s, \Sym^m f) = L(s, \Sym^m E)$) is the $L$-function with local parameters
\[
\{\alpha_{0,\Sym^m f}(p),\dots, \alpha_{m,\Sym^mf}(p)\}
\]
If we denote the $n$th Dirichlet coefficient of the $m$th symmetric power $L$-function as $a_{\Sym^m E}(n)$, then by definition we have the Euler product and Dirichlet series expansions
$$
L(s,\Sym^m E)=\prod_p\prod_{j=0}^{m}\Big( 1-\frac{\alpha_{j,\Sym^m E}(p)}{p^s}\Big)^{-1}=\sum_{n=1}^{\infty}\frac{a_{\Sym^m E}(n)}{n^s},
$$
which converge for $\re(s)>1$. When $p \nmid N_E$, we may readily compute $a_{\Sym^m E}(p)=U_m(\cos\theta_p)$, where $U_m$ is the $m$-th Chebyshev polynomial of the second kind. Note that $L(s, \Sym^m E)$ is a self-dual $L$-function.

Define $\Gamma_\mathbb{C}(s):=\Gamma_{\mathbb{R}}(s)\Gamma_{\mathbb{R}}(s+1)=2(2\pi)^{-s}\Gamma(s)$, let $q_{\Sym^m E}$ be the conductor of $L(s, \Sym^m E)$, and for even $m$, let $r \in \{0, 1\}$ such that $r \equiv m/2 \bmod 2$.
The gamma factor of $L(s, \Sym^m E)$ is given by 
$$
\gamma(s,\Sym^m E)=\begin{cases} \prod_{j=1}^{(m+1)/2}\Gamma_\mathbb{C}(s+(j-\frac{1}{2})) & \text{if } m\text{ is odd } \\
\Gamma_\mathbb{R}(s+r)\prod_{j=1}^{m/2}\Gamma_\mathbb{C}(s+j) & \text{if } m\text{ is even. }
\end{cases}
$$
One may define the analytic conductor $C(\Sym^m E)$ of $L(s,\Sym^m E)$ similarly to the previous subsections.
Note that the complete $L$-function of $L(s, \Sym^m E)$ is entire of order $1$, and is given by
$$
\Lambda(s,\Sym^m E)=q_{\Sym^m E}^{s/2}\gamma(s,\Sym^m E)L(s,\Sym^m E).
$$
Also, there exists a $W(\Sym^m E) \in \mathbb{C}$ satisfying $|W(\Sym^m E)| = 1$ for which the functional equation
$$
\Lambda(s,\Sym^m E)=W(\Sym^m E)\Lambda(1-s,\Sym^m E)
$$
holds.

Let $\pi_f\in\mathfrak{F}_2$ be the cuspidal form corresponding to $f$. Then as detailed in \cite[Theorem 6.1]{thorner2021effective}, due to the work by Newton and Thorne (\cite[Theorem B]{thorne2021symmetric} and \cite[Theorem A]{newton2021symmetric}) and the work in \citep{moreno1985functions} and \citep{cogdell2004complex}, we know that $L(s,\Sym^m E)$ is the standard $L$-function associated to the representation $\Sym^m \pi_f\in \mathfrak{F}_{m+1}$ with the same gamma factor, complete $L$-function, and functional equation (thus henceforth we will often write $\Sym^m E = \Sym^m \pi_f$). Now, by \cite[Section A.2]{chantaldavid}, we have 
$$
\log q_{\Sym^m E}\ll m\log N_E.
$$
Through a straightforward calculation using Stirling's formula and the above properties, we may now obtain
\begin{align}\label{conductorbound2}
    \log C(\Sym^m E)\ll m\log(N_Em).
\end{align}

\section{Zero-free regions}\label{szerofree}

Throughout this section we will fix the notation from Sections \ref{sintro} and \ref{sinfo}. The following proposition, taken from \cite{thorner2021effective}, will be useful in establishing many of the zero-free regions used in this paper.
\begin{prop}[{\cite[Proposition 4.1]{thorner2021effective}}]\label{prop41}
    Let $\Pi$ be an isobaric automorphic representation of $\GL_r(\mathbb{A})$. If $L(s, \Pi \times \tilde{\Pi})$ has a pole of order $r_{\Pi \times \tilde{\Pi}} \geq 1$ at $s = 1$, then $L(1, \Pi \times \tilde{\Pi}) \neq 0$, and there exists a constant $\Cl[abcon]{prop41}$ such that $L(s, \Pi \times \tilde{\Pi})$ has at most $r_{\Pi \times \tilde{\Pi}}$ real zeroes in the interval
    \begin{align*}
        s &\geq 1 - \frac{\Cr{prop41}}{(r_{\Pi \times \tilde{\Pi}} + 1)\log C(\Pi \times \tilde{\Pi})}.
    \end{align*}
\end{prop}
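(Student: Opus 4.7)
The plan is to combine the Hadamard factorization of the completed $L$-function $\Lambda(s,\Pi\times\tilde\Pi)$ with the non-negativity of the Dirichlet coefficients of $-L'/L(s,\Pi\times\tilde\Pi)$ supplied by the preceding lemma; this is the standard de la Vall\'ee Poussin--Jacquet--Shalika argument adapted to the Rankin--Selberg square of an isobaric representation, so routine calculations will be skipped.

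First, I would take the logarithmic derivative of both sides of
\[
\Lambda(s,\Pi\times\tilde\Pi) \;=\; (s(s-1))^{r}\,q_{\Pi\times\tilde\Pi}^{s/2}\,\gamma(s,\Pi\times\tilde\Pi)\,L(s,\Pi\times\tilde\Pi),
\]
writing $r:=r_{\Pi\times\tilde\Pi}$, and equate the result with the logarithmic derivative of the Hadamard product $e^{A+Bs}\prod_{\rho}(1-s/\rho)e^{s/\rho}$, which is legal because $\Lambda(s,\Pi\times\tilde\Pi)$ is entire of order $1$. The standard identity $\re(B) = -\sum_{\rho}\re(1/\rho)$ (from the functional equation), Stirling's formula for $\gamma'/\gamma$, and the Bushnell--Henniart conductor bound \eqref{conductorbound} consolidate the $B$, $\gamma'/\gamma$, and $\log q_{\Pi\times\tilde\Pi}$ contributions into $O(\log C(\Pi\times\tilde\Pi))$, giving, for real $\sigma\in(1,2]$,
\[
0 \;\leq\; -\frac{L'}{L}(\sigma,\Pi\times\tilde\Pi) \;=\; \frac{r}{\sigma-1} \;-\; \sum_{\rho}\re\frac{1}{\sigma-\rho} \;+\; O(\log C(\Pi\times\tilde\Pi)),
\]
where the left-hand inequality is exactly the preceding lemma.

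Next, every non-trivial zero $\rho$ satisfies $\re(\rho)<1<\sigma$, so each term $\re(1/(\sigma-\rho))$ is non-negative; I may therefore discard all zeros except $r+1$ hypothetical real zeros $\beta_1,\ldots,\beta_{r+1}\in[1-\delta,1)$, obtaining the lower bound $\sum_{\rho}\re(1/(\sigma-\rho)) \geq (r+1)/(\sigma-1+\delta)$. The previous inequality then forces
\[
\frac{r+1}{\sigma-1+\delta} \;\leq\; \frac{r}{\sigma-1} \;+\; O(\log C(\Pi\times\tilde\Pi)).
\]
Optimizing in $\sigma$ at the choice $\sigma-1 = (r+\sqrt{r(r+1)})\,\delta$ makes the left-hand side exceed the right-hand side by $\Theta(1/((r+1)\delta))$, contradicting the inequality whenever $\delta$ is smaller than an absolute constant times $1/((r+1)\log C(\Pi\times\tilde\Pi))$. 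Hence at most $r$ real zeros of $L(s,\Pi\times\tilde\Pi)$ lie in the asserted interval.

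Finally, I interpret the non-vanishing claim $L(1,\Pi\times\tilde\Pi)\neq 0$ as saying the pole at $s=1$ is of order exactly $r$ with non-zero leading Laurent coefficient. Writing the isobaric decomposition $\Pi=\boxplus_i\pi_i\otimes|\det|^{it_i}$ gives $L(s,\Pi\times\tilde\Pi)=\prod_{i,j}L(s+i(t_i-t_j),\pi_i\times\tilde\pi_j)$; each factor is either entire or has a simple pole at $s=1$ with non-zero residue by Jacquet--Shalika non-vanishing for unitary cuspidal Rankin--Selberg $L$-functions, and so the leading Laurent coefficient of the product at $s=1$ is a non-zero product of such residues. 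The main technical obstacle throughout is executing the first step in a manner uniform in $\Pi$ and packaging every auxiliary contribution cleanly as $O(\log C(\Pi\times\tilde\Pi))$ via \eqref{conductorbound}; the subsequent zero-counting optimization is elementary.
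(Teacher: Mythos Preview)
The paper does not supply its own proof of this proposition: it is quoted verbatim from \cite[Proposition~4.1]{thorner2021effective} and used as a black box. Your argument is the standard de la Vall\'ee Poussin--Jacquet--Shalika trick applied to the Rankin--Selberg square of an isobaric sum, and it is correct; in particular this is essentially how the cited result is established in \cite{thorner2021effective}, so there is nothing to compare.
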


We will also use the following zero-free region throughout the proof of Theorem \ref{master}.

\begin{thm}[{\cite[Corollary 4.2]{thorner2021effective}}]\label{cor42}
    Let $\pi \in \mathfrak{F}_m$ and $\pi' \in \mathfrak{F}_{m'}$. Suppose that both $\pi$ and $\pi'$ are self-dual. There exists a constant $\Cl[abcon]{thorner} > 0$ for which the following results hold.
    \begin{enumerate}
        \item $L(s, \pi) \neq 0$ in the region
        \begin{align*}
            \re(s) &\geq 1 - \frac{\Cr{thorner}}{m\log(C(\pi)(3 + |\im(s)|)}
        \end{align*}
        apart from at most one zero. If the exceptional zero exists, then it is real and simple.
        \item $L(s, \pi \times \pi') \neq 0$ in the region
        \begin{align*}
            \re(s) &\geq 1 - \frac{\Cr{thorner}}{(m + m')\log(C(\pi)C(\pi')(3 + |\im(s)|)^{\min(m, m')})}
        \end{align*}
        apart from at most one zero. If the exceptional zero exists, then it is real and simple.
    \end{enumerate}
\end{thm}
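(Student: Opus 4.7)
The plan is to apply Proposition \ref{prop41} to carefully chosen isobaric auxiliary sums $\Pi$, following de la Vall\'ee Poussin's classical strategy. Specifically, if $L(s,\pi)$ or $L(s,\pi\times\pi')$ has a zero $\rho=\beta+i\gamma$ with $\beta$ very close to $1$, I will construct a self-dual $\Pi$ so that $L(s,\Pi\times\tilde\Pi)$ acquires strictly more real zeros at $s=\beta$ than Proposition \ref{prop41} allows, forcing $\beta$ out of a quantitative region whose size is controlled by $\log C(\Pi\times\tilde\Pi)$.

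For part (1), if $\gamma\neq 0$, I would take
\[
\Pi := \pi \boxplus |\det|^{i\gamma} \boxplus |\det|^{-i\gamma},
\]
which is self-dual by the self-duality of $\pi$. A direct expansion of the nine Rankin--Selberg factors yields
\[
L(s,\Pi\times\tilde\Pi) = L(s,\pi\times\pi)\,\zeta(s)^{2}\,\zeta(s+2i\gamma)\zeta(s-2i\gamma)\,L(s+i\gamma,\pi)^{2}L(s-i\gamma,\pi)^{2},
\]
with pole at $s=1$ of order $r_{\Pi\times\tilde\Pi}=3$ (one from $L(s,\pi\times\pi)$ and two from $\zeta(s)^{2}$), while at $s=\beta$ the factors $L(s\pm i\gamma,\pi)^{2}$ contribute total multiplicity $\ge 4$ by the self-dual reflection $\beta-i\gamma$ of $\rho$. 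Since $4>3$, Proposition \ref{prop41} gives a contradiction unless $\beta$ lies outside $\re(s)\ge 1-c'/(4\log C(\Pi\times\tilde\Pi))$. The crucial point in the conductor bookkeeping is that the $1$-dimensional twists $|\det|^{\pm i\gamma}$ cancel in the $(2,2)$ and $(3,3)$ cross-terms, so no $m^{2}\log(3+|\gamma|)$ contribution arises; combining this with (\ref{conductorbound}) and the elementary estimate $\log C(\sigma\otimes|\det|^{it})\ll \log C(\sigma)+\dim\sigma\cdot\log(3+|t|)$ yields $\log C(\Pi\times\tilde\Pi)\ll m\log(C(\pi)(3+|\gamma|))$, matching the stated region. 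For $\gamma=0$, the simpler choice $\Pi:=\mathbbm{1}\boxplus\pi$ gives $r_{\Pi\times\tilde\Pi}=2$ with $L(s,\pi)^{2}$ as a factor, so two distinct real zeros (or one double zero) would violate the bound; the single exceptional real zero is therefore forced to be simple.

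For part (2), the analogous construction is
\[
\Pi := \pi \boxplus (\pi'\otimes|\det|^{i\gamma}) \boxplus (\pi'\otimes|\det|^{-i\gamma}),
\]
self-dual because both $\pi$ and $\pi'$ are. Expanding the nine Rankin--Selberg factors gives
\[
L(s,\Pi\times\tilde\Pi) = L(s,\pi\times\pi)\,L(s,\pi'\times\pi')^{2}\,L(s+i\gamma,\pi\times\pi')^{2}L(s-i\gamma,\pi\times\pi')^{2}\,L(s+2i\gamma,\pi'\times\pi')L(s-2i\gamma,\pi'\times\pi'),
\]
so $r_{\Pi\times\tilde\Pi}=3$ and the zero at $s=\beta$ has multiplicity $\ge 4$, again a contradiction. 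The shifted factor $L(s\pm 2i\gamma,\pi'\times\pi')$ contributes $(m')^{2}\log(3+|\gamma|)$ to $\log C(\Pi\times\tilde\Pi)$; swapping the roles of $\pi$ and $\pi'$ if $m<m'$ replaces this by $m^{2}\log(3+|\gamma|)$, which together with the other factors gives $\log C(\Pi\times\tilde\Pi)\ll (m+m')\log\!\bigl(C(\pi)C(\pi')(3+|\gamma|)^{\min(m,m')}\bigr)$, matching the statement. The case $\gamma=0$ is handled identically by $\Pi:=\pi\boxplus\pi'$, yielding a single simple exceptional real zero.

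The main obstacle is the conductor bookkeeping for $\log C(\Pi\times\tilde\Pi)$: one must correctly identify which of the nine Rankin--Selberg cross-terms have their $|\det|^{\pm i\gamma}$ twists cancel (eliminating $\gamma$-dependence) and which acquire shifts by $\pm 2i\gamma$, and then sum the contributions in the tightest possible form. The choice to twist the \emph{lower-rank} representation in part (2) is what produces the $\min(m,m')$ in the exponent, and the deliberate use of $|\det|^{\pm i\gamma}$ rather than twisted copies of $\pi$ itself in part (1) is what removes a spurious $m^{2}$ factor. A secondary point is verifying that the allowed real zero in each $\gamma=0$ case is automatically simple, which reduces to the same multiplicity comparison once $r_{\Pi\times\tilde\Pi}$ has been computed correctly.
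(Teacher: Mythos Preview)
The paper does not give its own proof of this statement: Theorem \ref{cor42} is imported verbatim as \cite[Corollary 4.2]{thorner2021effective}, so there is nothing in the present paper to compare against. Your reconstruction is nonetheless correct and follows exactly the template the paper uses for its own zero-free region lemmas (Lemma \ref{zerofree}, Proposition \ref{siegelzeroprop}, Lemma \ref{apnozero}): build a self-dual isobaric $\Pi$, factor $L(s,\Pi\times\tilde\Pi)$, compare the pole order $r_{\Pi\times\tilde\Pi}$ against the forced zero multiplicity at $s=\beta$, and invoke Proposition \ref{prop41} together with the conductor bound (\ref{conductorbound}).

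One small correction to your writeup: in part (1), your claim that ``the $1$-dimensional twists $|\det|^{\pm i\gamma}$ cancel in the $(2,2)$ and $(3,3)$ cross-terms'' is not accurate---those diagonal terms are precisely $\zeta(s\pm 2i\gamma)$, which \emph{do} carry $\gamma$-dependence. The cancellation you have in mind occurs in the \emph{off-diagonal} $(2,3)$ and $(3,2)$ terms, which give $\zeta(s)^{2}$. This does not affect the conductor estimate, since the $\zeta(s\pm 2i\gamma)$ factors contribute only $O(\log(3+|\gamma|))$ and are dominated by the $L(s\pm i\gamma,\pi)^{2}$ factors, which already contribute $O(m\log(3+|\gamma|))$; your final bound $\log C(\Pi\times\tilde\Pi)\ll m\log(C(\pi)(3+|\gamma|))$ is correct. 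Similarly in part (2), the observation that twisting the lower-rank factor is what yields the $\min(m,m')$ exponent is exactly right.
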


To remove the possibility of an exceptional zero, we will frequently use the following proposition.

\begin{prop}\label{siegelzeroprop}
Let $f$ be a holomorphic newform with complex multiplication, and let $\chi$ be the Dirichlet character such that $f=f\otimes\chi$. Let $\pi_f \in \mathfrak{F}_2$ be the representation corresponding with $f$. Let $\pi$ correspond with a cuspidal automorphic representation of $\mathfrak{F}_m$, and suppose that $\pi\otimes\chi\neq \pi$.  Then there exists an absolute constant $\Cl[abcon]{propB}$ such that $L(s,\pi_f \times \pi)$ has no zeroes in the interval 
\begin{align*}
    \re(s) &\geq 1 - \frac{\Cr{propB}}{m\log (C(\pi_f)C(\pi)(|\Im(s)|+3))}.
\end{align*}
\end{prop}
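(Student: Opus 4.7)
The plan is to apply Proposition \ref{prop41} to a judiciously chosen isobaric automorphic representation. Set $\Pi := \pi_f \boxplus \pi \boxplus (\pi \otimes \chi)$; the three components are pairwise inequivalent cuspidal representations, since $\pi_f$ is cuspidal and any coincidence among them would, via the CM identity $\pi_f = \pi_f \otimes \chi$, force $\pi = \pi \otimes \chi$, contradicting the hypothesis. Using $\pi_f = \pi_f \otimes \chi$ together with the twist identity $L(s, A \times (B \otimes \chi)) = L(s, (A \otimes \chi) \times B)$, I compute
\begin{align*}
L(s, \Pi \times \tilde{\Pi}) = L(s, \pi_f \times \pi_f)\, L(s, \pi_f \times \pi)^{2}\, L(s, \pi_f \times \tilde{\pi})^{2}\, L(s, \pi \times \tilde{\pi})^{2}\, L(s, \pi \times (\tilde{\pi} \otimes \chi))^{2}.
\end{align*}
This factorization has a pole of order $r_{\Pi \times \tilde{\Pi}} = 3$ at $s = 1$ (one from $L(s, \pi_f \times \pi_f)$, two from $L(s, \pi \times \tilde{\pi})^2$), and crucially $L(s, \pi_f \times \pi)$ appears with multiplicity two.

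Next, Proposition \ref{prop41} together with the conductor bound \eqref{conductorbound} yields an absolute constant $c$ such that $L(s, \Pi \times \tilde{\Pi})$ has at most three real zeros (counted with multiplicity) in the region $\re(s) \geq 1 - c/(m\log(C(\pi_f)C(\pi)(|\Im(s)|+3)))$. After shrinking $c$ if necessary, Theorem \ref{cor42} ensures each of the four auxiliary factors $L(s, \pi_f \times \pi_f)$, $L(s, \pi_f \times \tilde{\pi})$, $L(s, \pi \times \tilde{\pi})$, and $L(s, \pi \times (\tilde{\pi} \otimes \chi))$ is zero-free in this region, so every real zero of $L(s, \Pi \times \tilde{\Pi})$ therein must come from $L(s, \pi_f \times \pi)^{2}$ and contribute multiplicity at least two. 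Since the total real-zero count is at most three, at most one real zero of $L(s, \pi_f \times \pi)$ could possibly occur; a final tightening of constants, placing the target region strictly inside the zero-free region supplied by Theorem \ref{cor42} for $L(s, \pi_f \times \pi)$ itself, eliminates the solitary possibility and yields the claimed absolute constant $\Cr{propB}$.

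The main obstacle will be that last elimination step: Proposition \ref{prop41} combined with the multiplicity-two trick only yields ``at most one'' rather than ``no'' real zero of $L(s, \pi_f \times \pi)$, so passing from ``at most one'' to ``none'' requires careful calibration of the absolute constants in Proposition \ref{prop41}, Theorem \ref{cor42}, and the conductor bound. The key leverage is precisely the CM-induced multiplicity-two appearance of $L(s, \pi_f \times \pi)$ in the Rankin--Selberg square of $\Pi$: any hypothetical exceptional zero of $L(s, \pi_f \times \pi)$ would be forced to consume two of the three slots allowed by Proposition \ref{prop41} while simultaneously lying inside the region prohibited by Theorem \ref{cor42}, producing a contradiction once the absolute constants are appropriately tuned.
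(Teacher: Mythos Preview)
Your elimination step does not work. Theorem \ref{cor42} applied to $L(s,\pi_f\times\pi)$ (even granting the self-duality hypotheses it requires, which are not assumed for $\pi$) does not produce a zero-free region: it explicitly allows one real simple exceptional zero. Hence ``placing the target region strictly inside the zero-free region supplied by Theorem \ref{cor42}'' cannot rule out the single real zero your isobaric argument leaves open---that zero could be precisely the exceptional one Theorem \ref{cor42} permits. Your multiplicity-two trick gives $2k\le 3$, so $k\le 1$, and that is as far as it goes; there is no ``tightening of constants'' that closes this gap. Separately, because your $\Pi$ carries no $|\cdot|^{i\gamma}$ shift, a complex zero $\beta+i\gamma$ of $L(s,\pi_f\times\pi)$ with $\gamma\neq 0$ never manifests as a \emph{real} zero of $L(s,\Pi\times\tilde\Pi)$, so Proposition \ref{prop41} says nothing about it; you are tacitly relying on Theorem \ref{cor42} to handle complex zeros, but again that theorem needs both factors self-dual.

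The paper's choice $\Pi=\pi\boxplus(\pi\otimes\chi)\boxplus\tilde\pi_f\otimes|\cdot|^{-i\gamma}$ resolves both issues simultaneously. The shift converts a hypothetical zero $\beta+i\gamma$ of $L(s,\pi_f\times\pi)$ into a real zero $\beta$ of $L(s+i\gamma,\pi\times\pi_f)$, and by complex conjugation $\beta$ is simultaneously a real zero of $L(s-i\gamma,\tilde\pi\times\tilde\pi_f)$. Since both of these factors occur \emph{squared} in $L(s,\Pi\times\tilde\Pi)$, one obtains four real zeros at $s=\beta$ against a pole of order three, an immediate contradiction with Proposition \ref{prop41}. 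No auxiliary zero-free regions are needed, and no self-duality assumption on $\pi$ is used.
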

\begin{proof}

Consider the isobaric sum
\[
\Pi = \pi\boxplus \pi\otimes\chi\boxplus \widetilde{\pi}_f\otimes|\cdot|^{-i\gamma}.
\]
By hypothesis, $\pi_f\otimes\chi=\pi_f$ and $\pi\otimes\chi\neq\pi$.  This implies that $\widetilde{\pi}_f\otimes\bar{\chi}=\widetilde{\pi}_f$ and $\pi\otimes\bar{\chi}\neq\pi$.  It follows that $L(s,\Pi\times\widetilde{\Pi})$ factors as
\begin{align*}
&L(s,\pi_f\times\widetilde{\pi}_f) L(s,\pi\times\tilde{\pi})^2 L(s,\pi\times(\widetilde{\pi}\otimes\chi)) L(s,\pi\times(\widetilde{\pi}\otimes\overline{\chi})) L(s+i\gamma,\pi\times\pi_f)L(s-i\gamma,\widetilde{\pi}\times\widetilde{\pi}_f)\\
\times &L(s+i\gamma,\pi\times(\pi_f\otimes\chi))L(s-i\gamma,\widetilde{\pi}\times(\widetilde{\pi}_f\otimes\overline{\chi}))\\
=&L(s,\pi_f\times\widetilde{\pi}_f) L(s,\pi\times\tilde{\pi})^2 L(s,\pi\times(\widetilde{\pi}\otimes\chi)) L(s,\pi\times(\widetilde{\pi}\otimes\overline{\chi})) L(s+i\gamma,\pi\times\pi_f)^2 L(s-i\gamma,\widetilde{\pi}\times\widetilde{\pi}_f)^2.
\end{align*}
This $L$-function has a pole of order $3$ at $s = 1$ (due to the $L(s, \pi_f \times \tilde{\pi}_f)$ and $L(s, \pi \times \tilde{\pi})$ terms). Moreover, if $\beta$ is a real zero of $L(s+i\gamma,\pi\times\pi_f)$, then $\beta$ is also a real zero of $L(s-i\gamma,\widetilde{\pi}\times\widetilde{\pi}_f)$. Hence a zero $\beta + i\gamma$ of $L(s, \pi \times \pi_f)$ would imply a real zero of order $4$ of $L(s, \Pi \times \widetilde{\Pi})$. If $\beta + i\gamma$ is in the interval specified above, then this would now contradict Proposition \ref{prop41} by a simple conductor calculation using (\ref{conductorbound}). Note that the contribution from $C(\chi)$ may be neglected as the conductor of $\chi$ divides the square of the conductor of $\pi_f$, by \cite[Theorem A]{ramakrishnan2021conductor}. 
\end{proof}

Note that in both applications of this Proposition within this paper, the newform $f$ is taken to correspond to an elliptic curve with CM. Hence there are finitely many choices for $\chi$ (as it must be the Kronecker character of the field over which the elliptic curve has CM), and so the contribution of $C(\chi)$ above can be absorbed into the constant $\Cr{propB}$.

\subsection{Non-CM newforms twisted by Dirichlet characters}\label{snonCMzerofree}

\begin{lem}\label{zerofree}

Let $\pi\in\mathfrak{F}_{m+1}$ ($m\ge 1$), and $\chi$ be a primitive Dirichlet character. Suppose that $\pi$ is self dual. Then there exists an absolute constant $\Cl[abcon]{41}$ such that $L(s,\pi\otimes\chi)\neq 0$ for all $s$ in the region 
\begin{align*}
    \textnormal{Re}(s)\ge 1-\frac{\Cr{41}}{m\log(C(\pi)C(\chi)(3+|\textnormal{Im}(s)|))},
\end{align*}
apart from at most one Siegel zero. If such a Siegel zero exists, it is real and simple, and $\chi$ is self-dual.

\end{lem}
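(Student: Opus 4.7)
The plan is to apply Proposition~\ref{prop41} to a self-dual isobaric sum designed so that a putative zero of $L(s,\pi\otimes\chi)$ near the edge of the critical strip forces real zeros of the Rankin--Selberg square with multiplicity exceeding its pole order at $s=1$. For a hypothetical zero $\rho = \beta + i\gamma$ of $L(s,\pi\otimes\chi)$, I would set
\[
\Pi = \mathbbm{1} \boxplus (\pi\otimes\chi)\otimes|\det|^{i\gamma} \boxplus (\pi\otimes\bar\chi)\otimes|\det|^{-i\gamma},
\]
and denote its three components by $A_1, A_2, A_3$. The self-duality of $\pi$ yields $\widetilde{\pi\otimes\chi} = \pi\otimes\bar\chi$, whence $\tilde{A_2} = A_3$ and $\tilde{A_3} = A_2$, so $\tilde\Pi = \Pi$.

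Next I would factor $L(s,\Pi\times\tilde\Pi) = \prod_{i,j=1}^{3} L(s, A_i\times\tilde{A_j})$ and compute its pole order $r_{\Pi\times\tilde\Pi}$ at $s=1$. The factor $L(s,A_1\times\tilde{A_1}) = \zeta(s)$ contributes one simple pole. The factors $L(s,A_2\times\tilde{A_2})$ and $L(s,A_3\times\tilde{A_3})$—in which the opposing shifts cancel and $\chi\bar\chi$ is trivial—each reduce to $L(s,\pi\times\pi)$ up to finitely many local Euler factors at primes dividing the conductor of $\chi$, and each contributes a simple pole at $s=1$. In the generic case $A_2 \neq A_3$ (which holds whenever $\gamma \neq 0$ or $\chi$ is not self-dual), this gives $r_{\Pi\times\tilde\Pi} = 3$; in the degenerate case $A_2 = A_3$ (forcing both $\gamma = 0$ and $\chi = \bar\chi$), the four pairings among $A_2, A_3$ each contribute a simple pole and $r_{\Pi\times\tilde\Pi} = 5$. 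Meanwhile, $\rho$ produces four real zeros of $L(s,\Pi\times\tilde\Pi)$ at $s = \beta$: two from $L(s,A_1\times\tilde{A_3}) = L(s,A_2\times\tilde{A_1}) = L(s+i\gamma,\pi\otimes\chi)$, and two from $L(s,A_1\times\tilde{A_2}) = L(s,A_3\times\tilde{A_1}) = L(s-i\gamma,\pi\otimes\bar\chi)$, the latter using that the functional equation transports the zero of $L(s,\pi\otimes\chi)$ at $\beta+i\gamma$ to a zero of $L(s,\pi\otimes\bar\chi)$ at $\beta-i\gamma$.

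Applying Proposition~\ref{prop41} together with the conductor bound $\log C(\Pi\times\tilde\Pi) \ll m\log(C(\pi)C(\chi)(3+|\gamma|))$ supplied by \eqref{conductorbound}, the comparison $4 > 3 = r_{\Pi\times\tilde\Pi}$ gives an immediate contradiction in the generic case, forcing $\rho$ out of the claimed zero-free region; in the degenerate case $4 \leq 5$ is consistent with a single Siegel zero, but two such zeros would give $8 > 5$, so at most one survives. Because this lone exception can arise only when $A_2 = A_3$, it is automatically real, simple, and accompanied by $\chi$ self-dual, matching the Siegel-zero clause. The principal technical hurdle will be verifying that the remaining cross factors $L(s,A_2\times\tilde{A_3})$ and $L(s,A_3\times\tilde{A_2}) = L(s\mp 2i\gamma, \pi\times\pi\otimes\chi^{\pm 2})$ do not contribute unanticipated poles at $s=1$: such a pole arises precisely under the self-twist condition $\pi\otimes\chi^2\cong\pi$, a degenerate configuration that must be handled separately (for instance, by appealing directly to Theorem~\ref{cor42}(2) whenever $\chi$ itself is self-dual, in which case the lemma reduces to an already-established Rankin--Selberg zero-free region).
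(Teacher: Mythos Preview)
Your approach and the paper's share the same skeleton---build a self-dual isobaric sum $\Pi$ so that a zero of $L(s,\pi\otimes\chi)$ near $s=1$ forces more real zeros of $L(s,\Pi\times\tilde\Pi)$ than its pole order allows, then invoke Proposition~\ref{prop41}---but the choice of $\Pi$ differs. You take $\Pi=\mathbbm{1}\boxplus(\pi\otimes\chi)|\det|^{i\gamma}\boxplus(\pi\otimes\bar\chi)|\det|^{-i\gamma}$ on $\GL_{2m+3}$; the paper instead uses $\Pi=\chi|\cdot|^{it}\boxplus\bar\chi|\cdot|^{-it}\boxplus\pi$ on $\GL_{m+3}$, keeping $\pi$ untwisted and putting the characters in the $\GL_1$ slots. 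Its factorization is
\[
L(s,\Pi\times\tilde\Pi)=\zeta(s)^2\,L(s,\pi\times\tilde\pi)\,L(s+it,\pi\otimes\chi)^2\,L(s-it,\pi\otimes\bar\chi)^2\,L(s+2it,\psi)\,L(s-2it,\bar\psi),
\]
with $\psi$ the primitive character inducing $\chi^2$.

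The paper's smaller $\Pi$ resolves the exact ``technical hurdle'' you flag, and your handling of that hurdle has a gap. Your dangerous cross factor is $L(s,A_2\times\tilde{A_3})=L(s,A_2\times A_2)$, which acquires a pole at $s=1$ whenever $A_2$ is self-dual, i.e.\ $\gamma=0$ and $\pi\otimes\chi^2\cong\pi$. This last condition is \emph{not} equivalent to $\chi=\bar\chi$: if $\pi$ admits a nontrivial self-twist by $\chi^2$ with $\chi$ complex, you still land in the degenerate $5$-pole configuration and your argument then permits a real exceptional zero, yet the lemma asserts none exists for complex $\chi$. Your proposed patch via Theorem~\ref{cor42} covers only $\chi$ real and leaves this case open. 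In the paper's factorization the corresponding cross terms are the Dirichlet $L$-functions $L(s\pm2it,\psi^{\pm1})$; once $\chi$ is complex, $\psi$ is automatically nontrivial, so no extra poles arise regardless of self-twists on $\pi$, and the $4>3$ contradiction is immediate. A secondary cost of your larger $\Pi$ is the conductor: the pairings among $A_2,A_3$ are $\GL_{m+1}\times\GL_{m+1}$ convolutions of representations with conductor $\ll C(\pi)C(\chi)^{m+1}$, so \eqref{conductorbound} gives $\log C(\Pi\times\tilde\Pi)\ll m\log C(\pi)+m^2\log C(\chi)$ rather than the $m\log(C(\pi)C(\chi))$ needed for the region as stated.
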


\begin{proof}

When $\chi$ is real, this is a special case of Theorem \ref{cor42}. Hence assume $\chi$ is not real. Suppose for the sake of contradiction that $\rho=\sigma+it$ is a zero in that region. Let $\Pi=\chi|\cdot|^{it}\boxplus \overline{\chi} |\cdot|^{-it}\boxplus \pi$. If $\psi$ denotes the primitive Dirichlet character that induces $\chi^2$, then
$$
L(s,\Pi\times \overline{\Pi})=\zeta(s)^2 L(s,\pi\times\overline{\pi})L(s+it, \pi\otimes\chi)^2L(s-it, \pi\otimes\overline{\chi})^2 L(s+2it, \psi)L(s-2it, \overline{\psi}),
$$
which has exactly 3 poles at $s=1$. However, since $\pi$ is self-dual, both $L(s+it, \pi\otimes\chi)$ and $L(s-it, \pi\otimes\overline\chi)$ have a zero at $\sigma$, so thus $L(s,\Pi\times \tilde\Pi)$ has at least four zeroes at $\sigma$. Combined with the bound on the conductor in (\ref{conductorbound}), this now contradicts Proposition \ref{prop41}. Note that this proof works even when $t = 0$; hence a Siegel zero cannot exist in this case.
\end{proof}

\begin{lem}\label{apnozero}

Consider a non-CM elliptic curve $E$, and let $f$ be the cusp form corresponding to $E$. Let $\pi_f$ be the representation corresponding to $f$, and let $\pi = \textup{Sym}^m \pi_f \in \mathfrak{F}_{m+1}$. Then we may replace $\Cr{41}$ in Lemma \ref{zerofree} with an absolute constant $\Cl[abcon]{42}$ such that Lemma \ref{zerofree} holds true for $\pi$ without the possibility of a Siegel zero.

\end{lem}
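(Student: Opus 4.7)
The plan is to remove the possibility of an exceptional Siegel zero in Lemma \ref{zerofree} when $\pi=\mathrm{Sym}^{m}\pi_{f}$ with $E$ non-CM. Examining the proof of Lemma \ref{zerofree}, the only obstruction is a simple real zero $\beta$ of $L(s,\pi\otimes\chi)$ arising when $\chi$ is a primitive real (quadratic) Dirichlet character, so it suffices to rule out such $\beta$ for this class of $\chi$.

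Two structural consequences of the non-CM hypothesis will be essential. First, by the work of Newton and Thorne \cite{newton2021symmetric,thorne2021symmetric}, every symmetric power $\mathrm{Sym}^{k}\pi_{f}\in\mathfrak{F}_{k+1}$ ($k\geq 1$) is cuspidal and unitary. Second, since $\pi_{f}$ is non-CM, it admits no nontrivial Dirichlet self-twist, and this property propagates to every $\mathrm{Sym}^{k}\pi_{f}$; hence $\mathrm{Sym}^{k}\pi_{f}\otimes\chi\not\cong\mathrm{Sym}^{k}\pi_{f}$ for all $k\geq 1$ and every nontrivial primitive $\chi$. Together these prevent any unwanted poles in the Rankin-Selberg products considered below.

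The heart of the proof is an isobaric construction in the spirit of Proposition \ref{siegelzeroprop}. One forms an isobaric automorphic representation $\Pi$ out of summands drawn from $\{\mathbbm{1},\pi,\pi\otimes\chi\}$, augmented by auxiliary factors such as $\pi_{f}$ and $\mathrm{Sym}^{m-1}\pi_{f}\otimes\chi$ when needed. Expanding $L(s,\Pi\times\tilde{\Pi})=\prod_{i,j}L(s,\Pi_{i}\times\tilde{\Pi}_{j})$ via the Clebsch-Gordan decomposition
\[
\mathrm{Sym}^{a}\pi_{f}\times\mathrm{Sym}^{b}\pi_{f}=\bigboxplus_{j=0}^{\min(a,b)}\mathrm{Sym}^{a+b-2j}\pi_{f},
\]
one arranges that $L(s,\pi\otimes\chi)$ appears as a factor of $L(s,\Pi\times\tilde{\Pi})$ with multiplicity strictly greater than the order of its pole at $s=1$. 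A hypothetical Siegel zero $\beta$ then produces more real zeros of $L(s,\Pi\times\tilde{\Pi})$ in the critical region than Proposition \ref{prop41} allows, yielding a contradiction.

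The main obstacle is the tight combinatorial balance between pole order and zero multiplicity in the choice of $\Pi$. For even $m$ the simple choice $\Pi=\mathbbm{1}\boxplus\pi\boxplus(\pi\otimes\chi)$ succeeds, because $L(s,\pi\otimes\chi)=L(s,\mathrm{Sym}^{m}\pi_{f}\otimes\chi)$ appears directly as the $j=m/2$ summand of $L(s,(\pi\times\pi)\otimes\chi)$. For odd $m$ this direct appearance fails (the summands of $\pi\times\pi$ are only even symmetric powers), and one must instead exploit factorizations such as
\[
L(s,\pi_{f}\times\mathrm{Sym}^{m-1}\pi_{f}\otimes\chi)=L(s,\pi\otimes\chi)\cdot L(s,\mathrm{Sym}^{m-2}\pi_{f}\otimes\chi)
\]
by augmenting $\Pi$ with $\pi_{f}$ and $\mathrm{Sym}^{m-1}\pi_{f}\otimes\chi$; engineering the correct set of summands so that the gained zeros outweigh the added diagonal poles, without violating the non-self-twist hypothesis, is the central technical task.
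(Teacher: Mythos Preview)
Your overall strategy---build an isobaric $\Pi$, expand $L(s,\Pi\times\tilde\Pi)$ via Clebsch--Gordan, and force more copies of $L(s,\pi\otimes\chi)$ than the pole order allows---is exactly what the paper does. However, you split into parity cases and leave the odd case unfinished, whereas the paper's choice of $\Pi$ handles all $m$ at once and avoids the difficulty you flag.

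The paper takes
\[
\Pi_m=\mathbbm{1}\boxplus \mathrm{Sym}^{2}\pi_f\boxplus(\mathrm{Sym}^{m}\pi_f\otimes\chi),
\]
replacing your summand $\pi=\mathrm{Sym}^m\pi_f$ by the fixed $\mathrm{Sym}^{2}\pi_f$. The point is that the cross term
\[
\mathrm{Sym}^{2}\pi_f\times(\mathrm{Sym}^{m}\pi_f\otimes\chi)=(\mathrm{Sym}^{m+2}\pi_f\boxplus\mathrm{Sym}^{m}\pi_f\boxplus\mathrm{Sym}^{m-2}\pi_f)\otimes\chi
\]
already contains $\mathrm{Sym}^{m}\pi_f\otimes\chi$ regardless of the parity of $m$; this is precisely the ``direct appearance'' you noted fails for $\pi\times\pi$ when $m$ is odd. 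One then reads off that $L(s,\Pi_m\times\tilde\Pi_m)$ has a pole of order $3$ at $s=1$ (the three diagonal blocks each contribute one $\zeta$-factor, and no off-diagonal block has a pole since $E$ is non-CM so $\mathrm{Sym}^k\pi_f\otimes\chi\not\cong\mathrm{Sym}^k\pi_f$), while $L(s,\mathrm{Sym}^m\pi_f\otimes\chi)$ occurs to the fourth power (twice from $\mathbbm{1}\times(\mathrm{Sym}^m\pi_f\otimes\chi)$ and its transpose, twice from the cross term above and its transpose). Proposition~\ref{prop41} then gives the contradiction.

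Your even-$m$ construction $\Pi=\mathbbm{1}\boxplus\pi\boxplus(\pi\otimes\chi)$ is correct and gives the same $3$-versus-$4$ count, but the odd-$m$ augmentation you sketch (adding $\pi_f$ and $\mathrm{Sym}^{m-1}\pi_f\otimes\chi$) introduces two new diagonal poles, pushing the pole order to $5$; you would then need to manufacture at least six copies of $L(s,\pi\otimes\chi)$, and it is not clear your proposed summands achieve this. The paper's use of $\mathrm{Sym}^2\pi_f$ as the auxiliary piece is the missing idea that makes the argument uniform and short.
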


\begin{proof}

We only need to consider the case when $\chi$ is self-dual. Consider the isobaric representation $\Pi_m=\one \boxplus \text{Sym}^2 \pi_f \boxplus (\text{Sym}^m \pi_f\otimes\chi)$. Recall the identities
\begin{align*}
    L(s,\Sym^m \pi_f \times \Sym^m \pi_f)&= \zeta(s) \prod_{j=1}^m L(s,\Sym^{2j}\pi_f),\\
    \Sym^m \pi_f \times \Sym^2 \pi_f &= \boxplus_{j=0}^2 \Sym^{m + 2 - 2j} \pi_f,\\
\end{align*}
These identities imply that
\begin{align*}
    L(s, \Pi_m\times\overline\Pi_m)=\zeta(s)^3L(s,\text{Sym}^m \pi_f \otimes\chi)^4&L(s,\text{Sym}^2\pi_f)^3L(s,\text{Sym}^4 \pi_f)L(s,\text{Sym}^{m+2} \pi_f\otimes\chi)^2 \\&\times L(s,\text{Sym}^{m-2} \pi_f\otimes\chi)^2\prod_{j=1}^{m}L(s,\text{Sym}^{2j}\pi_f).
\end{align*}
Now, this function has a pole of order 3 at $s=1$. If  $L(s,\text{Sym}^m \pi_f\otimes\chi)$ has a Siegel zero at $\rho$, $L(s, \Pi_m\times\overline\Pi_m)$ will have a zero of order at least 4 at $\rho$. Combined with the bound on the conductor in (\ref{conductorbound}), this would again contradict Proposition \ref{prop41}. \end{proof}

\subsection{CM newforms twisted by Dirichlet characters}\label{sCMzerofree}

Consider an elliptic curve $E$ which has complex multiplication over an imaginary quadratic field $K$. Let $m \geq 1$ be an integer, $\xi$ denote the primitive Gr\"{o}ssencharacter which satisfies $L(s, E) = L(s, \xi)$, $f_m$ denote the cusp form which induces $\xi_m$ (where $\xi_m$ is as defined in Section \ref{secinfo}), and $\pi_{f_m} \in \mathfrak{F}_2$ denote the representation corresponding to $f_m$.  Finally, let $\chi$ be a primitive Dirichlet character that induces a character $\bmod \thinspace q$.
\begin{lem}\label{CMzerofree}
     There exists an absolute constant $\Cl[abcon]{cmapzerofree} > 0$ such that $L(s, \pi_{f_m} \otimes \chi)$ has no zeroes in the region
    \begin{align*}
        \re(s) &\geq 1 - \frac{\Cr{cmapzerofree}}{\log(N_Eqm(|3+\im(s)|))}.
    \end{align*}
\end{lem}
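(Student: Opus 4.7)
The plan is to derive this directly from Proposition \ref{siegelzeroprop}, taking the CM newform of that proposition to be $f_m$ and the cuspidal companion representation to be $\chi$ itself, viewed as a one-dimensional cuspidal representation of $\GL_1(\mathbb{A})$. First I would verify the hypotheses of Proposition \ref{siegelzeroprop}. Since $\xi_m$ is primitive by construction in Section \ref{secinfo}, Theorem \ref{autoinduct} guarantees that $f_m$ is a holomorphic newform. Because $\xi_m$ is a Hecke Gr\"ossencharacter over the imaginary quadratic field $K$, the resulting newform $f_m$ has complex multiplication by $K$, which means $f_m \otimes \chi_K = f_m$ where $\chi_K$ is the Kronecker character of $K$. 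So the character playing the role of ``$\chi$'' in Proposition \ref{siegelzeroprop} is, in our setting, $\chi_K$, and this choice is the same for every $m$.

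Next I would check the non-invariance hypothesis $\pi \otimes \chi_K \neq \pi$ of Proposition \ref{siegelzeroprop}. With $\pi = \chi$ (our Dirichlet character in $\mathfrak{F}_1$), this reads $\chi \otimes \chi_K \neq \chi$, equivalently $\chi_K \neq 1$, which holds automatically since $K$ is imaginary quadratic. Note this also covers the case when our Dirichlet character $\chi$ is trivial, since by the convention in Section \ref{sinfo} every Dirichlet character (including the trivial one) is a cuspidal representation of $\GL_1(\mathbb{A})$. Applying Proposition \ref{siegelzeroprop} and using $L(s,\pi_{f_m} \times \chi) = L(s,\pi_{f_m} \otimes \chi)$, with the ``$m$'' factor in the denominator of Proposition \ref{siegelzeroprop} equal to $1$ (the dimension of $\chi$), yields
\[
L(s,\pi_{f_m} \otimes \chi) \neq 0 \quad \text{in} \quad \re(s) \geq 1 - \frac{c}{\log\bigl(C(\pi_{f_m}) C(\chi)(|\Im(s)|+3)\bigr)}
\]
for some absolute constant $c > 0$.

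To finish, I would estimate the two conductors. The level of $f_m$ is $|D_K|\mathrm{N}\mathfrak{m}_m$, which divides $N_E$ by the remark in Section \ref{secinfo}, while the weight of $f_m$ is $m+1$. A Stirling-type computation exactly analogous to the derivation of \eqref{conductorbound2} then gives $\log C(\pi_{f_m}) \ll \log(N_E m)$. Since $\chi$ is a primitive Dirichlet character of modulus $q$, one has $\log C(\chi) \ll \log q$. Substituting these bounds into the zero-free region obtained above produces the desired estimate
\[
\re(s) \geq 1 - \frac{\Cr{cmapzerofree}}{\log(N_E q m(3+|\Im(s)|))}
\]
for a suitable absolute constant $\Cr{cmapzerofree} > 0$.

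I do not anticipate any serious obstacle: the argument is essentially a direct application of Proposition \ref{siegelzeroprop}, and the key observation that makes the bound uniform in $m$ is that $f_m$ is CM by the single character $\chi_K$ for every $m$, because $\xi$ and hence $\xi^m$ are already defined over $K$. The only minor bookkeeping is the conductor estimate $\log C(\pi_{f_m}) \ll \log(N_E m)$, which follows from Stirling exactly as in Section \ref{ssympowerinfo}.
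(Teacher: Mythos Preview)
Your argument is correct, and it takes a genuinely different route from the paper. The paper treats $f_m \otimes \chi$ as a single $\GL_2$ cusp form: it invokes \cite[Theorem 7.5]{iwaniec1997topics} to pin down the level, applies the standard $\GL_2$ zero-free region \cite[Theorem 5.10]{iwaniec2021analytic}, and then removes the possible exceptional zero by appealing to Hoffstein--Ramakrishnan \cite[Theorem C]{hoffstein1995siegel}. You instead stay entirely within the paper's own machinery, applying Proposition \ref{siegelzeroprop} with $\pi=\chi\in\mathfrak{F}_1$; the crucial observation that $f_m\otimes\chi_K=f_m$ for every $m$ (since $\xi_m$ lives over $K$) is exactly what lets you invoke that proposition uniformly in $m$, and the non-invariance hypothesis $\chi\cdot\chi_K\neq\chi$ is automatic because $\chi_K$ is nontrivial. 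Your approach has the advantage of being self-contained and of eliminating the exceptional zero in one stroke, while the paper's approach makes the dependence on classical $\GL_2$ inputs more transparent. One small wording point: $\chi$ is primitive of conductor dividing $q$, not necessarily of modulus exactly $q$, but this does not affect your bound $\log C(\chi)\ll\log q$.
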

\begin{proof}
    By \cite[Theorem 7.5]{iwaniec1997topics}, there exists an integer $M \leq N_Eq^2$ such that $f_m \otimes \chi$ is a cusp newform in $S_{m+1}(\Gamma_0(M), \chi_m\chi^2)$ (where $\chi_m$ is as defined in Section \ref{secinfo}). Hence we have that $C(\pi_{f_m} \otimes \chi) \ll N_Eq^2m^2$. Moreover, by \cite[Section 5.11]{iwaniec2021analytic}, we have that the Rankin-Selberg convolutions $L(s, \pi_{f_m} \otimes \chi \times \pi_{f_m} \otimes \chi)$ and $L(\pi_{f_m} \otimes \chi \times \overline{\pi_{f_m} \otimes \chi})$ both exist, with the former being entire and the latter having a simple pole at $s = 1$. Now, using the information above and from Section \ref{secinfo}, we may apply \cite[Theorem 5.10]{iwaniec2021analytic}, which proves the above except for the possibility of an exceptional zero. We may now remove this possibility using \cite[Theorem C]{hoffstein1995siegel}.
\end{proof}

\subsection{Rankin-Selberg convolutions of a CM and Non-CM newform}\label{sCMnonCMzerofree}

Consider two elliptic curves $E, E'$, the former having complex multiplication over an imaginary quadratic field $K$, and the latter without complex multiplication over any imaginary quadratic field. Let $\xi$ denote the primitive Gr\"{o}ssencharacter which satisfies $L(s, E) = L(s, \xi)$, $g_m$ (instead of $f_m$) denote the cusp form which induces $\xi_m$, and $\pi_{g_m} \in \mathfrak{F}_2$ denote the representation corresponding to $g_m$. Also let $f'$ denote the cusp form which corresponds to $E'$, and let $\pi_{f'} \in \mathfrak{F}_2$ be the representation which corresponds to $f'$.

\begin{lem}\label{zerofreegrossen}
There exists an absolute constant $\Cl[abcon]{461} > 0$ such that for all integers $1\le m, m' \le M$, \\ $L(s, \pi_{g_{m}} \times \Sym^{m'} \pi_{f'}) \neq 0$ in the region 
\begin{align*}
    \textup{Re}(s) &\geq 1- \frac{\Cr{461}}{M^2\log(N_{E}N_{E'}(3+|\textup{Im}(s)|))}.
\end{align*}
\end{lem}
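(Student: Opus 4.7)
The plan is to follow the isobaric-sum template of Lemma \ref{zerofree}, replacing the Dirichlet character there with the two-dimensional cuspidal representation $\pi_{g_m}$. For a real parameter $t$, consider the isobaric automorphic representation
$$\Pi := \pi_{g_m}\otimes|\det|^{it}\,\boxplus\,\tilde{\pi}_{g_m}\otimes|\det|^{-it}\,\boxplus\,\Sym^{m'}\pi_{f'}$$
of $\GL_{m'+5}(\mathbb{A}_\mathbb{Q})$. Since $\Sym^{m'}\pi_{f'}$ is self-dual, expanding
$L(s,\Pi\times\tilde\Pi)$ as a product over the nine constituent pairs and doing the bookkeeping shows that, in the generic case (when $\pi_{g_m}$ is not self-dual), the pole at $s=1$ has order exactly $r_{\Pi\times\tilde\Pi}=3$: two contributions from $L(s,\pi_{g_m}\times\tilde{\pi}_{g_m})$ and one from $L(s,\Sym^{m'}\pi_{f'}\times\Sym^{m'}\pi_{f'})$. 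Observe that $L(s,\pi_{g_m}\times\Sym^{m'}\pi_{f'})$ is entire, since $\pi_{g_m}$ arises from a Gr\"ossencharacter of the CM field $K$ while $\pi_{f'}$ is non-CM, so $\pi_{g_m}\not\cong\widetilde{\Sym^{m'}\pi_{f'}}=\Sym^{m'}\pi_{f'}$.

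Suppose for contradiction that $L(s_0,\pi_{g_m}\times\Sym^{m'}\pi_{f'})=0$ at some $s_0=\beta+i\gamma$ in the claimed zero-free region. Setting $t=\gamma$, the factor $L(s+it,\pi_{g_m}\times\Sym^{m'}\pi_{f'})$ vanishes at the real point $s=\beta$ and appears twice in the factorisation of $L(s,\Pi\times\tilde\Pi)$. Using the self-duality of $\Sym^{m'}\pi_{f'}$ together with the reflection identity $L(s,\tilde\pi\times\pi')=\overline{L(\bar s,\pi\times\widetilde{\pi'})}$, the factor $L(s-it,\tilde{\pi}_{g_m}\times\Sym^{m'}\pi_{f'})$ also vanishes at $s=\beta$ and likewise appears twice. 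Thus $L(s,\Pi\times\tilde\Pi)$ has a real zero of order at least $4$ at $s=\beta$, contradicting Proposition \ref{prop41}, which permits at most $r_{\Pi\times\tilde\Pi}=3$ real zeros in the interval $s\geq 1-\Cr{prop41}/(4\log C(\Pi\times\tilde\Pi))$.

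To extract the explicit region, I will bound $\log C(\Pi\times\tilde\Pi)$. Since $\mathfrak{m}_m\mid\mathfrak{m}$, the level of $g_m$ divides $N_E$, so $\log C(\pi_{g_m},t)\ll\log(N_Em(|t|+3))$; combined with $\log C(\Sym^{m'}\pi_{f'})\ll m'\log(N_{E'}m')$ from \eqref{conductorbound2} and the Bushnell--Henniart--Brumley bound \eqref{conductorbound} applied to each of the nine pairs, one obtains $\log C(\Pi\times\tilde\Pi)\ll m'\log(N_EN_{E'}mm'(|\gamma|+3))$. For $m,m'\leq M$ this is majorised by $M^2\log(N_EN_{E'}(3+|\gamma|))$, which produces the denominator in the claim.

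The one subtlety is when $\pi_{g_m}$ is itself self-dual (as happens at $m=1$, since $g_1$ corresponds to the elliptic curve $E/\mathbb{Q}$ with trivial nebentypus): at $t=0$ the factors $L(s,\pi_{g_m}\times\pi_{g_m})$ and $L(s,\tilde{\pi}_{g_m}\times\tilde{\pi}_{g_m})$ acquire poles at $s=1$, and the isobaric-sum argument only rules out non-real zeros. For a potential real zero in this case I invoke Theorem \ref{cor42}(2) directly, which applies because both $\pi_{g_m}$ and $\Sym^{m'}\pi_{f'}$ are self-dual and yields the required region up to at most one simple real exceptional zero. This exceptional zero is then removed by Proposition \ref{siegelzeroprop} applied with $f=g_m$ (a CM newform fixed by $\chi_K$) and $\pi=\Sym^{m'}\pi_{f'}$: since $\pi_{f'}$ is non-CM, its Hecke eigenvalues are nonzero on a positive density of $\chi_K$-inert primes, so $\Sym^{m'}\pi_{f'}\otimes\chi_K\not\cong\Sym^{m'}\pi_{f'}$. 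The main technical obstacle is the multiplicative bookkeeping of the nine-factor expansion of $L(s,\Pi\times\tilde\Pi)$ and verifying the correct pole/zero balance in both the self-dual and non-self-dual subcases.
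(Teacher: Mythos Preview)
Your proof is correct, but it takes a longer route than necessary because you miss a key structural fact: $\pi_{g_m}$ is self-dual for \emph{every} $m\geq 1$, not just $m=1$. Since $E$ is defined over $\mathbb{Q}$, the Gr\"ossencharacter satisfies $\bar{\xi}=\xi^c$ (complex conjugation on $K$), hence $\bar{\xi}_m=\xi_m^c$, and automorphic induction from $K$ to $\mathbb{Q}$ is invariant under the Galois action, so $\tilde{\pi}_{g_m}\cong\pi_{g_m}$. Consequently your ``generic non-self-dual case'' is vacuous, and the entire lemma is covered by what you call the ``subtlety.''

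The paper exploits this directly: because both $\pi_{g_m}$ and $\Sym^{m'}\pi_{f'}$ are self-dual, Theorem~\ref{cor42}(2) applies out of the box and gives the full zero-free region apart from a possible real simple exceptional zero; Proposition~\ref{siegelzeroprop} (with $f=g_m$ and $\pi=\Sym^{m'}\pi_{f'}$, exactly as you verify) removes that exception. The conductor calculation via \eqref{conductorbound2} and Section~\ref{secinfo} then yields the stated denominator. Your isobaric-sum construction with $\Pi=\pi_{g_m}|\det|^{it}\boxplus\tilde{\pi}_{g_m}|\det|^{-it}\boxplus\Sym^{m'}\pi_{f'}$ is valid for non-real zeros and essentially reproves the relevant part of Theorem~\ref{cor42} from scratch, but it is not needed once self-duality is recognised. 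Your verification that $\Sym^{m'}\pi_{f'}\otimes\chi_K\not\cong\Sym^{m'}\pi_{f'}$ (required for Proposition~\ref{siegelzeroprop}) is correct and is a detail the paper leaves implicit.
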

\begin{proof}
    When $\im(s) \neq 0$, we may apply Theorem \ref{cor42}, while in the case where $\im(s) = 0$, we may apply Proposition \ref{siegelzeroprop}. In both cases, the result follows from a conductor calculation using (\ref{conductorbound2}) and the properties discussed in Section \ref{secinfo}.
\end{proof}

\subsection{Rankin-Selberg convolutions of two CM newforms}\label{sCMCMzerofree} 
Consider two twist-inequivalent elliptic curves $E, E'$ having complex multiplication over two imaginary quadratic fields $K, K'$ (respectively). Let $m, m' \geq 1$ be integers. Consider the Gr\"{o}ssencharacters $\xi, \xi'$ corresponding to $E, E'$, let $f = f_m, f' = f'_{m'}$ be the cusp forms corresponding to $\xi_m, \xi'_{m'}$ (as defined in \ref{secinfo}), and let $\pi_f, \pi_{f'} \in \mathfrak{F}_2$ denote the representations corresponding to $f, f'$ respectively. Finally, let $\mathfrak{q} = N_EN_{E'}mm'$.
\begin{lem}\label{CMCMsiegelzero}
    There exists an absolute constant $\Cl[abcon]{CMCMzerofree} > 0$ such that $L(s, \pi_f \times \pi_{f'}) \neq 0$ in the region
\begin{align*}
    \textup{Re}(s) &\geq 1 - \frac{\Cr{CMCMzerofree}}{\log(\mathfrak{q}(3 + |\textup{Im}(s)|))}.
\end{align*}
\end{lem}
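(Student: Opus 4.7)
The plan is to apply Theorem \ref{cor42}(2) to $L(s,\pi_f\times\pi_{f'})$ for the baseline zero-free region modulo one potential real simple exceptional zero, and then remove that exceptional zero using the CM structure of $f$ and $f'$, splitting into cases based on whether the CM fields $K$ and $K'$ coincide.

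\emph{Initial zero-free region.} Since $E$ and $E'$ have CM, the representations $\pi_f,\pi_{f'}\in\mathfrak{F}_2$ are self-dual, so Theorem \ref{cor42}(2) yields a region
\begin{align*}
\re(s)\;\geq\;1\;-\;\frac{c}{\log\!\left(C(\pi_f)\,C(\pi_{f'})\,(3+|\im(s)|)^2\right)}
\end{align*}
in which $L(s,\pi_f\times\pi_{f'})$ has at most one zero, necessarily real and simple. From the construction in Section \ref{secinfo}, $f_m$ has weight $m+1$ and level dividing $N_E$, giving $\log C(\pi_f)\ll\log(N_Em)$; similarly $\log C(\pi_{f'})\ll\log(N_{E'}m')$. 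Substituting and absorbing constants into $\mathfrak{q}=N_EN_{E'}mm'$ produces the region asserted in the lemma, modulo the possible exceptional zero.

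\emph{Removing the exceptional zero.} When $K\neq K'$, I apply Proposition \ref{siegelzeroprop} with $f$ playing the role of the CM newform (self-twist character $\chi=\chi_K$) and $\pi=\pi_{f'}$. Since an elliptic curve over $\mathbb{Q}$ admits at most one CM field and $f'$ has CM by $K'\neq K$, we have $\pi_{f'}\otimes\chi_K\neq\pi_{f'}$, and Proposition \ref{siegelzeroprop} directly supplies the required no-exception conclusion. When $K=K'$ the proposition does not apply directly (since then $\pi_{f'}\otimes\chi_K=\pi_{f'}$), so I instead use that $\pi_f$ and $\pi_{f'}$ are automorphic inductions of Grössencharacters $\xi_m,\xi'_{m'}$ of $K$, giving the factorization
\begin{align*}
L(s,\pi_f\times\pi_{f'})\;=\;L(s,\xi_m\xi'_{m'})\cdot L(s,\xi_m\bar\xi'_{m'})
\end{align*}
as Hecke $L$-functions over $K$. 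The first factor is an $L$-function of a Grössencharacter of infinity type $m+m'\geq 2>0$, so its automorphic induction is a cuspidal self-dual $\GL_2/\mathbb{Q}$ representation for which the classical Landau / isobaric-sum argument (as in Lemma \ref{zerofree}) provides a zero-free region without Siegel zero. The same argument handles the second factor whenever $m\neq m'$; when $m=m'$, the infinity type vanishes and $\xi_m\bar\xi'_m$ is finite order, nontrivial by the twist-inequivalence of $E$ and $E'$ (otherwise $\xi^m$ and $\xi'^m$ would coincide up to Galois action, forcing a twist relation between $E$ and $E'$). For this remaining subcase, a Hoffstein-Lockhart / Hoffstein-Ramakrishnan style exclusion of Siegel zeros for the finite-order Hecke character (in the same spirit as the closing step of Lemma \ref{CMzerofree}) finishes the argument.

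\emph{Main obstacle.} The hard part is the $K=K'$ subcase with $m=m'$, in which the Grössencharacter $\xi_m\bar\xi'_m$ collapses to finite order and the isobaric-sum/Proposition \ref{siegelzeroprop} trick of Step 3 no longer applies. Here twist-inequivalence of $E$ and $E'$ must be carefully leveraged to rule out the trivial character, and Siegel-zero elimination for a finite-order Hecke character of $K$ requires the more delicate Hoffstein-type machinery rather than the direct Proposition \ref{prop41} contradiction available when $K\neq K'$.
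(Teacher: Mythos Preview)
Your baseline argument via Theorem~\ref{cor42}(2) together with the conductor estimate, followed by removal of the potential real zero via Proposition~\ref{siegelzeroprop}, is exactly the paper's proof; the paper states just those two steps in two sentences.

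You are in fact more careful than the paper in one respect: the paper invokes Proposition~\ref{siegelzeroprop} without verifying its hypothesis $\pi\otimes\chi\neq\pi$, which fails when $K=K'$ (since then $\pi_{f'}\otimes\chi_K=\pi_{f'}$). Your factorization $L(s,\pi_f\times\pi_{f'})=L(s,\xi_m\xi'_{m'})\,L(s,\xi_m\bar\xi'_{m'})$ is the natural remedy, and whenever a factor has positive infinity type its automorphic induction is a holomorphic cusp form on $\GL_2$, so \cite{hoffstein1995siegel} eliminates any Siegel zero for it.

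The remaining soft spot is the $K=K'$, $m=m'$ subcase. There the finite-order character $\eta=\xi_m\bar\xi'_m$ induces a cuspidal $\GL_2$ representation only when $\eta\neq\eta^c$; if $\eta=\eta^c$ then $L(s,\eta)$ factors as a product of two Dirichlet $L$-functions, and no Hoffstein--Ramakrishnan style result excludes Siegel zeros for those. For $m=1$ one checks that $\eta=\eta^c$ forces $\xi/\xi'$ to be Galois-invariant, hence $\xi=\xi'\cdot(\psi\circ\mathrm{N})$ for a Dirichlet character $\psi$, making $E$ and $E'$ twist-equivalent---a contradiction. For $m>1$, however, $\eta=\eta^c$ only says $(\xi/\xi')^m$ is Galois-invariant, which does not immediately yield the same contradiction; your appeal to twist-inequivalence there is not yet a complete argument, and the vague ``Hoffstein-type machinery for finite-order Hecke characters'' does not exist in the form you need.
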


\begin{proof}
    Note that as a special case of the work due to Bushnell and Henniart \cite{bushnell1997upper}, we know that $q(\pi_f \times \pi_{f'})$ divides $q(\pi_{f})^2q(\pi_{f'})^2$ (where $q(\cdot)$ denotes the conductor), and hence $C(\pi_{f} \times \pi_{f'}) \ll \mathfrak{q}^4$. Now, since $\pi_f, \pi_{f'}$ are both self-dual, Lemma \ref{CMCMsiegelzero} follows from Theorem \ref{cor42} with the exception of a possible Siegel zero. We may remove this possibility using Proposition \ref{siegelzeroprop}.
\end{proof}

\section{Prime Number Theorems}\label{sPNT}

Throughout this section, fix the notation from Sections \ref{sintro} and \ref{sinfo}. 

\subsection{Representations of \texorpdfstring{$\GL_m(\mathbb{A})$}{GLm(A)} twisted by Dirichlet characters} 

\begin{prop}\label{PNTnonCMAP}

Let $\pi\in\mathfrak{F}_m$ be self-dual, and let $\chi$ be a primitive Dirichlet character. Let $\beta_1$ denote the possible Siegel zero from Lemma \ref{zerofree}. For each prime $p$, let $a_\pi(p)$ be the coefficient of $p^{-s}$ in the Dirichlet series expansion of $L(s, \pi )$. Suppose that $\pi$ satisfies the generalized Ramanujan conjecture at all primes $p\nmid C(\pi)$ and all its Langlands parameters satisfy either $\mu_\pi(j)=0$ or $\textnormal{Re}(\mu_\pi(j))\ge \frac{1}{2}$ for each $j$. Then there exists an absolute constant $\Cl[abcon]{43} > 2$ such that if $2\le (C(\pi)C(\chi))^m\le x^{1/\Cr{43}}$ and $\Cr{43}mx^{-\frac{1}{\Cr{43}m}}<\frac{1}{4}$, we have \begin{align*}
    &\Bigg| \bigg( \sum_{\substack{p\le x\\ p\nmid q_\pi q_\chi}}a_{\pi}(p)\chi(p)\log p \bigg) -r_{\pi \otimes \chi} x+\frac{x^{\beta_1}}{\beta_1} \Bigg| \\&\ll m^2x^{1-\frac{1}{\Cr{43}m}}+m^2x\Big(\exp\Big(-\frac{\Cr{41}\log x}{2m\log C(\pi)C(\chi)}\Big)+\exp\Big(-\frac{\sqrt{\Cr{41}\log x}}{2\sqrt{m}}\Big)\Big).
\end{align*}

where the $\frac{x^{\beta_1}}{\beta_1}$ term is omitted if $\beta_1$ does not exist.

\end{prop}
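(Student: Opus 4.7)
The plan is a standard effective prime number theorem argument of the Perron-plus-contour-shift type, with the specific error shape dictated by Lemma \ref{zerofree}. First I would pass from the weighted prime sum to the full von Mangoldt sum $\psi(x, \pi\otimes\chi) := \sum_{n \le x} \Lambda_{\pi\otimes\chi}(n)$. Since the Satake parameters of $\pi\otimes\chi$ at primes $p \nmid q_\pi q_\chi$ are $\alpha_{j,\pi}(p)\chi(p)$, one has $\Lambda_{\pi\otimes\chi}(p) = a_\pi(p)\chi(p)\log p$, and the generalized Ramanujan conjecture hypothesis bounds the prime-power contribution $\sum_{\ell \ge 2}\sum_{p^\ell \le x}\Lambda_{\pi\otimes\chi}(p^\ell)$ by $O(m x^{1/2}\log x)$, which is absorbed into the stated $m^2 x^{1 - 1/(\Cr{43}m)}$ error. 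The primes dividing $q_\pi q_\chi$ contribute at most $O(m\log(C(\pi)C(\chi)))$, which is likewise negligible.

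Next I would apply a truncated Perron formula at $c = 1 + 1/\log x$ with truncation height $T$:
\begin{equation*}
\psi(x, \pi\otimes\chi) = \frac{1}{2\pi i}\int_{c-iT}^{c+iT}\Bigl(-\tfrac{L'}{L}(s, \pi\otimes\chi)\Bigr)\frac{x^s}{s}\,ds + O\Bigl(\frac{mx\log^2 x}{T}\Bigr),
\end{equation*}
and shift the contour to the line $\re(s) = 1 - \delta$ with $\delta = \Cr{41}/\bigl(m\log(C(\pi)C(\chi)(T+3))\bigr)$, which lies inside the zero-free region of Lemma \ref{zerofree}. The only singularities swept out are the possible pole at $s=1$ (residue $r_{\pi\otimes\chi}\,x$) and the possible Siegel zero $\beta_1$ (residue $-x^{\beta_1}/\beta_1$), giving the two stated main terms.

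For the integrals on the three remaining sides, I would use the standard bound $\bigl|\tfrac{L'}{L}(s,\pi\otimes\chi)\bigr| \ll m\log\bigl(C(\pi)C(\chi)(|\im s|+3)\bigr)$ inside the zero-free region, obtained via Hadamard factorization, the functional equation, and the explicit formula; the hypothesis $\mu_\pi(j)=0$ or $\re(\mu_\pi(j))\ge 1/2$ keeps the gamma-factor poles away from the contour so this bound is uniform. This gives contour-piece contributions $\ll m^2 x^{1-\delta}(\log x)\log(C(\pi)C(\chi)T)$ from the vertical side and $\ll m^2 x (\log x)/T$ from the horizontals.

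Finally I would balance $T$ in two regimes, corresponding to the two exponential terms in the conclusion. Setting $T = x^{1/(\Cr{43}m)}$ (legal by the hypothesis $(C(\pi)C(\chi))^m \le x^{1/\Cr{43}}$) produces the $m^2 x^{1-1/(\Cr{43}m)}$ and $\exp\bigl(-\Cr{41}\log x/(2m\log C(\pi)C(\chi))\bigr)$ terms, while choosing $T$ so that $\delta \log x \asymp \sqrt{\Cr{41}\log x / m}$ --- i.e.\ taking $T$ so that $\log(C(\pi)C(\chi)T) \asymp \sqrt{m\log x / \Cr{41}}$ --- yields the $\exp\bigl(-\sqrt{\Cr{41}\log x}/(2\sqrt{m})\bigr)$ term, capturing the regime in which $\log C(\pi)C(\chi)$ is small relative to $\sqrt{\log x / m}$. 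Adding the bounds from the two choices of $T$ and absorbing the Perron and prime-power errors into the main error produces the claimed inequality. The main technical obstacle is keeping the $m$-dependence explicit throughout: $m$ enters in the dimension of $\pi\otimes\chi$, in the width of the zero-free region, and in the Hadamard bound on $L'/L$, so one must be careful that the final error truly scales as stated rather than picking up extra factors of $m$.
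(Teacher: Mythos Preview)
Your proposal is correct and is essentially the same approach as the paper's. The paper does not give a self-contained argument here at all: it simply says the proof is identical to that of \cite[Proposition 5.1]{thorner2021effective}, replacing the zero-free region there by Lemma \ref{zerofree}, and noting that $a_{\pi\otimes\chi}(p)=a_\pi(p)\chi(p)$ for $p\nmid q_\pi q_\chi$. What you have written is exactly an unpacking of that cited argument---Perron truncation, contour shift into the zero-free region, Hadamard-type bound on $L'/L$, and optimization of $T$---so there is no methodological difference to discuss. One small phrasing issue: you do not literally ``add the bounds from the two choices of $T$''; rather, the error for the optimal $T$ is bounded by the minimum over the two displayed regimes, which is in turn bounded by their sum. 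This is cosmetic and does not affect the validity of your sketch.
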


\begin{proof}

The proof is identical to the proof of \cite[Proposition 5.1]{thorner2021effective}, except for the zero-free region \cite[Corollary 4.2]{thorner2021effective} is replaced by Lemma \ref{zerofree}. Note that by definition, the coefficient $a_{\pi \otimes \chi}(p)$ of $p^{-s}$ in the Dirichlet series expansion of $L(s, \pi)$ is precisely $a_\pi(p)\chi(p)$ for $p \nmid q_{\pi}q_{\chi}$. \end{proof}

\begin{lem}\label{nonCMPNT} Let $E$ be a non-CM elliptic curve, and let $\chi$ be a Dirichlet character which induces a character of modulus $q$. Then, there exists a sufficiently small absolute constant $\Cl[abcon]{2aM}$ such that if $M=\frac{\Cr{2aM}\sqrt{\log x}}{\log(N_Eq\log x))}$, then for all $1 \leq m \leq M,$
    \begin{align*}
    \sum_{\substack{p\le x\\ p\nmid N_E}}\chi(p)U_m(\cos \theta_E(p)) \log p \ll m^2x \Big( x^{-\frac{1}{\Cr{43}m}}+\Big(\exp\Big(-\frac{\Cr{42}\log x}{2m^2\log(N_Eqm)}\Big)+\exp\Big(-\frac{\Cr{42}\sqrt{\log x}}{2\sqrt{m}} \Big)\Big) \Big).
\end{align*}
\end{lem}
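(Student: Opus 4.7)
\medskip
\noindent\textbf{Proof proposal for Lemma \ref{nonCMPNT}.}

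The plan is to interpret the sum as a prime-counting sum for a Dirichlet-character twist of a symmetric power $L$-function and then invoke Proposition \ref{PNTnonCMAP}. Since $E$ is non-CM, Section \ref{ssympowerinfo} tells us that for each $m\ge 1$ the symmetric power $L$-function $L(s,\Sym^m E)=L(s,\Sym^m \pi_f)$ arises from a self-dual cuspidal representation $\pi=\Sym^m \pi_f\in \mathfrak{F}_{m+1}$, and that for each prime $p\nmid N_E$ one has the identity $a_\pi(p)=U_m(\cos\theta_E(p))$. Therefore
\[
\sum_{\substack{p\le x\\ p\nmid N_E}}\chi(p)\,U_m(\cos\theta_E(p))\log p
\;=\;\sum_{\substack{p\le x\\ p\nmid q_\pi q_\chi}} a_\pi(p)\chi(p)\log p
\;+\; O\!\bigl(\log(N_E q)\log x\bigr),
\]
the error absorbing the $O(\log(N_Eq))$ primes where the two summation conditions disagree.

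Next I would check the hypotheses of Proposition \ref{PNTnonCMAP} for $\pi=\Sym^m\pi_f$. Self-duality holds since $L(s,\Sym^m E)$ is self-dual; the generalized Ramanujan conjecture at unramified primes and the required condition on the Langlands parameters $\mu_\pi(j)$ follow from the explicit Satake and gamma-factor descriptions of $\Sym^m \pi_f$ recorded in Section \ref{ssympowerinfo} (together with the bound (\ref{GRC})). For $m\ge 1$ the representation $\pi\otimes\chi$ has dimension $m+1\ge 2$ and is therefore not the trivial representation of $\GL_1(\mathbb{A}_\mathbb{Q})$, so $r_{\pi\otimes\chi}=0$ and no residue term appears. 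Finally, Lemma \ref{apnozero} shows that the zero-free region for $L(s,\Sym^m\pi_f\otimes\chi)$ contains no Siegel exceptional zero, so the $x^{\beta_1}/\beta_1$ term in Proposition \ref{PNTnonCMAP} may be dropped, and one may use the sharper constant $\Cr{42}$ of Lemma \ref{apnozero} in place of $\Cr{41}$.

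Now I would check that the range of $m$ is compatible with the analytic-conductor hypothesis $(C(\pi)C(\chi))^m\le x^{1/\Cr{43}}$. The bound (\ref{conductorbound2}) gives $\log C(\Sym^m E)\ll m\log(N_E m)$, and obviously $\log C(\chi)\ll\log q$, so
\[
\log\bigl((C(\pi)C(\chi))^m\bigr)\ll m^2\log(N_E q m).
\]
Choosing $\Cr{2aM}$ sufficiently small in the definition $M=\Cr{2aM}\sqrt{\log x}/\log(N_E q\log x)$, the inequality $m\le M$ guarantees both $(C(\pi)C(\chi))^m\le x^{1/\Cr{43}}$ and $\Cr{43}mx^{-1/(\Cr{43}m)}<1/4$ (the latter is a mild condition for $m$ not too large). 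Then Proposition \ref{PNTnonCMAP} with $\Cr{41}$ replaced by $\Cr{42}$ yields
\[
\sum_{\substack{p\le x\\ p\nmid q_\pi q_\chi}} a_\pi(p)\chi(p)\log p
\ll m^2 x^{1-1/(\Cr{43}m)}+m^2 x\!\left(\exp\!\Bigl(-\tfrac{\Cr{42}\log x}{2m\log C(\pi)C(\chi)}\Bigr)+\exp\!\Bigl(-\tfrac{\sqrt{\Cr{42}\log x}}{2\sqrt{m}}\Bigr)\right),
\]
and substituting the estimate $\log C(\pi)C(\chi)\ll m\log(N_E q m)$ into the first exponential produces exactly the claimed bound (the negligible $O(\log(N_Eq)\log x)$ from the discrepancy between $p\nmid N_E$ and $p\nmid q_\pi q_\chi$ is dominated by the first main term).

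The only real obstacle is bookkeeping: one has to verify the technical hypotheses of Proposition \ref{PNTnonCMAP} in full, calibrate $\Cr{2aM}$ so that the required inequalities are satisfied uniformly in $1\le m\le M$, and track how the conductor bound $\log C(\Sym^m E\otimes\chi)\ll m\log(N_Eqm)$ converts the denominator $m\log C(\pi)C(\chi)$ of Proposition \ref{PNTnonCMAP} into the denominator $m^2\log(N_Eqm)$ in the statement of Lemma \ref{nonCMPNT}. All other ingredients — self-duality, Ramanujan bounds, absence of a Siegel zero, and vanishing of the residue — are direct consequences of results already recorded in Sections \ref{sinfo} and \ref{snonCMzerofree}.
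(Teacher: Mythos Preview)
Your proposal is correct and follows essentially the same route as the paper: verify that $\pi=\Sym^m\pi_f$ satisfies the hypotheses of Proposition \ref{PNTnonCMAP} (self-duality, GRC, Langlands-parameter condition, conductor bound (\ref{conductorbound2}), and absence of a Siegel zero via Lemma \ref{apnozero}), check that the choice of $M$ makes the conductor inequality hold for $1\le m\le M$, and then read off the bound with $\log C(\pi)C(\chi)\ll m\log(N_Eqm)$. Your treatment is in fact slightly more explicit than the paper's in handling the discrepancy between $p\nmid N_E$ and $p\nmid q_\pi q_\chi$ and in arguing that $r_{\pi\otimes\chi}=0$.
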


\begin{proof} Consider any integer $m \geq 1$. Let $f$ be the cusp form corresponding to $E$, and let $\pi_f \in \mathfrak{F}_2$ be the representation corresponding to $f$. We first establish the following properties about $L(s, \Sym^m \pi_f \otimes \chi)$.
\begin{enumerate}
    \item The conductor of $\text{Sym}^m \pi_f\otimes\chi$ satisfies $\log C(\text{Sym}^m \pi_f\otimes\chi)\ll m\log (N_Eqm)$.
    \item All of the Langlands parameters at infinity of $L(s, \text{Sym}^m \pi_f\otimes \chi)$ are nonnegative, and either integers or half integers.
    \item $L(s, \text{Sym}^m \pi_f\otimes \chi)$ is the $L$-function of a cuspidal automorphic representation in $\mathfrak{F}_{m+1}$.
    \item $L(s, \text{Sym}^m \pi_f\otimes \chi)$ is entire for $m\ge 1$.
    \item $L(s, \text{Sym}^m \pi_f\otimes \chi)$ has no zero in the region 
    $$
    \textnormal{Re}(s)\ge 1-\frac{\Cr{41}}{m^2\log(N_Eqm(3+|\textnormal{Im}(s)|))}.
    $$
    \item $L(s, \text{Sym}^m \pi_f\otimes \chi)$ satisfies the generalized Ramanujan conjecture (GRC). 
\end{enumerate}

(1) follows from (\ref{conductorbound}) and (\ref{conductorbound2}). (2), (3), and (4) all follow from the work due to Newton and Thorne in \citep{newton2021symmetric} and \citep{thorne2021symmetric}. (5) follows from Lemma \ref{zerofree} and Lemma \ref{apnozero}. (6) follows from the definition of the symmetric power $L$-function, given in Section \ref{ssympowerinfo}.

Now, given (1)-(6), we have that evidently there exists a sufficiently small absolute constant $\Cr{2aM}$ such that if $M=\frac{\Cr{2aM}\sqrt{\log x}}{\log(N_Eq\log x))}$, then for all $1 \leq m \leq M$, $\text{Sym}^m \pi_f\otimes \chi\in\mathfrak{F}_{m+1}$ satisfies the conditions in Proposition \ref{PNTnonCMAP} for all $x \geq 3$. Applying the proposition, we thus obtain
\begin{align*}
    \sum_{\substack{p\le x\\ p\nmid N_E}}\chi(p)U_m(\cos \theta_E(p))\log p \ll m^2x^{1-\frac{1}{\Cr{43}m}}+m^2x\Big(\exp\Big(-\frac{\Cr{42}\log x}{2m^2\log(N_Eqm)}\Big)+\exp\Big(-\frac{\Cr{42}\sqrt{\log x}}{2\sqrt{m}}\Big) \Big),
\end{align*} as desired.
\end{proof}

\subsection{CM Newforms twisted by Dirichlet characters}

Consider an elliptic curve $E$ which has complex multiplication over an imaginary quadratic field $K$. Let $m \geq 1$ be an integer, $\xi$ denote the primitive Gr\"{o}ssencharacter which satisfies $L(s, E) = L(s, \xi)$, $f_m$ denote the cusp form which induces $\xi_m$ (where $\xi_m$ is as defined in Section \ref{secinfo}), and $\pi_{f_m} \in \mathfrak{F}_2$ denote the representation corresponding to $f_m$.

\begin{lem}\label{coslogp}
Given any positive integer $m$, there exists an absolute constant $\Cl[abcon]{44} > 0$ such that for all $x \geq N_E$,
\begin{align*}
    \sum_{\substack{p \leq x \\ p \equiv a \bmod q \\ p \textup{ splits in }K}} \cos(m\theta_E(p)) \log p &\ll x\Big(\exp \Big (\frac{-\Cr{44}\log x}{\sqrt{\log x} + 3\log(N_E q m)} \Big ) \log (x N_E q m)^4 \Big ).
\end{align*}
\end{lem}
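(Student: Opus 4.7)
The plan is to expand the arithmetic progression condition via Dirichlet orthogonality, reinterpret the sum in terms of the cuspidal representation $\pi_{f_m}$ arising from automorphic induction of $\xi^m$, and apply a prime number theorem for $L(s, \pi_{f_m} \otimes \chi)$ using the zero-free region from Lemma~\ref{CMzerofree}. First I would write
\[
\sum_{\substack{p \leq x \\ p \equiv a \bmod q \\ p \text{ splits in }K}} \cos(m\theta_E(p)) \log p = \frac{1}{\varphi(q)}\sum_{\chi \bmod q}\overline{\chi(a)} \sum_{\substack{p \leq x \\ p \text{ splits in }K}} \chi(p) \cos(m\theta_E(p)) \log p,
\]
and then identify the inner sum with a standard PNT-type quantity. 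At a split prime $p = \mathfrak{p}\bar{\mathfrak{p}}$ the unitary Satake parameters of $\pi_{f_m}$ are $e^{\pm im\theta_E(p)}$, while at inert primes the two normalized parameters are $\pm\sqrt{\xi_m(\mathfrak{p})/p^m}$ and sum to zero, so $a_{\pi_{f_m}}(p) = 2\cos(m\theta_E(p))$ at split primes and vanishes at inert primes. Hence
\[
\sum_{\substack{p \leq x \\ p \text{ splits}}} \chi(p)\cos(m\theta_E(p))\log p = \tfrac{1}{2} \sum_{p \leq x} \chi(p) a_{\pi_{f_m}}(p) \log p + O(\log(N_E q)\log x),
\]
where the error absorbs the finitely many ramified primes.

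To bound the right-hand side, I would apply a truncated Perron formula to $-L'/L(s, \pi_{f_m} \otimes \chi)$ with truncation height $T$, then shift the contour leftward to $\re(s) = 1 - c/\log(N_E q m T)$ using Lemma~\ref{CMzerofree}. The residue sum over zeros $\rho = \beta + i\gamma$ with $|\gamma| \leq T$ is organized dyadically: the count $N(T) \ll T\log(N_E q m T)$ combined with $x^\beta \leq x\exp(-c\log x/\log(N_Eqm(3+|\gamma|)))$ gives a total residue contribution of $\ll x\exp(-c'\log x/\log(N_Eqm T)) \log^3(N_Eqm T)$, while the horizontal and leftmost contours contribute $O((x/T)\log^2(xN_Eqm))$. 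Choosing $\log T \asymp \sqrt{\log x}$ to balance the two terms yields
\[
\sum_{p \leq x}\chi(p)a_{\pi_{f_m}}(p) \log p \ll x \exp\Big(\frac{-c''\log x}{\sqrt{\log x} + 3\log(N_E q m)}\Big)\log^4(xN_E q m),
\]
with prime-power discrepancies from $\psi$ vs.\ $\theta$ absorbed into $O(\sqrt{x}\log^2 x)$. Summing over $\chi \bmod q$ with $|\overline{\chi(a)}| \leq 1$ and dividing by $\varphi(q)$ preserves the per-character bound.

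The main obstacle will be executing the contour shift with explicit control of the logarithmic factors near the critical line, since the Littlewood-quality zero-free region is relatively narrow and the conductor of $\pi_{f_m} \otimes \chi$ depends on all three of $N_E$, $q$, and $m$; extracting the precise power $\log^4(xN_Eqm)$ requires careful accounting of the zero-density estimate and of $L'/L$ bounds on the shifted contour. A secondary subtlety is that non-primitive $\chi \bmod q$ must be traded for their primitive inducers to invoke Lemma~\ref{CMzerofree}; the finitely many extra local Euler factors at $p \mid q$ contribute only lower-order terms and are absorbed into the final error.
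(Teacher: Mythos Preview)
Your proposal is correct and follows essentially the same strategy as the paper: orthogonality over characters mod $q$, identification of $\cos(m\theta_E(p))$ with $\tfrac{1}{2}a_{\pi_{f_m}}(p)$ at split primes (vanishing at inert), and a prime number theorem for $L(s,\pi_{f_m}\otimes\chi)$ using the zero-free region of Lemma~\ref{CMzerofree}. The only difference is that the paper invokes \cite[Theorem~5.13]{iwaniec2021analytic} directly for the PNT step rather than executing the Perron/contour-shift argument you sketch, so the obstacle you anticipate about extracting the precise $\log^4$ factor is absorbed into that black box; the paper also handles the primitive-versus-imprimitive $\chi$ issue exactly as you suggest, noting the discrepancy is negligible.
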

\begin{proof}
Consider a primitive Dirichlet character $\chi$ which induces a Dirichlet character mod $q$. We first establish a prime number theorem for $L(s, f_m \otimes \chi)$. Let $\Lambda(n)$ denote the Von Mangoldt function. Note that by \cite[Section 12.3]{iwaniec1997topics}, we have that if we write the logarithmic derivative of $L(s, f_m \otimes \chi)$ as
\begin{align*}
    -\frac{L'(s, f_m\otimes\chi)}{L(s, f_m\otimes\chi)} &= \sum_n a_{m, \chi}(n) \Lambda(n)n^{-s},
\end{align*}
then
\begin{align*}
        a_{m, \chi}(p) &= \begin{cases} 2\chi(p)\cos(m\theta_E(p)) & p \text{ splits in }K \\ 0 & \text{otherwise} \end{cases}
\end{align*}
and $|a_{m, \chi}(p^j)|\le 2$ for all prime powers $p^j, j \geq 2$. Note that when $\chi$ is trivial, we will write $a_{m,\chi} = a_m$. Now, this evidently implies that $\sum_{n \leq x} |a_{m, \chi}(n)\Lambda(n)|^2 \ll x\log^2(x)$. Hence, given the above and Lemma \ref{CMzerofree}, we may apply \cite[Theorem 5.13]{iwaniec2021analytic} and obtain that there exists an absolute constant $\Cr{44} > 0$ for which
\begin{align}\label{CMbound}
    \sum_{n \leq x} a_{m, \chi}(n) \Lambda(n) &\ll x \exp \Big ( \frac{-\Cr{44} \log x}{\sqrt{\log x} + 3\log(N_Eqm)} \Big )\log(xN_Eqm)^4.
\end{align} 

Now, note that
\begin{align*}
    \sum_{\substack{p \leq x \\ p \equiv a \bmod q \\ p \text{ splits in }K}} \cos(m\theta_E(p)) \log p - \frac{1}{2} \sum_{\substack{n \leq x \\ n \equiv a(q)}} a_m(n)\Lambda(n) &=- \frac{1}{2}\sum_{\substack{p \leq x \\ r \geq 2 \\ p^r \equiv a(q)}} a_m(p^r)\log p - \frac{1}{2} \sum_{\substack{p \leq x \\ p \equiv a \bmod q \\ p | N_E}} a_m(p) \log p \\
    &\ll \sqrt{N_E}\log N_E + \sqrt{x}\log x.
\end{align*}
Here we used the fact that by (4), $a_m(p)=2\cos(m\theta_E(p))$ for all $p\nmid N_E$ and $|a_m(p)|\le 2\sqrt{p}$ for all $p$. Hence we have that
\begin{align*}
     \sum_{\substack{p \leq x \\ p \equiv a \bmod q \\ p \text{ splits in }K}} \cos(m\theta_E(p)) \log p  &\ll  \sqrt{N_E}\log N_E+ \sqrt{x} \log x + \sum_{\substack{n \leq x \\ n \equiv a(q)}} a_m(n)\Lambda(n) \\
    &\ll  \sqrt{N_E}\log N_E+ \sqrt{x} \log x + \frac{1}{\varphi(q)} \sum_{\chi(q)} \overline{\chi(a)} \sum_{n \leq x} a_{m, \chi}(n) \Lambda(n)  \\
    &\ll x\exp \Big ( \frac{-\Cr{44}\log x}{\sqrt{\log x} + 3\log(N_Eqm)} \Big )\log (x N_E q m)^4.
\end{align*}
when $x\ge N_E$, as desired. Note that in the summation $\chi$ ranges over all the primitive characters which induce the characters $\bmod \thinspace q$, while to get the last inequality we replace these characters with all the (possibly imprimitive) characters $\bmod \thinspace q$. This generates a negligible error which is absorbed into other error terms.
\end{proof}
For the rest of the paper, the symbol $\chi (q)$ in the subscript for a summation means that the sum ranges through all Dirichlet characters $\chi$ with modulus $q$. Again, we will often replace these characters by the primitive characters that induce them; in every case the error will be negligble.

For the proof of Theorem \ref{master} (2.b), it will also be necessary to derive an estimate for the number of primes in an arithmetic progression which split or remain inert in $K$. Throughout the following proof, let $D_K$ be the discriminant of $K$.

\begin{lem}\label{aqsplit}
There exists absolute constants $\Cl[abcon]{451}, \Cl[abcon]{452}$ such that
\begin{align*}
    (1) \thickspace \thickspace \thickspace &\#\{p \leq x: p \equiv a \bmod q, p \textup{ splits in }K\} \\&= \frac{\pi(x)}{2\varphi(q)} \left(1+\chi_K(a)\delta(K,q)\right) 
    \\& + O \Big( \pi(x) \Big(\frac{x^{-\Cr{451}/\sqrt{q|D_K|}}}{\varphi(q)} + \exp \Big ( \frac{-\Cr{452} \log x}{\sqrt{\log x} + 3 \log (q|D_K|)} \Big ) (\log (xq|D_K|))^4 \Big) \Big). \\
    (2) \thickspace \thickspace \thickspace &\#\{p \leq x: p \equiv a \bmod q, p \textup{ inert in }K\} \\&= \frac{\pi(x)}{2\varphi(q)} \left(1-\chi_K(a)\delta(K,q)\right) 
    \\& + O \Big( \pi(x) \Big(\frac{x^{-\Cr{451}/\sqrt{q|D_K|}}}{\varphi(q)} + \exp \Big ( \frac{-\Cr{452} \log x}{\sqrt{\log x} + 3 \log (q|D_K|)} \Big ) (\log (xq|D_K|))^4 \Big) \Big).
\end{align*}
\end{lem}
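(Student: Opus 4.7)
The plan is to combine Dirichlet character orthogonality modulo $q$ with the Kronecker identity $\mathbf{1}_{p\text{ splits in }K} = (1+\chi_K(p))/2$, valid for all $p\nmid D_K$, to rewrite the count as
\[
\#\{p\le x : p\equiv a\bmod q,\ p\text{ splits in }K\} = \frac{1}{2\varphi(q)}\sum_{\chi(q)}\overline{\chi(a)}\sum_{p\le x}\bigl(\chi(p) + (\chi\chi_K)(p)\bigr) + O(\log|D_K|),
\]
the error accounting for the finitely many ramified primes. Part (2) follows identically after substituting $(1-\chi_K(p))/2$, so it suffices to treat part (1).

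The first sum collapses by orthogonality to $\pi(x;q,a)/2$, producing the main term $\pi(x)/(2\varphi(q))$ via the standard effective PNT in arithmetic progressions; the possible Siegel-zero contribution from an exceptional real character of modulus dividing $q$ is controlled by Landau--Siegel, yielding an error absorbed into the stated $\pi(x)x^{-c/\sqrt{q|D_K|}}/\varphi(q)$ term. For the second sum I would analyze when $\chi\chi_K$ is principal: since $\chi_K$ is the real primitive character of conductor $|D_K|$, the product $\chi\chi_K$ (viewed mod $\mathrm{lcm}(q,|D_K|)$) is principal precisely when $\chi$ agrees with $\chi_K$ modulo $q$, which forces $|D_K|\mid q$, i.e.\ $\delta(K,q)=1$. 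In that case only, this distinguished character satisfies $\overline{\chi(a)}=\chi_K(a)$ and the inner sum equals $\pi(x)+O(\log|D_K|)$, contributing the second main term $\chi_K(a)\pi(x)/(2\varphi(q))$; assembling gives the factor $1+\chi_K(a)\delta(K,q)$.

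For all remaining $\chi$, the character $\chi\chi_K$ is non-principal with conductor dividing $q|D_K|$. Applying an effective PNT for Dirichlet $L$-functions to the primitive character inducing $\chi\chi_K$ (for instance Proposition \ref{PNTnonCMAP} with $\pi=\mathbbm{1}\in\mathfrak{F}_1$, or the formulation from \cite[Theorem 5.13]{iwaniec2021analytic} already used in the proof of Lemma \ref{coslogp}) bounds each sum by
\[
x\exp\Bigl(\tfrac{-c\log x}{\sqrt{\log x} + 3\log(q|D_K|)}\Bigr)\bigl(\log(xq|D_K|)\bigr)^4
\]
plus a Siegel-zero term $x^{\beta_1}/\beta_1$ from at most one exceptional real character. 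The Landau--Siegel bound $\beta_1 \le 1 - c/\sqrt{q|D_K|}$ converts the latter into $x\cdot x^{-c/\sqrt{q|D_K|}}$. Averaging over the $\varphi(q)$ characters (the $1/(2\varphi(q))$ prefactor exactly cancels the $\varphi(q)$ summands) and using partial summation to pass from $\sum_p(\,\cdot\,)\log p$ to a count of primes produces exactly the two error terms claimed.

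The main obstacle is the careful bookkeeping of Siegel zeros: among the $2\varphi(q)$ primitive characters arising as $\chi$ and $\chi\chi_K$ for $\chi$ running over characters mod $q$, at most one can be Landau--Siegel exceptional, and the bound must be applied with the full modulus $q|D_K|$ so that the resulting error scales like $x^{-c/\sqrt{q|D_K|}}$ rather than $x^{-c/\sqrt{q}}$ alone. A secondary delicate point is handling possible imprimitivity of $\chi\chi_K$ (it may factor through a proper divisor of $\mathrm{lcm}(q,|D_K|)$) and tracking the ramified primes dividing $D_K$; both introduce only negligible corrections which are absorbed into the stated error terms after shrinking the constants.
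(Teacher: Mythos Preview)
Your proposal is correct and follows essentially the same approach as the paper: both use the identity $\mathbf{1}_{p\text{ splits}}=(1+\chi_K(p))/2$, expand via orthogonality of Dirichlet characters mod $q$, split on whether $\chi\chi_K$ is principal (equivalently whether $\delta(K,q)=1$), and apply the effective PNT for Dirichlet $L$-functions from \cite[Theorem 5.13]{iwaniec2021analytic} together with Siegel-zero bookkeeping and partial summation. The only cosmetic difference is that the paper first isolates $\pi(x;q,a)$ and then handles the $\chi_K$-twisted sum separately, whereas you combine both sums under a single orthogonality expansion before splitting.
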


\begin{proof} 

The two estimates are proved analogously, so we only prove (1). First, note that \begin{align*}
    \sum_{\substack{p \leq x \\ p \equiv a \bmod q \\ p \text{ splits in }K}} 1 &= \sum_{\substack{p \leq x \\ p \equiv a \bmod q}} \frac{\chi_K(p) + 1}{2} \\
    &= \frac{1}{2}\pi(x; q, a) + \frac{1}{2} \sum_{\substack{p \leq x \\ p \equiv a \bmod q}} \chi_K(p).
\end{align*} Now, given the possibility of a Siegel zero, we have that by \cite[Section 11.3]{montgomery2007multiplicative} and \cite[Theorem 5.28]{iwaniec2021analytic}, there exists absolute constants $\Cr{451}$ and $\Cl[abcon]{453}$ such that $\pi(x; q, a)$ satisfies the following:
\begin{align}\label{arithprogestimate}
     \pi(x; q, a)  = \frac{\Li(x)}{\varphi(q)} + O\Big(  \frac{x}{\log x} \Big ( \frac{x^{-\Cr{451}/\sqrt{q}}}{\varphi(q)} + (\log x) e^{-\Cr{453} \sqrt{\log x}}\Big )\Big).
\end{align} 
where $\Li(x) := \int_2^x \frac{dt}{\ln t}$. Note that by \cite[Theorem 6.14]{montgomery2007multiplicative}, we may replace the $\Li(x)$ in \eqref{arithprogestimate} with $\pi(x)$ and generate an error that is absorbed into the other error terms. Now, consider any primitive character $\chi$ that induces $\chi'\chi_K$ for some $\chi'$ mod $q$. Define $\delta_\chi$ as $1$ if $\chi$ is trivial and $0$ otherwise. Then, by \cite[Theorem 5.13, Theorem 5.28]{iwaniec2021analytic}, there exists an absolute constant $\Cl[abcon]{454}$ such that \begin{align*}
    \sum_{n \leq x} \chi(n) \Lambda(n) &= \delta_\chi x + O \Big ( x^{1 - \Cr{451}/\sqrt{q}} + x\exp \Big ( \frac{-\Cr{454} \log x}{\sqrt{\log x} + 3 \log q} \Big )(\log xq)^4 \Big ).
\end{align*} 
Here we adjust $\Cr{451}$ if necessary to absorb the contribution from $\sqrt{D_K}$, which can only take finitely many values. Also note that by \cite[Theorem 5.28]{iwaniec2021analytic}, the Siegel zero term $x^{1 - \Cr{451}/\sqrt{q}}$ can only exist for at most one of the primitive $\chi$ we consider. Now, note that
\begin{align*}
    \sum_{p \leq x} \chi(p) \log p = \sum_{n \leq x} \chi(n) \Lambda(n) + O(\sqrt{x}\log x).
\end{align*} 
We thus obtain \begin{align}\label{chilogp}
    \sum_{p \leq x} \chi(p) \log p &= \delta_\chi x + O \Big ( x^{1 - \Cr{451}/\sqrt{q}} + x\exp \Big ( \frac{-\Cr{454} \log x}{\sqrt{\log x} + 3 \log q} \Big )(\log xq)^4 \Big ).
\end{align}

We now bound $\sum_{\substack{p \leq x \\ p \equiv a \bmod q}} \chi_K(p) \log p$. We split into two cases. Note that by \cite[Theorem 9.13]{montgomery2007multiplicative}, we have that $\chi_K$ is primitive with conductor $|D_K|$. Hence, when $\delta(K,q)=0$, we have that $\chi'\chi_K$ is nontrivial for all $\chi' \bmod q$ with conductor dividing $q|D_K|$, so thus by (\ref{chilogp}), 
\begin{align*}
    \sum_{\substack{p \leq x \\ p \equiv a \bmod q}} \chi_K(p) \log p &= \frac{1}{\varphi(q)} \sum_{\chi(q)} \overline{\chi(a)} \sum_{p \leq x} \chi(p)\chi_K(p) \log p \\
    &\ll \frac{x^{1 - \Cr{451}/\sqrt{q|D_K|}}}{\varphi(q)} + x \exp \Big ( \frac{-\Cr{454} \log x}{\sqrt{\log x} + 3 \log (q|D_K|)} \Big ) (\log (xq|D_K|))^4.
\end{align*}
When $\delta(K,q)=1$, $\chi'\chi_K$ is trivial for exactly one character $\chi' \bmod q$. For this $\chi'$, we have  $\overline{\chi'(a)} = 1/\overline{\chi_K(a)} = \chi_K(a)$. By (\ref{chilogp}), we thus have
    \begin{align*}
        \sum_{\substack{p \leq x \\ p \equiv a \bmod q}} \chi_K(p) \log p &= \frac{1}{\varphi(q)} \sum_{\chi(q)} \overline{\chi(a)} \sum_{p \leq x} \chi(p)\chi_K(p) \log p \\
        &= \frac{\chi_K(a)}{\varphi(q)}x + O \Big ( \frac{x^{1 - \Cr{451}/\sqrt{q}}}{\varphi(q)} + x \exp \Big ( \frac{-\Cr{454} \log x}{\sqrt{\log x} + 3 \log q} \Big ) (\log xq)^4 \Big ).
    \end{align*}
    We now collate the bounds above. By partial summation, we have
\begin{align*}
    \sum_{\substack{p \leq x \\ p \equiv a \bmod q}} \chi_K(p) &= \frac{\pi(x)}{\varphi(q)}\chi_K(a)\delta(K,q) \\&+O\bigg(\frac{1}{\log x} \Big ( \frac{x^{1 - \Cr{451}/\sqrt{q|D_K|}}}{\varphi(q)} + x \exp \Big ( \frac{-\Cr{454} \log x}{\sqrt{\log x} + 3 \log (q|D_K|)} \Big ) (\log (xq|D_K|))^4 \Big )\bigg).
\end{align*} Hence, there exists an absolute constant $\Cr{452}$ such that
\begin{align*}
    \sum_{\substack{p \leq x \\ p \equiv a \bmod q \\ p \text{ splits in }K}} 1 &= \frac{1}{2} \pi(x; q, a) + \frac{1}{2} \sum_{\substack{p \leq x \\ p \equiv a \bmod q}} \chi_K(p) \\&= \frac{\pi(x)}{2\varphi(q)} \left(1+\chi_K(a)\delta(K,q)\right) \\&+ O \Big( \pi(x) \Big(\frac{x^{-\Cr{451}/\sqrt{q|D_K|}}}{\varphi(q)} + \exp \Big ( \frac{-\Cr{452} \log x}{\sqrt{\log x} + 3 \log (q|D_K|)} \Big ) (\log (xq|D_K|))^4 \Big) \Big).
\end{align*}
\end{proof}

\subsection{Rankin-Selberg convolutions of a CM and Non-CM newform}

Consider two elliptic curves $E, E'$, the former having complex multiplication over an imaginary quadratic field $K$, and the latter without complex multiplication over any imaginary quadratic field. Let $\xi$ denote the primitive Gr\"{o}ssencharacter which satisfies $L(s, E) = L(s, \xi)$, $g_m$ (instead of $f_m$) denote the cusp form which induces $\xi_m$, and $\pi_{g_m} \in \mathfrak{F}_2$ denote the representation corresponding to $g_m$. Also let $f'$ denote the cusp form which corresponds to $E'$, and let $\pi_{f'} \in \mathfrak{F}_2$ be the representation which corresponds to $f'$.

\begin{lem}\label{CMnonCMPNT}
There exists an absolute constant $\Cl[abcon]{161} > 0$ such that if $M=\frac{\Cr{161}\sqrt{\log x}}{\log(N_{E}N_{E'}\log x)}$, then for all $1 \leq m, m' \leq M$,
\begin{align*}
    &\sum_{\substack{p\le x\\ p\nmid N_{E}N_{E'}}}\cos(m\theta_{E}(p))U_m(\cos \theta_{E'}(p)) \log p \\&\ll m'^2x\Big(x^{- \frac{1}{32\Cr{43}M^2}} + \exp\Big(-\frac{\Cr{461}\log x}{4M^2\log(N_{E}{N_{E'}}M)}\Big)+\exp\Big(-\frac{\sqrt{\Cr{461}\log x}}{2M}\Big) \Big).
\end{align*}
\end{lem}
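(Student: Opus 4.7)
The plan is to reduce the sum to three standard inputs that have already been built in the preceding sections: a Rankin--Selberg prime number theorem for $L(s,\pi_{g_m}\times \Sym^{m'}\pi_{f'})$, together with two applications of Lemma~\ref{nonCMPNT}. The key observation is the pointwise identity, valid at every $p\nmid N_E$,
\begin{equation*}
2\cos(m\theta_E(p)) \;=\; a_{g_m}(p) \;+\; \cos(m\pi/2)\,(1-\chi_K(p)),
\end{equation*}
which follows from the two possibilities for the Frobenius at a CM curve: if $p$ splits in $K$, then $\chi_K(p)=1$ and $a_{g_m}(p)=2\cos(m\theta_E(p))$; if $p$ is inert, then $\chi_K(p)=-1$, $a_{g_m}(p)=0$, and $\theta_E(p)=\pi/2$. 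Multiplying by $U_{m'}(\cos\theta_{E'}(p))\log p$ and summing splits the target sum into (a) $\tfrac12\sum a_{g_m}(p)U_{m'}(\cos\theta_{E'}(p))\log p$, (b) $\tfrac{\cos(m\pi/2)}{2}\sum U_{m'}(\cos\theta_{E'}(p))\log p$, and (c) $-\tfrac{\cos(m\pi/2)}{2}\sum \chi_K(p) U_{m'}(\cos\theta_{E'}(p))\log p$.

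Sums (b) and (c) are handled directly by Lemma~\ref{nonCMPNT} applied to $E'$ with, respectively, the trivial character and $\chi_K$. Since $E$ has CM by $K$, the discriminant $|D_K|$ (and hence the conductor of $\chi_K$) takes only finitely many values, so it can be absorbed into absolute constants, and the resulting bounds fit into the shape of the claimed right-hand side with $m$ replaced by $m'$ and $q=O(1)$.

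Sum (a) is the main term. Set $\Pi:=\pi_{g_m}\times \Sym^{m'}\pi_{f'}$, an isobaric automorphic representation of $\GL_{2(m'+1)}(\mathbb{A})$; its Dirichlet coefficient at every good prime $p$ equals $a_{g_m}(p)U_{m'}(\cos\theta_{E'}(p))$, and its Satake/Langlands parameters satisfy GRC by Section~\ref{sinfo}. Since $\pi_{g_m}$ is CM while $\Sym^{m'}\pi_{f'}$ is not, $\pi_{g_m}\not\cong\widetilde{\Sym^{m'}\pi_{f'}}$, so $L(s,\Pi)$ is entire. The conductor bounds \eqref{conductorbound} and \eqref{conductorbound2} give $\log C(\Pi)\ll \log(N_EN_{E'}mm')$, and Lemma~\ref{zerofreegrossen} gives the zero-free region of strength $1-c/(M^2\log(N_EN_{E'}(3+|\!\Im s|)))$ with no exceptional zero. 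I would now run the contour argument of \cite[Prop.~5.1]{thorner2021effective} essentially verbatim: apply a truncated Perron formula, shift the line of integration to just inside the zero-free region, and pick up no residues because $\Pi$ is not self-dual. The $m'^2$ pre-factor is forced by the $\GL_{m'+1}$-side of the Rankin--Selberg $L$-function (degree $2(m'+1)$ and the corresponding factors from its functional equation), while the $M^2$ appearing in the exponents reflects the zero-free region.

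The main obstacle is purely bookkeeping: choosing the truncation height $T$ and the horizontal shift $\sigma$ so that the three error terms -- the Perron truncation (giving $x^{-1/(32\Cr{43}M^2)}$), the contribution from the vertical contour inside the zero-free region (giving $\exp(-\Cr{461}\log x/(4M^2\log(N_EN_{E'}M)))$), and the high-frequency tail (giving $\exp(-\sqrt{\Cr{461}\log x}/(2M))$) -- balance as in Proposition~\ref{PNTnonCMAP}. Because everything enters through $M\ge \max(m,m')$ in the zero-free region, the exponents contain $M$ rather than $m$ or $m'$ separately, which is the only real departure from the non-CM template.
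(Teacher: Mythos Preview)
Your approach is correct and, for the main Rankin--Selberg term (a), coincides with the paper's proof: the paper simply verifies that $\pi_{g_m}\times\Sym^{m'}\pi_{f'}$ satisfies the hypotheses of \cite[Proposition~5.1]{thorner2021effective} (GRC, Langlands parameters with real part $0$ or $\ge\tfrac12$, and the zero-free region of Lemma~\ref{zerofreegrossen}) and invokes that proposition directly. Where you differ is the preliminary decomposition via $2\cos(m\theta_E(p))=a_{g_m}(p)+\cos(m\pi/2)(1-\chi_K(p))$, which the paper omits entirely. Strictly speaking your decomposition is needed for the lemma \emph{as stated}: the Dirichlet coefficient of $L(s,\pi_{g_m}\times\Sym^{m'}\pi_{f'})$ at a good prime $p$ is $a_{g_m}(p)U_{m'}(\cos\theta_{E'}(p))$, which vanishes at inert primes, so \cite[Proposition~5.1]{thorner2021effective} only bounds the split-prime sum. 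For even $m$ the inert contribution $\pm\sum_{p\text{ inert}}U_{m'}(\cos\theta_{E'}(p))\log p$ is nonzero and genuinely requires your terms (b) and (c), handled via Lemma~\ref{nonCMPNT}. So your proof is a bit more careful than the paper's; the paper appears to have either intended the sum over split primes (which is all that is actually used in Section~\ref{1b}) or silently absorbed the inert correction.
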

\begin{proof}
    By the information in Sections \ref{secinfo} and \ref{ssympowerinfo}, along with \cite[Equation 5.86]{iwaniec2021analytic} and \cite[Equation 3.3]{thorner2021effective}, we have that the Langlands parameters $\mu_{\pi_{g_m} \times \Sym^{m'} \pi_{f'}}$ of $L(s, \pi_{g_m} \times \Sym^{m'} \pi_{f'})$ satisfy that $\mu_{\pi_{g_m} \times \Sym^{m'} \pi_{f'}} = 0$ or $\textup{Re}(\mu_{\pi_{g_m} \times \Sym^{m'} \pi_{f'}}) \geq \frac{1}{2}$. Combining this with (\ref{GRC}), we find that there exists an absolute constant $\Cr{161} > 0$ such that if $M$ is defined as above, then $\pi_{g_m}\times \text{Sym}^{m'}\pi_{f'}$ satisfies the conditions of \cite[Proposition 5.1]{thorner2021effective} for all $1\le m,m'\le M$. Applying the Proposition, we find that
\begin{align*}
    &\sum_{\substack{p\le x\\ p\nmid N_{E}N_{E'}}}\cos(m\theta_{E}(p))U_m(\cos \theta_{E'}(p)) \log p\\&\ll m'^2x^{1 - \frac{1}{32\Cr{43}M^2}} + m'^2x\Big(\exp\Big(-\frac{\Cr{461}\log x}{4M^2\log(N_{E}{N_{E'}}M)}\Big)+\exp\Big(-\frac{\sqrt{\Cr{461}\log x}}{2M}\Big) \Big),
\end{align*}
as desired.
\end{proof}

\subsection{Rankin-Selberg convolutions of Two CM newforms}

Consider two twist-inequivalent elliptic curves $E, E'$ having complex mulitplication over two imaginary quadratic fields $K, K'$ (respectively). Let $m, m' \geq 1$ be integers. Consider the Gr\"{o}ssencharacters $\xi, \xi'$ corresponding to $E, E'$, let $f = f_m, f' = f'_{m'}$ be the cusp forms corresponding to $\xi_m, \xi'_{m'}$ (as defined in \ref{secinfo}), and let $\pi_f, \pi_{f'} \in \mathfrak{F}_2$ denote the representations corresponding to $f, f'$ respectively. Finally, let $\mathfrak{q} = N_EN_{E'}mm'$.

\begin{lem}\label{coscoslogp}

There exists an absolute constant $\Cl[abcon]{471} > 0$ such that 
$$
\sum_{\substack{p \leq x \\ p \textup{ splits in }K}}\cos(m\theta_{E}(p))\cos(m'\theta_{E'}(p)) \log p \ll x\exp\bigg(\frac{-\Cr{471}\log x}{\sqrt{\log x}+3\log \mathfrak{q}}\bigg)(\log x\mathfrak{q})^4.
$$

\end{lem}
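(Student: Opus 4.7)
The plan is to split the sum by the splitting behavior of $p$ in $K'$: reduce the contribution from primes splitting in both $K$ and $K'$ to a Rankin--Selberg prime number theorem for $L(s,\pi_f\times\pi_{f'})$, and reduce the contribution from primes splitting in $K$ but inert in $K'$ to Lemma \ref{coslogp}. Write $S = S_1 + S_2$ accordingly. In $S_2$, $p$ inert in $K'$ forces $a_{E'}(p)=0$ and $\theta_{E'}(p)=\pi/2$, so $\cos(m'\theta_{E'}(p)) = \cos(m'\pi/2)$.

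For $S_1$: at primes $p\nmid N_EN_{E'}$, the Hecke eigenvalues satisfy $a_{f_m}(p) = 2\cos(m\theta_E(p))\mathbf{1}_{p\,\text{splits in }K}$ and similarly for $a_{f'_{m'}}(p)$. Thus $\Lambda_{\pi_f\times\pi_{f'}}(p) = a_{f_m}(p)a_{f'_{m'}}(p)\log p$ equals $4\cos(m\theta_E(p))\cos(m'\theta_{E'}(p))\log p$ when $p$ splits in both and vanishes otherwise. Twist-inequivalence of $E$ and $E'$ forces $\pi_{f'}\not\cong\widetilde{\pi}_f$ (for $m\neq m'$ the weights differ, while for $m=m'$ and $K=K'$ any identification would force $\xi'/\xi$ to be a finite-order Hecke character and hence a Dirichlet twist, contradicting the hypothesis), so $L(s,\pi_f\times\pi_{f'})$ is entire. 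Combining Lemma \ref{CMCMsiegelzero} with the conductor bound $\log C(\pi_f\times\pi_{f'})\ll \log\mathfrak{q}$ noted there, the classical contour argument of \cite[Theorem 5.13]{iwaniec2021analytic} yields
\begin{align*}
\sum_{n\le x}\Lambda_{\pi_f\times\pi_{f'}}(n) \ll x\exp\!\left(\frac{-\Cr{471}\log x}{\sqrt{\log x}+3\log\mathfrak{q}}\right)(\log x\mathfrak{q})^4.
\end{align*}
Prime power and ramified prime contributions are $O(\sqrt{x}\log x)$ via the Ramanujan bound for CM forms, giving the claimed estimate for $4S_1$.

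For $S_2$: if $m'$ is odd then $\cos(m'\pi/2)=0$ and $S_2=0$. Otherwise use $\mathbf{1}_{p\,\text{inert in }K'} = (1-\chi_{K'}(p))/2$ (for $p\nmid D_{K'}$) to write $2S_2 = \cos(m'\pi/2)(A_1-A_2)$, where
\begin{align*}
A_1 = \sum_{\substack{p\le x\\ p\,\text{splits in }K}}\cos(m\theta_E(p))\log p,\qquad A_2 = \sum_{\substack{p\le x\\ p\,\text{splits in }K}}\chi_{K'}(p)\cos(m\theta_E(p))\log p.
\end{align*}
The sum $A_1$ is the $q=1$ case of Lemma \ref{coslogp}. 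For $A_2$, decompose $\chi_{K'}$ into its values on arithmetic progressions modulo $|D_{K'}|$ (which divides $N_{E'}$) and apply Lemma \ref{coslogp} with $q=|D_{K'}|$ on each residue class; since $|D_{K'}|\ll \mathfrak{q}$ the sum over residues is absorbed into the polylogarithmic factor, matching the target bound.

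The main obstacle is verifying that $L(s,\pi_f\times\pi_{f'})$ is entire so that the Rankin--Selberg prime number theorem applies; this reduces, as sketched above, to checking via the Gr\"ossencharacter description of $f_m$ and $f'_{m'}$ that twist-inequivalence at weight two propagates to non-contragredience of $\pi_f$ and $\pi_{f'}$ for all relevant $m,m'$.
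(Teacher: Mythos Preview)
Your approach is correct and essentially matches the paper's for the main term $S_1$: the paper likewise identifies $a_{\pi_f\times\pi_{f'}}(p)$ with (a constant multiple of) $\cos(m\theta_E(p))\cos(m'\theta_{E'}(p))$ at primes splitting in both $K$ and $K'$, observes that this Rankin--Selberg coefficient vanishes when $p$ is inert in either field, and then applies \cite[Theorem~5.13]{iwaniec2021analytic} together with the zero-free region of Lemma~\ref{CMCMsiegelzero}. The paper stops there and asserts the lemma, in effect treating the sum over primes splitting in $K$ as equal to the sum over primes splitting in both fields. Your $S_2$ term is precisely the discrepancy: when $m'$ is even and $p$ is inert in $K'$, the summand $\cos(m\theta_E(p))\cos(m'\pi/2)$ need not vanish. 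Your reduction of $S_2$ to Lemma~\ref{coslogp} via $\mathbf{1}_{p\text{ inert in }K'}=(1-\chi_{K'}(p))/2$ is correct, and since there are only finitely many possible $K'$ the sum over residues modulo $|D_{K'}|$ is harmlessly absorbed into the implied constant. So your argument actually closes a small gap in the paper's write-up (or, equivalently, proves the lemma exactly as stated rather than only the ``splits in both'' version the paper's proof literally establishes).

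On entireness of $L(s,\pi_f\times\pi_{f'})$: the paper's proof does not discuss this at all, so your flagging it is appropriate. Your weight argument settles $m\neq m'$, and for $m=m'$ with $K\neq K'$ non-isomorphism follows because a weight $\ge 2$ CM newform has a unique CM field. For $m=m'$ and $K=K'$ your sketch is not quite right: a finite-order Hecke character of $K$ need not factor through the norm, so $\xi_m\overline{\xi'_m}$ being finite-order does not by itself force a Dirichlet-twist relation between $E$ and $E'$. A cleaner way to handle this case is to factor $L(s,\pi_{f_m}\times\pi_{f'_m})$ as a product of Hecke $L$-functions over $K$ and check directly that neither factor is $\zeta_K$; the constraint that both $\xi$ and $\xi'$ arise from elliptic curves over $\mathbb{Q}$ (hence satisfy $\xi\circ c=\bar{\xi}$) is what makes this go through.
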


\begin{proof}

This is trivial for $x< \mathfrak{q}$; thus assume $x\ge \mathfrak{q}$. Let $\Lambda(n)$ denote the Von Mangoldt function, and let the Dirichlet series expansion of $L(s, \pi_f \times \pi_{f'})$ be $\sum_{n \geq 1} a_{\pi_f \times \pi_{f'}}(n)n^{-s}$. Define
\begin{align*}
    \psi(x, \pi_f \times \pi_{f'}) &= \sum_{n\le x}a_{\pi_f \times \pi_{f'}}(n) \Lambda(n).
\end{align*}
Note that by definition $a_{\pi_f \times \pi_{f'}}(p) = \cos(m\theta_{E}(p))\cos(m'\theta_{E'}(p))$ when $p \nmid N_{E}N_{E'}$, and moreover this quantity equals $0$ when $p$ is inert in either $K$ or $K'$. Hence
\begin{align*}
    \psi(x, \pi_f \times \pi_{f'}) &= \sum_{\substack{p \leq x \\ p \nmid N_{E}N_{E'} \\ p \text{ splits in }K\textup{~and~} K'}} \cos(m\theta_{E}(p))\cos(m\theta_{E'}(p))\log p + O\bigg(\sum_{p^j\le x, j\ge 2}\log p+\sum_{p|N_{E}N_{E'}}\sqrt{p}\log p \bigg)
    \\&= \sum_{\substack{p \leq x \\ p \nmid N_{E}N_{E'} \\ p \textup{ splits in }K \textup{~and~} K'}} \cos(m\theta_{E}(p))\cos(m\theta_{E'}(p))\log p+O\left(\sqrt{x}\log x+\sqrt{N_{E}N_{E'}}\log(N_{E}N_{E'})\right).
\end{align*}
Now, note that \cite[Equation 5.48]{iwaniec2021analytic} holds trivially for $L(s, \pi_f \times \pi_{f'})$. Hence, given the information above along with Lemma \ref{CMCMsiegelzero}, we may apply \cite[Theorem 5.13]{iwaniec2021analytic} (substituting \cite[Equation 5.39]{iwaniec2021analytic} with ($\ref{CMCMsiegelzero}$)) and get that there exists an absolute constant $\Cr{471} > 0$ such that
$$
\psi(x, \pi_f \times \pi_{f'}) \ll x\exp\bigg(-\frac{\Cr{471}\log x}{\sqrt{\log x}+3\log \mathfrak{q}}\bigg)(\log x\mathfrak{q})^4,
$$
Substituting in $\psi$, we now get our desired result.
\end{proof}

\section{Proof of Theorem \ref{master} (2.a)}\label{2a}

Fix the notation from Sections \ref{sintro}, \ref{sinfo}, and \ref{snonCMzerofree}. In particular, let $a, q$ be coprime positive integers, and consider an elliptic curve $E$ without complex multiplication over any imaginary quadratic field.

\begin{proof}[Proof of Theorem \ref{master} (2.a)]

We prove the Theorem for $x \geq N_E$; for $x < N_E$ the result is trivial. We start with some preliminary manipulations. Let $U_m$ denote the $m$th Chebyshev polynomial of the second kind. By Lemma \ref{bspoly1}, we have
\begin{align*}
    \pi_{I}(x; q,a) &= \sum_{\substack{p\le x\\ p\equiv a (q)}} \one_I(\theta_E(p))
    \\&\leq \sum_{\substack{p\le x\\ p\nmid N_E\\ p\equiv a (q)}} \Bigg( \mu_{\text{ST}}(I)+\Cr{bspoly0}\bigg(\frac{1}{M}+\sum_{m=1}^{M}\bigg(\frac{1}{m}U_m(\cos \theta_E(p))\bigg)\bigg) \Bigg)
    \\&= \mu_{\text{ST}}(I)\pi(x; q,a)+O\Bigg(\frac{\pi(x; q,a)}{M}+\sum_{\substack{p\le x\\ p\equiv a (q)}}\sum_{m=1}^{M}\bigg(\frac{1}{m}U_m(\cos \theta_E(p))\bigg)\Bigg),
\end{align*}
We may similarly bound $\pi_I(x;q, a)$ from below to obtain that
\begin{align}\label{eq41}
    \pi_I(x; q, a) &= \mu_{\text{ST}}(I)\pi(x; q,a)+O\Bigg(\frac{\pi(x; q,a)}{M}+\sum_{\substack{p\le x\\ p\equiv a (q)}}\sum_{m=1}^{M}\bigg(\frac{1}{m}U_m(\cos \theta_E(p))\bigg)\Bigg).
\end{align}
Moreover, note that \begin{align*}
    \sum_{\substack{p\le x\\ p\equiv a (q)}}\sum_{m=1}^{M}\frac{1}{m}U_m(\cos \theta_E(p)) &= \frac{1}{\varphi(q)}\sum_{\chi \bmod q}\overline{\chi(a)} \Bigg( \sum_{m=1}^{M}\frac{1}{m} \bigg( \sum_{p\le x}\chi(p)U_m(\cos \theta_E(p)) \bigg) \Bigg).
\end{align*} 

Now, by Lemma \ref{nonCMPNT}, we have that there exists a sufficiently small absolute constant $\Cr{2aM}$ such that if $M=\frac{\Cr{2aM}\sqrt{\log x}}{\log(N_Eq\log x))}$, then for all $1 \leq m \leq M,$
\begin{align*}
    \sum_{\substack{p\le x\\ p\nmid N_E}}\chi(p)U_m(\cos \theta_E(p)) \log p \ll m^2x \Big( x^{-\frac{1}{\Cr{43}m}}+\Big(\exp\Big(-\frac{\Cr{42}\log x}{2m^2\log(N_Eqm)}\Big)+\exp\Big(-\frac{\Cr{42}\sqrt{\log x}}{2\sqrt{m}} \Big)\Big) \Big).
\end{align*}
Applying partial summation and inserting the contributions from primes dividing $N_E$ (which is negligble for $x > N_E$), we obtain
\begin{align*}
    \sum_{p \leq x}\chi(p)U_m(\cos \theta_E(p)) \ll \frac{m^2x}{\log x} \Big( x^{-\frac{1}{\Cr{43}m}}+\Big(\exp\Big(-\frac{\Cr{42}\log x}{2m^2\log(N_Eqm)}\Big)+\exp\Big(-\frac{\Cr{42}\sqrt{\log x}}{2\sqrt{m}} \Big)\Big) \Big).
\end{align*}
 Substituting this into (\ref{eq41}), we obtain that \begin{align*}
    &\pi_{I}(x; q,a)-\mu_{\text{ST}}(I)\pi(x; q,a)-\frac{\pi(x)}{\varphi(q)M}\\
    &\ll \frac{1}{\varphi(q)}\sum_{\chi \bmod q}\overline{\chi(a)} \sum_{m=1}^M \bigg( \frac{mx}{\log x} \Big( x^{-\frac{1}{\Cr{43}m}}+\Big(\exp\Big(-\frac{\Cr{42}\log x}{2m^2\log(N_Eqm)}\Big)+\exp\Big(-\frac{\Cr{42}\sqrt{\log x}}{2\sqrt{m}} \Big)\Big) \Big) \bigg) \\
    &\ll \frac{M^2x}{\log x}\Big(x^{-\frac{1}{\Cr{43}M}}+\exp\Big(-\frac{\Cr{42}\log x}{2M^2\log(N_{E}qM)}\Big)+\exp\Big(-\frac{\Cr{42}\sqrt{\log x}}{2\sqrt{M}}\Big)\Big).
\end{align*}
Adjusting $\Cr{2aM}$ to be suitably small, we find that for all $x \geq 2$,
\begin{align*}
    &\pi(x)\bigg(\frac{1}{\varphi(q)M}+M^2\Big(x^{-\frac{1}{\Cr{43}M}}+\exp\Big(-\frac{\Cr{42}\log x}{2M^2\log(N_{E}qM)}\Big)+\exp\Big(-\frac{\Cr{42}\sqrt{\log x}}{2\sqrt{M}}\Big)\Big)\bigg) \\&\ll \pi(x; q,a)\frac{\log(N_{E}q\log x)}{\sqrt{\log x}},
\end{align*} Finally, recall that by (\ref{arithprogestimate}),
\begin{align*}
    \pi(x; q, a) = \frac{\pi(x)}{\varphi(q)}+O\bigg(\frac{x}{\log x} \Big ( \frac{x^{-\Cr{451}/\sqrt{q}}}{\varphi(q)} + (\log x) e^{-\Cr{453} \sqrt{\log x}}\Big )\bigg).
\end{align*}
Collating the above bounds, we get our desired estimate.
\end{proof}

\section{Proof of Theorem \ref{master} (2.b)}\label{2b}
Throughout this section, we will fix the notation from Sections \ref{sintro}, \ref{sinfo}, and \ref{sCMzerofree}. In particular, we will consider an elliptic curve $E$ which has complex multiplication over an imaginary quadratic field $K$, with its theta values $\theta_E(p)$ corresponding to its traces of Frobenius modulo $p$. 

\begin{proof}[Proof of Theorem \ref{master} (2.b)]
The bound is trivially true in the case $x < N_Eq$, so we only prove the case $x \geq N_Eq$. Note that
\begin{align}\label{cmap}
    \pi_{I}(x; q,a)&=\sum_{\substack{p \leq x \\ p \equiv a \bmod q \\ p \textup{ splits in }K}} \one_I(\theta_E(p)) + \one_{\pi/2 \in I} \sum_{\substack{p \leq x \\ p \equiv a \bmod q \\ p \textup{ inert}}} 1.
\end{align}
We first estimate the first term on the right side of (\ref{cmap}). By Lemma \ref{bspoly1}, we have that
\begin{align*}
    \sum_{\substack{p \leq x \\ p \equiv a \bmod q \\ p \textup{ splits in }K}} \one_I(\theta_E(p)) &\leq \sum_{\substack{p \leq x \\ p \equiv a \bmod q \\ p \textup{ splits in }K}} \bigg(\frac{|I|}{\pi} + \Cr{bspoly0}\bigg(\frac{1}{M}+\sum_{1 \leq m \leq M} \frac{\cos(m\theta_E(p))}{m} \bigg)\bigg) \nonumber \\
    &= \bigg(\sum_{\substack{p \leq x \\ p \equiv a \bmod q \\ p \text{ splits in }K}} \frac{|I|}{\pi} + O\Big(\frac{1}{M}\Big)\bigg) + O\bigg(\sum_{1 \leq m \leq M} \frac{1}{m} \sum_{\substack{p \leq x \\ p \equiv a \bmod q \\ p \text{ splits in }K}} \cos(m\theta_E(p))\bigg).
\end{align*}
We may similarly bound the left hand side from below, to obtain that
\begin{align}
    \sum_{\substack{p \leq x \\ p \equiv a \bmod q \\ p \textup{ splits in }K}} \one_I(\theta_E(p)) &= \bigg(\sum_{\substack{p \leq x \\ p \equiv a \bmod q \\ p \text{ splits in }K}} \frac{|I|}{\pi} + O\Big(\frac{1}{M}\Big)\bigg) + O\bigg(\sum_{1 \leq m \leq M} \frac{1}{m} \sum_{\substack{p \leq x \\ p \equiv a \bmod q \\ p \text{ splits in }K}} \cos(m\theta_E(p))\bigg).\label{cmap1}
\end{align}
Now, using Lemma \ref{coslogp} and partial summation, we find that \begin{align}
        \sum_{\substack{p \leq x \\ p \equiv a \bmod q \\ p \text{ splits in }K}} \cos(m\theta_E(p)) &\ll \frac{1}{\log x} \Big ( x\exp \Big ( \frac{-\Cr{44}\log x}{\sqrt{\log x} + 3\log(N_Eqm)} \Big )(\log (x N_E q m)^4 \Big ) \nonumber \\
        &+ \Big( \int_{1}^{x}(\log t)^2\exp \Big ( \frac{-\Cr{44}\log t}{\sqrt{\log t} + 3\log(N_Eqm)} \Big )\log (N_E q m)^4 dt \Big)   \nonumber     \\
        &\ll \frac{1}{\log x} \Big ( x\exp \Big ( \frac{-\Cr{44}\log x}{\sqrt{\log x} + 3\log(N_Eqm)} \Big )(\log (x N_E q m)^4 \Big ). \label{cmap2}
\end{align} 
Here we use that for $x\ge N_Eq$, \begin{align*}
    &\int_{1}^{x}(\log t)^2\exp \Big ( \frac{-\Cr{44}\log t}{\sqrt{\log t} + 3\log(N_Eqm)} \Big )\log (N_E q m)^4 dt 
    \\&= \int_{0}^{\log x}\exp\Big(t- \frac{-\Cr{44}t}{\sqrt{t} + 3\log(N_Eqm)} \Big )t^2\log (N_E q m)^4 dt \\&\ll x(\log x)^3\exp \Big ( \frac{-\Cr{44}\log x}{\sqrt{\log x} + 3\log(N_Eqm)} \Big )\log (N_E q m)^4.
\end{align*}
We now use (\ref{cmap2}) and Lemma \ref{aqsplit} to estimate the error in (\ref{cmap1}). We first investigate the contribution from the $p$ which split in $K$. Recall that
\begin{align*}
    \sum_{\substack{p \leq x \\ p \equiv a \bmod q \\ p \text{ splits in }K}} \one_I(\theta_E(p)) - \sum_{\substack{p \leq x \\ p \equiv a \bmod q \\ p \text{ splits in }K}} \frac{|I|}{\pi} &\ll \sum_{\substack{p \leq x \\ p \equiv a \bmod q \\ p \text{ splits in }K}} \frac{1}{M} + \sum_{1 \leq m \leq M} \sum_{\substack{p \leq x \\ p \equiv a \bmod q \\ p \text{ splits in }K}} \frac{1}{m} \cos(m\theta_E(p)).
\end{align*}
Hence by Lemma \ref{aqsplit} (1), we have
\begin{align*}
    &\sum_{\substack{p \leq x \\ p \equiv a \bmod q \\ p \text{ splits in }K}} \one_I(\theta_E(p)) - \frac{|I|}{\pi} \frac{\pi(x)}{2\varphi(q)} \Big ( 1 + \chi_K(a)\delta(K,q) \Big ) \\\nonumber&\ll \frac{x}{\log x} \Big( \frac{x^{-\Cr{451}/\sqrt{q}}}{\varphi(q)} + \exp \Big ( \frac{-\Cr{452} \log x}{\sqrt{\log x} + 3 \log q} \Big ) (\log xq)^4 \Big) + \frac{\pi(x)}{M\varphi(q)} \\\nonumber &+ \sum_{1 \leq m \leq M} \frac{x}{m\log x}  \Big ( \exp \Big ( \frac{-\Cr{44}\log x}{\sqrt{\log x} + 3\log(N_Eqm)} \Big )(\log (x N_E q m)^4 \Big ).
\end{align*}
Note that since there are finitely many possibilities for $K$, we may drop the $|D_K|$ contribution to the error term. Similarly, by Lemma \ref{aqsplit} (2) we may estimate the contribution from the $p$ which remain inert in $K$ as
\begin{align*}
    \one_{\pi/2 \in I}\sum_{\substack{p \leq x \\ p \equiv a \bmod q \\ p \text{ inert in }K}} 1
    &= \one_{\pi/2 \in I} \Bigg(\frac{\pi(x)}{2\varphi(q)} \left(1-\chi_K(a)\delta(K,q)\right) \\&+ O \Big( \pi(x) \Big(\frac{x^{-\Cr{451}/\sqrt{q}}}{\varphi(q)} + \exp \Big ( \frac{-\Cr{452} \log x}{\sqrt{\log x} + 3 \log q} \Big ) (\log xq)^4 \Big) \Big) \Bigg).
\end{align*}
Now, collating the bounds in the previous two equations gives
\begin{align*}
    & \pi_I(x;q,a) - \frac{|I|}{\pi} \frac{\pi(x)}{2\varphi(q)} \Big ( 1 + \chi_K(a)\delta(K,q) \Big ) - \one_{\pi/2 \in I} \Big( \frac{\pi(x)}{2\varphi(q)} \Big ( 1 - \chi_K(a)\delta(K,q) \Big ) \Big) \\
    &\ll\frac{x}{\log x} \Big( \frac{x^{-\Cr{451}/\sqrt{q}}}{\varphi(q)} + \exp \Big ( \frac{-\Cr{452} \log x}{\sqrt{\log x} + 3 \log q} \Big ) (\log xq)^4 \Big) + \frac{\pi(x)}{M\varphi(q)} \\ &+ \frac{x\log M}{\log x}  \Big ( \exp \Big ( \frac{-\Cr{44}\log x}{\sqrt{\log x} + 3\log(N_EqM)} \Big )(\log (x N_E q M)^4 \Big ).
\end{align*}
Setting $M = \exp(\sqrt{\log x})$ now gives us that there exists an absolute constant $\Cl[abcon]{457}$ such that
\begin{align*}
    &\bigg|\pi_I(x;q,a) - \frac{\pi(x)}{2\varphi(q)}\bigg(\frac{|I|}{\pi} + \one_{\pi/2 \in I} + \chi_K(a)\delta(K, q)\Big(\frac{|I|}{\pi} - \one_{\pi/2 \in I}\Big)\bigg)\bigg| \\
    &\ll \Big( \pi(x) \Big( \frac{x^{-\Cr{451}/\sqrt{q}}}{\varphi(q)} + (\log x)^{9/2} \exp \Big ( \frac{-\Cr{457}\log x}{\sqrt{\log x} + \log(N_Eq)} \Big ) \Big) \Big),
\end{align*}
as desired.
\end{proof}

\section{Proof of Theorem \ref{master} (1.b)}\label{1b}

Throughout this section, we will fix the notation from Sections \ref{sintro}, \ref{sinfo}, and \ref{sCMnonCMzerofree}. In particular, we will consider two elliptic curves $E, E'$, the former having complex multiplication over an imaginary quadratic field $K$, the latter without complex multiplication over any imaginary quadratic field. Moreover, let $\xi$ denote the primitive Gr\"{o}ssencharacter which satisfies $L(s, E) = L(s, \xi)$, and let $g_m$ denote the cusp form which induces $\xi_m$ (where $\xi_m$ is as defined in Section \ref{secinfo}).

\begin{proof}[Proof of Theorem \ref{master} (1.b)]
We have 
\begin{align}\label{section60}
    \pi_{I,I'}(x) = \sum_{\substack{p\le x\\ p \text{ splits in }K}} \one_{I}(\theta_{E}(p))\one_{I'}(\theta_{E'}(p))+\one_{\frac{\pi}{2}\in I}\sum_{\substack{p\le x\\ p \text{ inert in }K}}\one_{I'}(\theta_{E'}(p)).
\end{align}
We first estimate the contribution from the splitting primes to (\ref{section60}). Applying Lemma \ref{bspoly2}, we obtain \begin{align}  \label{cmnoncmerror}
    \bigg|\sum_{\substack{p\le x\\ p \text{ splits}}} \one_{I}(\theta_{E}(p))\one_{I'}(\theta_{E'}(p)) - \frac{|I|}{\pi}\mu_{\text{ST}}(I')\sum_{\substack{p \leq x \\ p \text{ splits}}}&1 \bigg| \\
    \ll \nonumber \frac{\pi(x)}{M}+\sum_{\substack{p\le x\\ p \text{ splits}}} \bigg(\sum_{m=1}^{M}\frac{\cos(m\theta_{E}(p))+U_m(\cos\theta_{E'}(p))}{m} &+ \sum_{1\le m, m'\le M}\frac{\cos(m\theta_{E}(p))U_{m'}(\cos\theta_{E'}(p))}{mm'} \bigg).
\end{align}

We first bound the third term on the right side of (\ref{cmnoncmerror}). By Lemma \ref{CMnonCMPNT}, there exists an absolute constant $\Cr{161} > 0$ such that if $M=\frac{\Cr{161}\sqrt{\log x}}{\log(N_{E}N_{E'}\log x)}$, then for all $1 \leq m, m' \leq M$,
\begin{align*}
    &\sum_{\substack{p\le x\\ p\nmid N_{E}N_{E'}}}\cos(m\theta_{E}(p))U_m(\cos \theta_{E'}(p)) \log p\\&\ll m'^2x\Big(x^{- \frac{1}{32\Cr{161}M^2}} + \exp\Big(-\frac{\Cr{461}\log x}{4M^2\log(N_{E}{N_{E'}}M)}\Big)+\exp\Big(-\frac{\sqrt{\Cr{461}\log x}}{2M}\Big) \Big).
\end{align*}
Now, by partial summation, we have \begin{align*}
    &\sum_{p \leq x}\sum_{1\le m, m'\le M}\frac{\cos(m\theta_{E}(p))U_{m'}(\cos\theta_{E'}(p))}{mm'} \\&\ll M^2\log M\Big(\frac{x}{\log x}\Big)\Big(x^{- \frac{1}{32\Cr{161}M^2}} + \exp\Big(-\frac{\Cr{461}\log x}{4M^2\log(N_{E}{N_{E'}}M)}\Big)+\exp\Big(-\frac{\sqrt{\Cr{461}\log x}}{2M}\Big) \Big).
\end{align*}
Hence, we may estimate the first and third terms on the right side of (\ref{cmnoncmerror}) as follows: \begin{align}\label{cmnoncmerror1}
    &\frac{\pi(x)}{M}+\sum_{\substack{p\le x\\ p \text{ splits}}}\sum_{1\le m, m'\le M}\frac{\cos(m\theta_{E}(p))U_{m'}(\cos\theta_{E'}(p))}{mm'}\\\nonumber&\ll \pi(x) \bigg(\frac{1}{M}+M^2\log M\Big(x^{- \frac{1}{32\Cr{43}M^2}} + \exp\Big(-\frac{\Cr{461}\log x}{4M^2\log(N_{E}{N_{E'}}M)}\Big)+\exp\Big(-\frac{\sqrt{\Cr{461}\log x}}{2M}\Big)\Big)\bigg)
    \\\nonumber&\ll \pi(x)\frac{\log(N_{E}N_{E'}\log x)}{\sqrt{\log x}}.
\end{align}
We now bound the second term on the right side of (\ref{cmnoncmerror}). Note that
\begin{align*}
    \sum_{\substack{p\le x\\ p \text{ splits}}} \sum_{m=1}^{M}\frac{\cos(m\theta_{E}(p))+U_m(\cos\theta_{E'}(p))}{m} &= \sum_{\substack{p\le x}} \bigg(\frac{1 + \chi_K(p)}{2}\bigg)\sum_{m=1}^{M}\frac{\cos(m\theta_{E}(p))+U_m(\cos\theta_{E'}(p))}{m}.
\end{align*}
Now, by Lemma \ref{nonCMPNT} and partial summation, we have that
\begin{align*}
    \sum_{\substack{p\le x}} \bigg(\frac{1 + \chi_K(p)}{2}\bigg)U_m(\cos\theta_{E'}(p)) &\ll \frac{m^2x}{\log x} \Big(\exp\Big(-\frac{\Cr{42}\log x}{2m^2\log(N_Em)}\Big)+\exp\Big(-\frac{\Cr{42}\sqrt{\log x}}{2\sqrt{m}} \Big) \Big).
\end{align*}
Note that since the possibilities for $\chi_K$ are finite, we may omit both the contribution from $q$ and the possible Siegel zero. Moreover, by using (\ref{CMbound}) and applying partial summation, we may obtain
\begin{align*}
    \sum_{\substack{p\le x}} \bigg(\frac{1 + \chi_K(p)}{2}\bigg)\cos(m\theta_{E}(p)) &\ll \frac{x}{\log x} \exp \Big ( \frac{-\Cr{44} \log x}{\sqrt{\log x} + 3\log(N_Em)} \Big )\log(xN_Em)^4.
\end{align*}
Note that the contribution from the prime powers to (\ref{CMbound}) can be absorbed into the other error terms (by the same reasoning as in the proof to Lemma \ref{coslogp}), and hence has been neglected above. Now, collating the above bounds and substituting in the value of $M$, we have that
\begin{align}\label{section62}
    &\sum_{\substack{p\le x\\ p \text{ splits}}} \sum_{m=1}^{M}\frac{\cos(m\theta_{E}(p))+U_m(\cos\theta_{E'}(p))}{m} \\
    \nonumber &\ll \sum_{m=1}^M \frac{mx}{\log x} \Big( \Big(\exp\Big(-\frac{\Cr{42}\log x}{2m^2\log(N_Em)}\Big)+\exp\Big(-\frac{\Cr{42}\sqrt{\log x}}{2\sqrt{m}} \Big)\Big) \Big) 
    \\ \nonumber&+ \frac{x}{m\log x} \exp \Big ( \frac{-\Cr{44} \log x}{\sqrt{\log x} + 3\log(N_Em)} \Big )\log(xN_Em)^4 \\
    \nonumber &\ll \frac{M^2x}{\log x} \Big( \Big(\exp\Big(-\frac{\Cr{42}\log x}{2M^2\log(N_EM)}\Big)+\exp\Big(-\frac{\Cr{42}\sqrt{\log x}}{2\sqrt{M}} \Big)\Big) \Big) 
    \\ \nonumber &+ \frac{x\log M}{\log x} \exp \Big ( \frac{-\Cr{44} \log x}{\sqrt{\log x} + 3\log(N_EM)} \Big )\log(xN_EM)^4 \\
    \nonumber &\ll \pi(x)\frac{\log(N_{E})^4\log(N_{E'}\log x)}{\sqrt{\log x}}.
\end{align}

We now estimate the contribution from inert primes to (\ref{section60}). However, note that if $D_K$ is the conductor of $\chi_K$, then exactly half of the residues in $(\mathbb{Z}/D_K\mathbb{Z})^\times$ correspond to inert primes. Hence, by Theorem \ref{master} (2.a) we have \begin{align}\label{cmnoncmerror2}
    \one_{\frac{\pi}{2}\in I}\sum_{\substack{p\le x\\ p \text{ inert}}}\one_{I'}(\theta_{E'}(p))&= \one_{\frac{\pi}{2}\in I} \Bigg( \frac{1}{2}\mu_{\text{ST}}(I')\pi(x)+O\bigg(\pi(x)\frac{\log(N_{E'}\log x)}{\sqrt{\log x}}\bigg) \Bigg).
\end{align} Combining (\ref{cmnoncmerror1}), (\ref{section62}), and (\ref{cmnoncmerror2}) now concludes the proof. \end{proof}

\section{Proof of Theorem \ref{master} (1.c)}\label{1c}

Throughout this section, we will fix the notation from Sections \ref{sintro}, \ref{sinfo}, and \ref{sCMCMzerofree}. In particular, we will consider two elliptic curves $E, E'$ having complex multiplication over two imaginary quadratic fields $K, K'$ (respectively). 

\begin{proof}[Proof of Theorem \ref{master} (1.c)]

The proof is trivial when $x < N_EN_{E'}$; hence assume $x \geq N_EN_{E'}$. First, recall that by Lemmas \ref{coslogp} and \ref{coscoslogp} and applying partial summation, we have that for all positive integers $m, m'$, \begin{align}\label{sumcos}
    \sum_{\substack{p \leq x \\ p \text{ splits in }K}} \cos(m\theta_p) \ll \bigg ( \frac{x}{\log x} \bigg )\exp \bigg ( \frac{-\Cr{44}\log x}{\sqrt{\log x} + 3\log(N_Em)} \bigg )\log (x N_E m)^4
\end{align} and \begin{align}\label{sumcoscos}
    \sum_{\substack{p \leq x \\ p \text{ splits in }K}}\cos(m\theta_{E}(p))\cos(m'\theta_{E'}(p)) \ll \bigg(\frac{x}{\log x}\bigg )\exp\bigg(\frac{-\Cr{471}\log x}{\sqrt{\log x}+3\log \mathfrak{q}}\bigg)(\log x\mathfrak{q})^4\bigg ).
\end{align}
Now we consider \begin{align}
    \nonumber \pi_{I,I'}(x) & = \sum_{p\le x}\mathbf{1}_{I}(\theta_{E}(p))\mathbf{1}_{I'}(\theta_{E'}(p))
    \\ \label{4terms} &= \sum_{\substack{p \leq x \\ p \text{ splits in } K \\ p \text{ splits in } K'}}\mathbf{1}_{I}(\theta_{E}(p))\mathbf{1}_{I'}(\theta_{E'}(p))+\mathbf{1}_{\frac{\pi}{2}\in I'}\sum_{\substack{p \leq x \\ p \text{ splits in } K\\ p \text{ inert in } K'}}\mathbf{1}_{I}(\theta_{E}(p))
    \\& \nonumber +\mathbf{1}_{\frac{\pi}{2}\in I}\sum_{\substack{p \leq x \\ p \text{ inert in } K\\ p \text{ splits in } K'}}\mathbf{1}_{I'}(\theta_{E'}(p))+\mathbf{1}_{\frac{\pi}{2}\in I}\mathbf{1}_{\frac{\pi}{2}\in I'}\sum_{\substack{p \leq x \\ p \text{ inert in } K\\ p \text{ inert in } K'}}1 + O(N_EN_{E'}).
\end{align}
Here $O(N_EN_{E'})$ comes from the contribution of the omitted primes $p$ which divide $N_{E}N_{E'}$. Now, (\ref{4terms}) splits $\pi_{I,I'}(x)$ into 4 terms. We now simplify the first $3$ terms.

\textbf{Term 1.} By Lemma \ref{bspoly2}, we have \begin{align} \label{term1}
    \sum_{\substack{p \leq x \\ p \text{ splits in } K\\ p \text{ splits in } K'}}\mathbf{1}_{I}(\theta_{E}(p))\mathbf{1}_{I'}(\theta_{E'}(p))
\end{align} is equal to \begin{align*}
    \frac{|I||I'|}{\pi^2}\sum_{\substack{p \leq x \\ p \text{ splits in } K\\ p \text{ splits in } K'}}1&+O\bigg(\frac{\pi(x)}{M}+\sum_{m=1}^{M}\frac{1}{m}\bigg(\cos(m\theta_{E}(p))+\cos(m'\theta_{E'}(p))\bigg) \\\nonumber&+\sum_{1\le m, m'\le M}\frac{1}{mm'}\cos(m\theta_{E}(p))\cos(m'\theta_{E'}(p))\bigg).
\end{align*}
By (\ref{sumcos}) and (\ref{sumcoscos}), the error term above is bounded by \begin{align} \label{1248}
    &O\bigg(\frac{x}{\log x} \bigg(\frac{1}{M}+ \log M \exp \bigg ( \frac{-\Cr{44}\log x}{\sqrt{\log x} + 3\log N_{E}N_{E'}M} \bigg )\log (x N_{E}N_{E'} M)^4 \\\nonumber&+(\log M)^2\exp\bigg(\frac{-\Cr{471}\log x}{\sqrt{\log x}+3\log N_{E}N_{E'}M}\bigg)\log (xN_{E}N_{E'}M)^4\bigg)\bigg).
\end{align} Hence, there exists an absolute constant $\Cl[abcon]{473}$ such that the term in (\ref{1248}) is bounded by \begin{align*}
    O\bigg(\frac{x}{\log x} \bigg( \frac{1}{M}+(\log M)^2\exp\bigg(\frac{-\Cr{473}\log x}{\sqrt{\log x}+3\log N_{E}N_{E'}M}\bigg)\log (xN_{E}N_{E'}M)^4 \bigg) \bigg).
\end{align*}
Taking $M=\exp(\sqrt{\log x})$, we obtain that there exists an absolute constant $\Cl[abcon]{474}$ such that (\ref{term1}) is equal to \begin{align*}
    \frac{|I||I'|}{\pi^2}\bigg(\sum_{\substack{p \leq x \\ p \text{ splits in } K\\ p \text{ splits in } K'}}1\bigg)+O\bigg(\frac{x}{\log x} \bigg(\exp\bigg(\frac{-\Cr{474}\log x}{\sqrt{\log x}+3\log N_{E}N_{E'}} \bigg)\bigg)\bigg).
\end{align*}

\textbf{Term 2.} When $\frac{\pi}{2}$ is not in $I'$ or $K \sim K'$, the second term is automatically 0. Hence, assume $\frac{\pi}{2}\in I'$ and $K\not \sim K'$. In this case, \begin{align*}
    \sum_{\substack{p \leq x \\ p \text{ splits in } K\\ p \text{ inert in } K'}}\mathbf{1}_{I}(\theta_{E}(p)) & = \sum_{\substack{p \leq x \\ p \text{ splits in } K}}\frac{1-\chi_{K'}(p)}{2}\mathbf{1}_{I}(\theta_{E}(p)).
\end{align*} 
By the proof of Theorem \ref{master} (2.b), we have that there exists an absolute constant $\Cl[abcon]{475}$ for which the above equals \begin{align*}
    \frac{|I|}{2\pi}\bigg(\sum_{\substack{p \leq x \\ p \text{ splits in } K}}1\bigg)+O\bigg(\frac{x}{\log x}\exp \bigg ( \frac{-\Cr{475}\log x}{\sqrt{\log x} + \log N_E} \bigg ) \bigg).
\end{align*} Here the Siegel zero contribution is omitted as there are only finitely many possible moduli for $\chi_K$.

Now, term 3 can be dealt with in the same way as term 2. Term 4 can be estimated using \eqref{arithprogestimate}. Collating the above results, we have that when $K\not\sim K'$, there exists an absolute constant $\Cl[abcon]{476}$ such that \begin{align*}
    \pi_{I, I'}(x)&=\frac{|I||I'|}{\pi^2}\bigg(\sum_{\substack{p \leq x \\ p \text{ splits in } K\\ p \text{ splits in } K'}}1\bigg)+\frac{|I|\mathbf{1}_{\frac{\pi}{2}\in I'}}{2\pi}\bigg(\sum_{\substack{p \leq x \\ p \text{ splits in } K}}1\bigg) \\& + \frac{|I'|\mathbf{1}_{\frac{\pi}{2}\in I}}{2\pi}\bigg(\sum_{\substack{p \leq x \\ p \text{ splits in } K'}}1\bigg)+\mathbf{1}_{\frac{\pi}{2}\in I}\mathbf{1}_{\frac{\pi}{2}\in I'}\bigg(\sum_{\substack{p \leq x \\ p \text{ inert in } K\\ p \text{ inert in }K'}}1\bigg)\\ & +O\bigg(x(\log x)^4\exp\bigg(\frac{-\Cr{474}\log x}{\sqrt{\log x}+\log N_{E}N_{E'}}\bigg)\bigg)\\& = \mu_{\text{ST}}^1(I)\mu_{\text{ST}}^2(I')\pi(x)+O\bigg(x(\log x)^4\exp\bigg(\frac{-\Cr{476}\log x}{\sqrt{\log x}+\log N_{E}N_{E'}}\bigg)\bigg).
\end{align*} 
When $K\sim K'$, there exists an absolute constant $\Cl[abcon]{477}$ such that \begin{align*}
    \pi_{I, I'}(x)&=\frac{|I||I'|}{\pi^2}\bigg(\sum_{\substack{p \leq x \\ p \text{ splits in } K\sim K'}}1\bigg)+\mathbf{1}_{\frac{\pi}{2}\in I}\mathbf{1}_{\frac{\pi}{2}\in I'}\bigg(\sum_{\substack{p \leq x \\ p \text{ inert in } K\sim K'}}1\bigg)\\ & +O\bigg(x(\log x)^4\exp\bigg(\frac{-\Cr{474}\log x}{\sqrt{\log x}+\log N_{E}N_{E'}}\bigg)\bigg)\\& = \frac{1}{2}\bigg(\frac{|I||I'|}{\pi^2}+\mathbf{1}_{\frac{\pi}{2}\in I}\mathbf{1}_{\frac{\pi}{2}\in I'}\bigg)\pi(x)+O\bigg(x(\log x)^4\exp\bigg(\frac{-\Cr{477}\log x}{\sqrt{\log x}+\log N_{E}N_{E'}}\bigg)\bigg).
\end{align*}
Hence both the desired bounds have been proved.
\end{proof}

\newpage

\appendix

\begin{center}
\section{The Sato--Tate distributions of Double Quadric and K3 Surfaces}\label{figure}
\end{center}
\bigskip

\begin{figure}[h]
    \captionsetup{justification=centering}
    {\centering
    \includegraphics[scale=0.5]{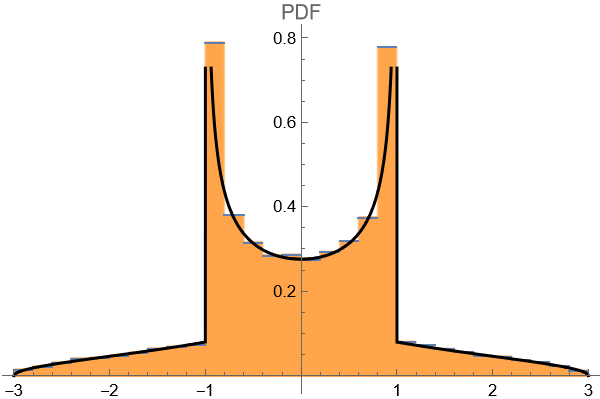}}
    
    \caption{Graph of $a_{X_\lambda}^\ast$, $\lambda = 2, p \leq 5\times10^5$. We overlay this with the ``Batman'' distribution.}
    \label{batman}
\end{figure}

\begin{figure}[h]
    \captionsetup{justification=centering}
    {\centering
    \includegraphics[scale=0.5]{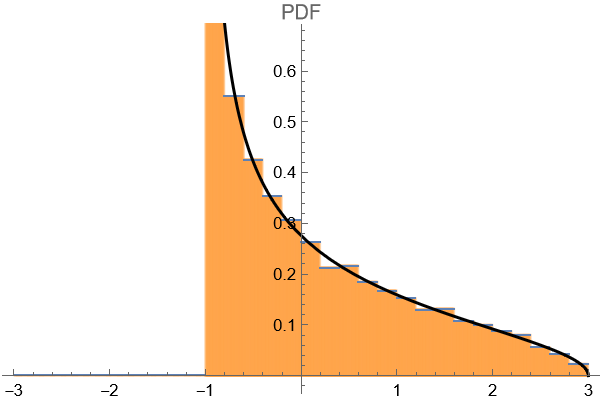}}
    
    \caption{Graph of $a_{X_\lambda}^\ast$, $\lambda = 3, p \leq 2\times10^5$. We overlay this with the graph of $\frac{1}{2\pi}\sqrt{\frac{3-x}{1+x}}$.}
    \label{halfbatman}
\end{figure}

\begin{figure}[h]
    \captionsetup{justification=centering}
    {\centering
    \includegraphics[scale=0.5]{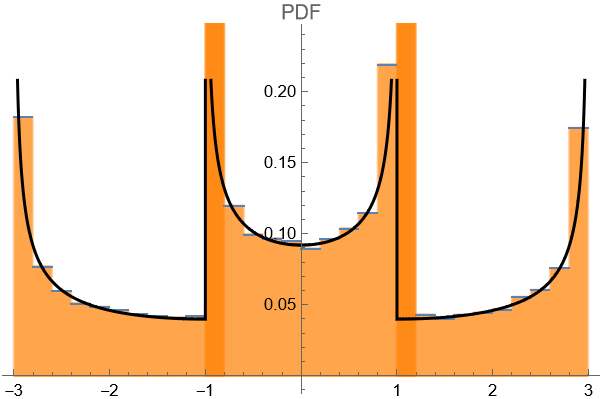}}
    
    \caption{Graph of $a_{X_\lambda}^\ast$, $\lambda = 1, p \leq 5\times10^5$. We overlay this with the ``flying Batman'' distribution.}
    \label{flyingbatman}
\end{figure}

\newpage

\begin{figure}[h]
    \captionsetup{justification=centering}
    {\centering
    \includegraphics[scale=0.5]{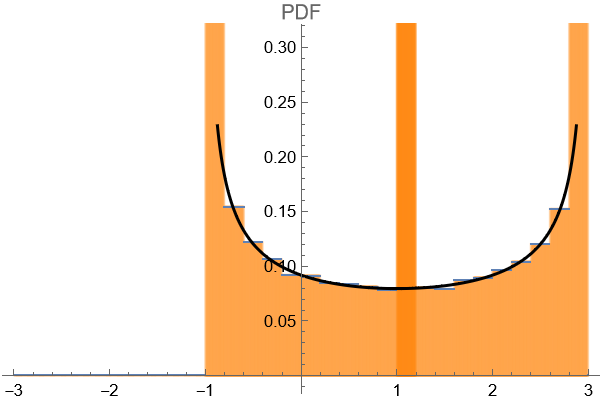}}
    
    \caption{Graph of $a_{X_\lambda}^\ast$, $\lambda = -4, p \leq 5\times10^5$. We overlay this with the graph of $\frac{1}{2\pi \sqrt{3 + 2x - x^2}}$.}
    \label{lambdanegative4}
\end{figure}

\begin{figure}[h]
    \captionsetup{justification=centering}
    {\centering
    \includegraphics[scale=0.5]{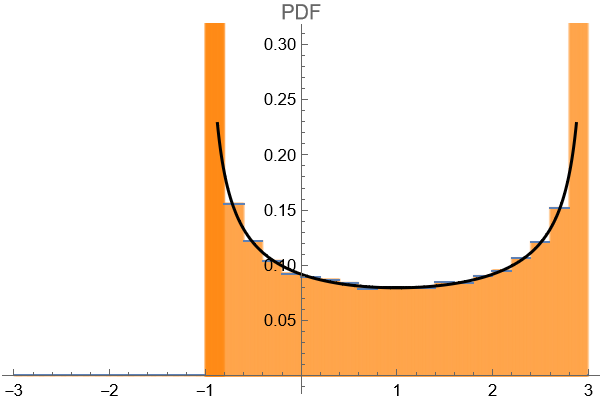}}
    
    \caption{Graph of $a_{X_\lambda}^\ast$, $\lambda = 8, p \leq 5\times10^5$. We overlay this with the graph of $\frac{1}{2\pi \sqrt{3 + 2x - x^2}}$.}
    \label{lambda8}
\end{figure}

\begin{figure}[h]
    \captionsetup{justification=centering}
    {\centering
    \includegraphics[scale=0.5]{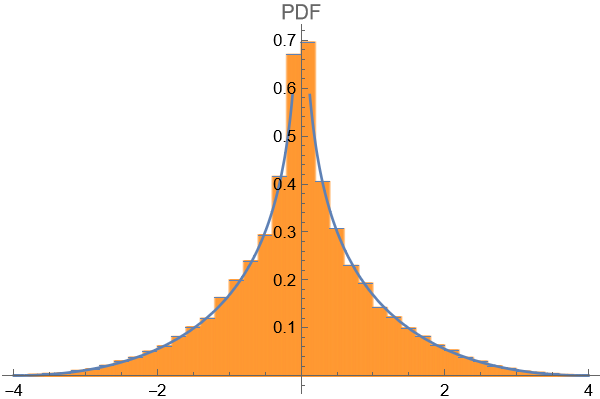}}
    
    \caption{Graph of $a_{\mathcal{Z}}^\ast$, where the two elliptic curves are $x_1^3+7x_1z_1^2+13z_1^3$ and $x_1^3+7x_1z_1^2+17z_1^3$, and $p \leq 5\times10^5$. We overlay this with the graph of $C_1$.}
    \label{cone}
\end{figure}

\newpage

\begin{figure}[h]
    \captionsetup{justification=centering}
    {\centering
    \includegraphics[scale=0.5]{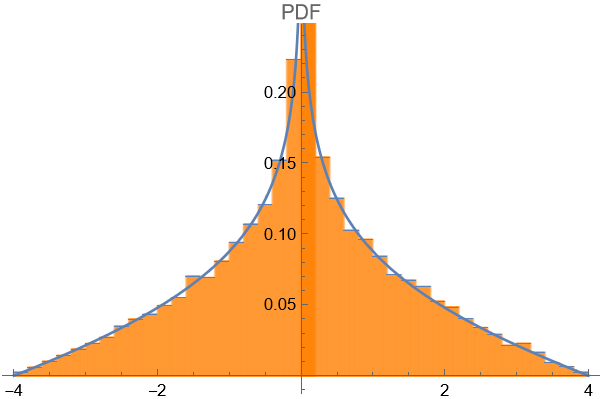}}
    
    \caption{Graph of $a_{\mathcal{Z}}^\ast$, where the two elliptic curves are $x_1^3+7x_1z_1^2+13z_1^3$ and $x_1^3-99x_1z_1^2+378z_1^3$, and $p \leq 5\times10^5$. We overlay this with the graph of $C_2$.}
    \label{ctwo}
\end{figure}

\begin{figure}[h]
    \captionsetup{justification=centering}
    {\centering
    \includegraphics[scale=0.5]{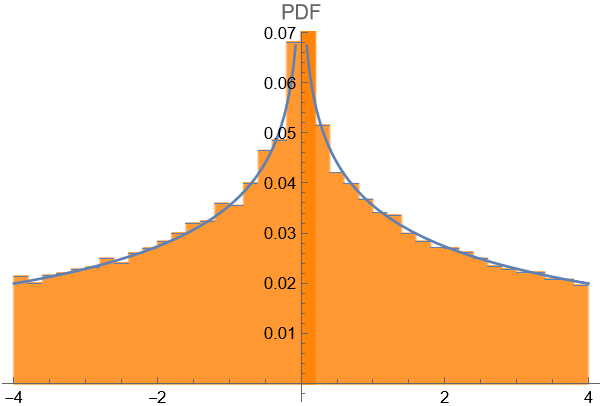}}
    
    \caption{Graph of $a_{\mathcal{Z}}^\ast$, where the two elliptic curves are $x_1^3-11x_1z_1^2-14z_1^3$ and $x_1^3-120x_1z_1^2+506z_1^3$, and $p \leq 2\times10^6$. We overlay this with the graph of $C_3$.}
    \label{cthree}
\end{figure}

\newpage

\newcommand{\etalchar}[1]{$^{#1}$}

%\bibliographystyle{alpha}
%\bibliography{main}

\newcommand{\etalchar}[1]{$^{#1}$}
\begin{thebibliography}{BLGHT11}

\bibitem[AOP02]{ahlgren2002zeta}
Scott Ahlgren, Ken Ono, and David Penniston.
\newblock Zeta functions of an infinite family of {$K3$} surfaces.
\newblock {\em Amer. J. Math.}, 124(2):353--368, 2002.

\bibitem[BCDT01]{breuil2001modularity}
Christophe Breuil, Brian Conrad, Fred Diamond, and Richard Taylor.
\newblock On the modularity of elliptic curves over {$\mathbf{Q}$}: wild 3-adic exercises.
\newblock {\em J. Amer. Math. Soc.}, 14(4):843--939, 2001.

\bibitem[BH97]{bushnell1997upper}
C.~J. Bushnell and G.~Henniart.
\newblock An upper bound on conductors for pairs.
\newblock {\em J. Number Theory}, 65(2):183--196, 1997.

\bibitem[BLGHT11]{barnet2011family}
Tom Barnet-Lamb, David Geraghty, Michael Harris, and Richard Taylor.
\newblock A family of {C}alabi-{Y}au varieties and potential automorphy {II}.
\newblock {\em Publ. Res. Inst. Math. Sci.}, 47(1):29--98, 2011.

\bibitem[CM04]{cogdell2004complex}
J.~Cogdell and P.~Michel.
\newblock On the complex moments of symmetric power {$L$}-functions at {$s=1$}.
\newblock {\em Int. Math. Res. Not.}, (31):1561--1617, 2004.

\bibitem[Coc88]{cochrane1988trigonometric}
Todd Cochrane.
\newblock Trigonometric approximation and uniform distribution modulo one.
\newblock {\em Proc. Amer. Math. Soc.}, 103(3):695--702, 1988.

\bibitem[DGM{\etalchar{+}}20]{chantaldavid}
C.~David, A.~Gafni, A.~Malik, N.~Prabhu, and C.~L. Turnage-Butterbaugh.
\newblock Extremal primes for elliptic curves without complex multiplication.
\newblock {\em Proc. Amer. Math. Soc.}, 148(3):929--943, 2020.

\bibitem[Gre84]{greene1984hypergeometric}
John~Robert Greene.
\newblock {\em C{haracter} {sum} {analogues} {for} {hypergeometric} {and} {generalized} {hypergeometric} {functions} {over} {finite} {fields}}.
\newblock ProQuest LLC, Ann Arbor, MI, 1984.
\newblock Thesis (Ph.D.)--University of Minnesota.

\bibitem[Gre87]{greene1987hypergeometric2}
John Greene.
\newblock Hypergeometric functions over finite fields.
\newblock {\em Trans. Amer. Math. Soc.}, 301(1):77--101, 1987.

\bibitem[HB19]{humphries2019standard}
Peter Humphries and Farrell Brumley.
\newblock Standard zero-free regions for {R}ankin-{S}elberg {$L$}-functions via sieve theory.
\newblock {\em Math. Z.}, 292(3-4):1105--1122, 2019.

\bibitem[HR95]{hoffstein1995siegel}
Jeffrey Hoffstein and Dinakar Ramakrishnan.
\newblock Siegel zeros and cusp forms.
\newblock {\em Internat. Math. Res. Notices}, (6):279--308, 1995.

\bibitem[IK04]{iwaniec2021analytic}
Henryk Iwaniec and Emmanuel Kowalski.
\newblock {\em Analytic number theory}, volume~53 of {\em American Mathematical Society Colloquium Publications}.
\newblock American Mathematical Society, Providence, RI, 2004.

\bibitem[Iwa97]{iwaniec1997topics}
Henryk Iwaniec.
\newblock {\em Topics in classical automorphic forms}, volume~17 of {\em Graduate Studies in Mathematics}.
\newblock American Mathematical Society, Providence, RI, 1997.

\bibitem[Lan79]{langlands1977isobaric}
R.~P. Langlands.
\newblock Automorphic representations, {S}himura varieties, and motives. {E}in {M}\"{a}rchen.
\newblock In {\em Automorphic forms, representations and {$L$}-functions ({P}roc. {S}ympos. {P}ure {M}ath., {O}regon {S}tate {U}niv., {C}orvallis, {O}re., 1977), {P}art 2}, Proc. Sympos. Pure Math., XXXIII, pages 205--246. Amer. Math. Soc., Providence, R.I., 1979.

\bibitem[Mon94]{montgomery_ten_lectures}
Hugh~L. Montgomery.
\newblock {\em Ten lectures on the interface between analytic number theory and harmonic analysis}, volume~84 of {\em CBMS Regional Conference Series in Mathematics}.
\newblock Published for the Conference Board of the Mathematical Sciences, Washington, DC; by the American Mathematical Society, Providence, RI, 1994.

\bibitem[MS85]{moreno1985functions}
C.~J. Moreno and F.~Shahidi.
\newblock The l-functions l(s, symm(r), $\pi$).
\newblock {\em Canadian Mathematical Bulletin}, 28(4):405–410, 1985.

\bibitem[MV07]{montgomery2007multiplicative}
Hugh~L. Montgomery and Robert~C. Vaughan.
\newblock {\em Multiplicative number theory. {I}. {C}lassical theory}, volume~97 of {\em Cambridge Studies in Advanced Mathematics}.
\newblock Cambridge University Press, Cambridge, 2007.

\bibitem[NT21a]{newton2021symmetric}
James Newton and Jack~A. Thorne.
\newblock Symmetric power functoriality for holomorphic modular forms.
\newblock {\em Publ. Math. Inst. Hautes \'{E}tudes Sci.}, 134:1--116, 2021.

\bibitem[NT21b]{thorne2021symmetric}
James Newton and Jack~A. Thorne.
\newblock Symmetric power functoriality for holomorphic modular forms, {II}.
\newblock {\em Publ. Math. Inst. Hautes \'{E}tudes Sci.}, 134:117--152, 2021.

\bibitem[Ono04]{ono2004web}
Ken Ono.
\newblock {\em The web of modularity: arithmetic of the coefficients of modular forms and {$q$}-series}, volume 102 of {\em CBMS Regional Conference Series in Mathematics}.
\newblock Published for the Conference Board of the Mathematical Sciences, Washington, DC; by the American Mathematical Society, Providence, RI, 2004.

\bibitem[OSS23]{ono2021distribution}
Ken Ono, Hasan Saad, and Neelam Saikia.
\newblock Distribution of values of {G}aussian hypergeometric functions.
\newblock {\em Pure Appl. Math. Q.}, 19(1):371--407, 2023.

\bibitem[RT17]{rouse2017explicit}
Jeremy Rouse and Jesse Thorner.
\newblock The explicit {S}ato-{T}ate conjecture and densities pertaining to {L}ehmer-type questions.
\newblock {\em Trans. Amer. Math. Soc.}, 369(5):3575--3604, 2017.

\bibitem[RY21]{ramakrishnan2021conductor}
Dinakar Ramakrishnan and Liyang Yang.
\newblock A constraint for twist equivalence of cusp forms on {${\rm GL}(n)$}.
\newblock {\em Funct. Approx. Comment. Math.}, 65(1):105--117, 2021.

\bibitem[Saa22]{saad2023distribution}
Hasan Saad.
\newblock Explicit sato-tate type distribution for a family of $k3$ surfaces.
\newblock 2022.

\bibitem[Sil94]{silverman1994advanced}
Joseph~H. Silverman.
\newblock {\em Advanced topics in the arithmetic of elliptic curves}, volume 151 of {\em Graduate Texts in Mathematics}.
\newblock Springer-Verlag, New York, 1994.

\bibitem[ST19]{soundararajan2019weak}
Kannan Soundararajan and Jesse Thorner.
\newblock Weak subconvexity without a {R}amanujan hypothesis.
\newblock {\em Duke Math. J.}, 168(7):1231--1268, 2019.
\newblock With an appendix by Farrell Brumley.

\bibitem[Sut19]{sutherland2013sato}
Andrew~V. Sutherland.
\newblock Sato-{T}ate distributions.
\newblock 740:197--248, [2019] \copyright 2019.

\bibitem[Tho21]{thorner2021effective}
Jesse Thorner.
\newblock Effective forms of the {S}ato-{T}ate conjecture.
\newblock {\em Res. Math. Sci.}, 8(1):Paper No. 4, 21, 2021.

\end{thebibliography}

\end{document}